\NeedsTeXFormat{LaTeX2e}

\documentclass[compmod]{rsl}

\gridframe{N}

\usepackage{proof}
\usepackage{amsmath,amssymb,latexsym,natbib}
\usepackage{rotating}

\volume{0}
\issue{0}

\pyear{2020}
\pmonth{---}
\doinu{10.1017/S1755020300000000}
\setcounter{page}{1}

\title[Lambek Calculus with Kleene Star]{Complexity of the Infinitary Lambek Calculus with Kleene Star}
\author[S. Kuznetsov]{STEPAN KUZNETSOV}
\affil{Steklov Mathematical Institute of the RAS}

\leftrunninghead{stepan kuznetsov}
\rightrunninghead{Lambek Calculus with Kleene Star}

\newcommand{\BS}{\mathop{\backslash}}
\newcommand{\SL}{\mathop{/}}
\newcommand{\KStar}{{}^*}

\newcommand{\LL}{\mathbf{L}\!^{\Lambda}}

\newcommand{\Lpomega}{\mathbf{L}_\omega^{\!+}}

\newcommand{\Lomega}{\mathbf{L}_\omega^{\!*}}

\newcommand{\One}{\mathbf{1}}

\newcommand{\nrm}[1]{[\![#1]\!]}

\newcommand{\Uc}{\mathcal{U}}
\newcommand{\is}{\mathrm{is}}

\newcommand{\Nc}{\mathcal{N}}
\newcommand{\Gc}{\mathcal{G}}

\newcommand{\Var}{\mathrm{Var}}
\newcommand{\FG}{\mathbb{FG}}

\newcommand{\CUT}{\mathrm{cut}}

\newcommand{\Af}{\mathfrak{A}}

\newcommand{\LU}{\mathbf{L}_{\One}}

\newcommand{\ACTomega}{\mathbf{ACT}_\omega}

\newcommand{\ThM}{\mathrm{Th}_{\BS,\SL,\cdot,\KStar}}
\newcommand{\ThA}{\mathrm{Th}_{\BS,\SL,\cdot,\KStar,\vee,\wedge}}
\newcommand{\LMod}{\mathrm{L\mbox{-}Mod}}
\newcommand{\RMod}{\mathrm{R\mbox{-}Mod}}
\newcommand{\REGLANMod}{\mathrm{REGLAN\mbox{-}Mod}}

\newcommand{\Inst}{\mathrm{Inst}}

\begin{document}

\maketitle

\begin{abstract}
We consider the Lambek calculus, or non-commutative multiplicative intuitionistic linear logic,
 extended with iteration, or Kleene star, axiomatised by means of an $\omega$-rule, and prove that the derivability problem in this calculus
is $\Pi_1^0$-hard. This solves a problem left open by Buszkowski (2007), who obtained the same complexity bound for infinitary action logic, 
which additionally includes additive conjunction and disjunction. 
As a by-product, we prove that any context-free language without the empty word can be generated by a Lambek grammar
with unique type assignment, without Lambek's non-emptiness restriction imposed (cf. Safiullin 2007). 

{\bf Keywords:} Kleene star, Lambek calculus, non-commutative linear logic, infinitary action logic, complexity, categorial grammars.

{\bf 2010 AMS MSC:} 03F52.
\end{abstract}

\section{Introduction}\label{S:intro}
%
% Верещатником Бог замещает на вырубках лес;
% Где срубили сосну, можжевельник растёт до небес;
% Ржа старинных мечей проступает в трясинах болот -
% Не погибла страна, пока жив её малый народ.
%
%
Residuated structures play an important r\^{o}le in abstract algebra and substructural 
logic~\citep{Krull1924,WardDilworth1939,Ono1993,JipsenSurvey,GalatosRLbook,AbramskyTzevelekos2010}.
We introduce the Lambek calculus with the unit~\citep{Lambek1969}
as the algebraic logic (inequational theory) of {\em residuated monoids}.
A residuated monoid is a partially ordered algebraic structure $\langle \Af; \preceq, \cdot, \One, \BS, \SL \rangle$, where:
\begin{itemize}
\item $\langle \Af; \cdot, \One \rangle$ is a monoid;
\item $\preceq$ is a preorder;
\item $\BS$ and $\SL$ are {\em residuals} of $\cdot$ w.r.t. $\preceq$, i.e.
$$A \preceq C \SL B \iff A \cdot B \preceq C \iff B \preceq A \BS C.$$
\end{itemize}

Notice that in the presence of residuals we do not need to postulate monotonicity of $\cdot$ w.r.t. $\preceq$ explicitly:
it follows from the rules for $\BS$ and $\SL$~\citep{Lambek1958}.

The Lambek calculus with the unit, $\LU$,  axiomatises the set of atomic sentences of the form $A \preceq B$ (where $A$ and $B$ are formulae constructed from variables
and constant $\One$ using three binary operations: $\cdot$, $\BS$, $\SL$) which are generally true in residuated monoids.
 
 We formulate $\LU$ in the form of a Gentzen-style sequent calculus. 
 Formulae of $\LU$ are built from a countable set of variables
$\Var = \{ p_1, p_2, p_3, \ldots \}$ and constant $\One$ using three binary connectives: $\BS$, $\SL$, $\cdot$\,. {\em Sequents} are expressions of the
form $\Pi \to A$, where $A$ (succedent) is a formula and $\Pi$ (antecedent) is a finite sequence of formulae. 
The antecedent $\Pi$ is allowed to be empty; the empty sequence of formulae is denoted by $\Lambda$.

Sequent $B_1, \ldots, B_n \to A$ are interpreted as $B_1 \cdot \ldots \cdot B_n \preceq A$; sequent $\Lambda \to A$ means $\One \preceq A$.

Axioms of $\LU$ are sequents of the form $A \to A$, and $\Lambda \to \One$. Rules of inference are as follows:
\clearpage
$$
\infer[(\BS\to)]{\Gamma, \Pi, A \BS B, \Delta \to C}{\Pi \to A & \Gamma, B, \Delta \to C}
\qquad
\infer[(\to\BS)]{\Pi \to A \BS B}{A, \Pi \to B}
$$
$$
\infer[(\SL\to)]{\Gamma,  B \SL A, \Pi, \Delta \to C}{\Pi \to A & \Gamma, B, \Delta \to C}
\qquad
\infer[(\to\SL)]{\Pi \to B \SL A}{\Pi, A \to B}
$$
$$
\infer[(\cdot\to)]{\Gamma, A \cdot B, \Delta \to C}{\Gamma, A, B, \Delta \to C}
\qquad
\infer[(\to\cdot)]{\Gamma, \Delta \to A \cdot B}{\Gamma \to A & \Delta \to B}
\qquad
\infer[(\One\to)]{\Gamma, \One, \Delta \to C}{\Gamma, \Delta \to C}
$$
$$
\infer[(\mathrm{cut})]{\Gamma, \Pi, \Delta \to C}{\Pi \to A & \Gamma, A, \Delta \to C}
$$

Completeness is proved by standard Lindenbaum -- Tarski construction.

One of the natural examples of residuated monoids is the algebra $\mathcal{P}(\Sigma^*)$ of formal languages  over
an alphabet $\Sigma$. The preorder on $\mathcal{P}(\Sigma^*)$ is the subset relation; multiplication is pairwise
concatenation:
$$
A \cdot B = \{ uv \mid u \in A, v \in B \}
$$ 
and divisions are defined as follows:
\begin{align*}
A \BS B &= \{ u \in \Sigma^* \mid (\forall v \in A)\, vu \in B \};\\
B \SL A &= \{ u \in \Sigma^* \mid (\forall v \in A)\, uv \in B \}.
\end{align*}

This interpretation of the Lambek calculus on formal languages corresponds to the original idea of~\citet{Lambek1958} to
use the Lambek calculus as a basis for {\em categorial grammars.} The concept of categorial grammars goes back to~\citet{Ajdukiewicz}
and~\citet{BarHillel}. Nowadays categorial grammars based on the Lambek calculus and its extensions are used to describe
fragments of natural language in the type-logical linguistic paradigm. In this article we need Lambek categorial
grammars only as a technical gadget for our complexity proofs. Thus, in~\ref{S:grammar} we give only formal definitions and
formulate the results we need; for in-depth discussion of linguistic applications we redirect the reader to~\citet{Carpenter}, \citet{Morrill2011},
and~\citet{MootRetore}.

The notion of residuated monoid can be extended by additional algebraic operations. These include meet ($\wedge$) and
join ($\vee$), which impose a lattice structure on the given preorder: 
$a \wedge b = \inf_{\preceq} \{ a, b \}$, $a \vee b = \sup_{\preceq} \{ a, b \}$, and we postulate that these suprema and infima exist for any $a,b$. A residuated
monoid with meets and joins is a {\em residuated lattice}. On the algebra of formal languages,
meet and join correspond to set-theoretic intersection and union respectively.

On the logical side, meet and join are called, respectively, additive conjunction and disjunction. This terminology follows
Girard's linear logic~\citep{Girard1987}. The monoidal product, $\cdot$, is multiplicative conjunction, and two divisions,
$\BS$ and $\SL$, are left and right linear implications. 
The rules for $\wedge$ and $\vee$ are as follows:
$$
\infer[(\wedge\to),\ i=1,2]{\Gamma, A_1 \wedge A_2, \Delta \to C}{\Gamma, A_i, \Delta \to C}
\qquad
\infer[(\to\wedge)]{\Pi \to A_1 \wedge A_2}{\Pi \to A_1 & \Pi \to A_2}
$$
$$
\infer[(\vee\to)]{\Gamma, A_1 \vee A_2, \Delta \to C}{\Gamma, A_1, \Delta \to C & \Gamma, A_2, \Delta \to C}
\qquad
\infer[(\to\vee),\ i=1,2]{\Pi \to A_1 \vee A_2}{\Pi \to A_i}
$$

The system presented above is substructural, that is, it lacks usual logical principles of weakening, contraction, and permutation.
For this reason, logical connectives split, and we have to consider two implications (left and right) and two versions of conjunction
(multiplicative and additive). For more discussion of substructurality we refer to~\citet{Restall2000}.

Another, more sophisticated operation to be added to residuated monoids is {\em iteration,} or Kleene star, first introduced by~\citet{Kleene1956}. 
Residuated lattices extended with Kleene star are called residuated Kleene lattices (RKLs), or
action lattices~\citep{Pratt1991,Kozen1994}. Throughout this article, we consider only \mbox{\em *-continuous} RKLs, in which Kleene star is defined
as follows:
$$
a^* = \sup\limits_{\preceq} \{ a^n \mid n \geq 0 \}
$$
(where $a^n = a \cdot \ldots \cdot a$, $n$ times, and $a^0 = \One$).\footnote{In the presence of divisions,
the usual definition of *-continuity, $b \cdot a^* \cdot c = \sup_{\preceq} \{ b \cdot a^n \cdot c \mid n \geq 0 \}$, can
be simplified by removing the context $b$, $c$.} In particular, the definition of a *-continuous RKL postulates existence of all these suprema.

Axioms and rules for Kleene star naturally come from its definition: 
$$
\infer[(\KStar\to)_\omega]{\Gamma, A^*, \Delta \to C}{\bigl( \Gamma, A^n, \Delta \to C \bigr)_{n=0}^\infty}
\qquad
\infer[(\to\KStar),\ n \ge 0]{\Pi_1, \ldots, \Pi_n \to A^*}{\Pi_1 \to A & \ldots & \Pi_n \to A}
$$
Here $A^n$ means $A, \ldots, A$, $n$ times; $A^0 = \Lambda$.

The left rule for Kleene star is an $\omega$-rule, {\em i.e.,} has countably many premises. In the presence of $\omega$-rule,
the notion of derivation should be formulated more accurately. A valid derivation is allowed to be infinite, but still should
be well-founded, that is, should not include infinite paths. Thus, the set of theorems of $\ACTomega$ is the smallest set of
sequents which includes all axioms and is closed under application of rules.

Notice that meets and joins are not necessary for defining the Kleene star; thus we can consider {\em residuated monoids with iteration}.

The logic presented above axiomatises the inequational theory of *-continous RKLs.
It is called {\em infinitary action logic}~\citep{BuszkowskiPalka2008} and denoted by $\ACTomega$.
Syntactically, the set of theorems of $\ACTomega$ is the smallest set that includes all axioms and is closed under inference rules presented above;
completeness is again by Lindenbaum -- Tarski construction.

\citet{Palka2007}~proved cut elimination for $\ACTomega$. As usual, cut elimination yields subformula property (if a formula appears somewhere in a cut-free
derivation, then it is a subformula of the goal sequent). Therefore, elementary fragments of $\ACTomega$ with restricted sets of connectives are obtained by
simply taking the corresponding subsets of inference rules. These fragments are denoted by listing the connectives in parentheses after the name of the calculus,
like $\ACTomega(\BS, \vee, \KStar)$, for example. 

Some of these fragments also have their specific names: $\ACTomega(\cdot, \BS, \SL, \One, \KStar)$ is the Lambek calculus with the unit and iteration and is denoted
by $\mathbf{L}^*_{\One\omega}$; its fragment without the unit, $\ACTomega(\cdot, \BS, \SL, \KStar)$, is denoted by $\mathbf{L}^*_\omega$. The multiplicative-additive
Lambek calculus, $\mathbf{MALC}$, is $\ACTomega(\cdot, \BS, \SL, \vee, \wedge)$. The Lambek calculus with the unit, $\LU$, is $\ACTomega(\cdot, \BS, \SL, \One)$.

Finally, $\ACTomega(\cdot, \BS, \SL)$ is the Lambek calculus allowing empty antecedents, but without the unit constant.
 This calculus is usually denoted by $\mathbf{L}^*$~\citep{Lambek1961}.
Unfortunately, this yields a notation clash with the Kleene star, which is also denoted by $\KStar$. 
Just ``$\mathbf{L}$'' is reserved for the system with Lambek's restriction (see the next section). Therefore, we introduce a new name for this calculus:
$\LL$,  which means that empty antecedents are allowed in this calculus. By $\LL(\BS,\SL)$ we denote the product-free fragment of $\LL$ (which is the same as
$\ACTomega(\BS,\SL)$).

\citet{Buszkowski2007} and \citet{Palka2007} show that the derivability problem in $\ACTomega$ is $\Pi_1^0$-complete---in particular, the set of
all theorems of $\ACTomega$ is not recursively enumerable. Thus, the usage of an infinitary proof system becomes inevitable.
\citet{Buszkowski2007} also shows $\Pi_1^0$-hardness for fragments of $\ACTomega$ where one of the additive connectives ($\vee$ or $\wedge$, but not both)
is removed. Complexity of the Lambek calculus with Kleene star, but without both additives, {\em i.e.,} the logic of residuated monoids
with iteration, which we denote by $\Lomega$, however, is left by Buszkowski as an open problem.
In this article we prove $\Pi_1^0$-hardness of this problem (the upper $\Pi_1^0$ bound is inherited by conservativity).

The rest of this article is organised as follows. In~\ref{S:Lrestr} we discuss Lambek's restriction and how to prove the desired complexity result
without this restriction imposed. In~\ref{S:grammar} we survey 
results on connections between Lambek grammars and
context-free ones.  The main part of the article is contained in~\ref{S:complex} and~\ref{S:Safiullin}. In~\ref{S:complex}, we represent
complexity results by Buszkowski and Palka for $\ACTomega$ and show how to strengthen the lower bound and prove $\Pi_1^0$-completeness
for the system without $\vee$ and $\wedge$. This complexity proof uses the version of Safiullin's theorem for the Lambek calculus without
Lambek's restriction. We prove it in~\ref{S:Safiullin}, which is the most technically hard section. Finally,~\ref{S:compl} and~\ref{S:conclusion}
contain some final remarks and discussions.

\section{Lambek's Restriction}\label{S:Lrestr}

A preliminary version of this article was presented at WoLLIC 2017 and published in its lecture notes~\citep{Kuznetsov2017WoLLIC}.
In the WoLLIC paper, we show $\Pi_1^0$-completeness of a system closely related to $\Lomega$---namely, the logic of residuated {\em semigroups} 
with positive iteration, denoted by $\Lpomega$.
From the logical point of view, in $\Lpomega$ there is no unit constant, and there should be always something on the
left-hand side. 

This constraint is called {\em Lambek's restriction}.  In the presence of Lambek's restriction one cannot add Kleene star, and it 
gets replaced by {\em positive iteration,}
or ``Kleene plus,'' with the following rules:
$$
\infer[({}^+\to)_\omega]{\Gamma, A^+, \Delta \to C}{\bigl(\Gamma, A^n, \Delta \to C\bigr)_{n = 1}^\infty}
\qquad
\infer[(\to {}^+)_n, n \geq 1]
{\Pi_1, \ldots, \Pi_n \to A^+}
{\Pi_1 \to A & \ldots & \Pi_n \to A}
%{\bigl(\Pi_i \to A_i\bigr)_{i=1}^n}
$$
In the setting without Lambek's restriction Kleene plus is also available, being expressible in terms of Kleene star:
$A^+ = A \cdot A^*$.

Lambek's restriction was imposed on the calculus $\mathbf{L}$ 
in Lambek's original paper~\citep{Lambek1958} and is motivated linguistically~\citep[Sect.~2.5]{MootRetore}.
Unfortunately, there are no conservativity relations
between $\Lpomega$ and $\Lomega$. For example, $(p \BS p) \BS q \to q$ is derivable in $\LL$ (thus in $\Lomega$), but not in $\mathbf{L}$ (thus not in $\Lpomega$),
though the antecedent here is not empty. Hence, $\Pi_1^0$-hardness of the latter is not obtained automatically as a corollary.

Moreover, the proof of $\Pi_1^0$-hardness of $\Lpomega$ crucially depends on the following result by~\citet{Safiullin2007}:
any context-free language without the empty word can be generated by a Lambek grammar with unique type assignment, and Safiullin's proof
essentially uses Lambek's restriction. In this article, we feature a new result, namely, $\Pi_1^0$-completeness of $\Lomega$ itself.
We modify Safiullin's construction and extend his result to $\LL$ (which is already
interesting on its own). Next, we use this
new result in order to prove $\Pi_1^0$-hardness of $\Lomega$.

\section{Lambek Grammars and Context-Free Grammars}\label{S:grammar}

In this section we introduce Lambek categorial grammars and formulate equivalence results connecting them with
a more widely known formalism, context-free grammars. 
The notion of {\em Lambek grammar} is defined as follows:
\begin{definition}
A Lambek grammar over an alphabet $\Sigma$ is a triple $\Gc = \langle \Sigma, H, \rhd \rangle$, where
$H$ is a designated Lambek formula, called goal type, and $\rhd$ is a finite binary correspondence between
letters of $\Sigma$ and Lambek formulae. 
\end{definition}

\begin{definition}
A word $a_1 \dots a_n$ over $\Sigma$ is accepted by a grammar $\Gc$, if there exist formulae $A_1$, \ldots, $A_n$ such that
$a_i \rhd A_i$ ($i = 1, \ldots, n$) and the sequent $A_1, \ldots, A_n \to H$ is derivable in the Lambek calculus.
\end{definition}

\begin{definition}
The language generated by grammar $\Gc$ is the set of all words accepted by this grammar.
\end{definition}

In view of the previous section, one should distinguish grammars with and without Lambek's restriction:
the same grammar could generate different languages, depending on whether Lambek's restriction is imposed or not.

We also recall the more well-known notion of {\em context-free grammars.}

\begin{definition}
A context-free grammar over alphabet $\Sigma$ is a quadruple $\Gc = \langle \Nc, \Sigma, P, S \rangle$, where $\Nc$ is an auxiliary alphabet
of non-terminal symbols, not intersecting with $\Sigma$, 
 $S$ is a designated non-terminal symbol called the starting symbol, and $P \subset \Nc \times (\Nc \cup \Sigma)^*$ is
a finite set of production rules. Production rules are written in the form $A \Rightarrow \alpha$, where $A$ is a non-terminal symbol and
$\alpha$ is a word (possibly empty) over alphabet $\Nc \cup \Sigma$. Production rules with an empty $\alpha$ (of the form $A \Rightarrow \varepsilon$)%
\footnote{We use $\Lambda$ for the empty sequence of formulae and $\varepsilon$ for the empty word over an alphabet.} are
called $\varepsilon$-rules.
\end{definition}

\begin{definition}
A word $\eta\alpha\theta$ over $\Nc \cup \Sigma$ is immediately derivable from $\eta A \theta$ in context-free grammar $\Gc$
(notation: $\eta A \theta \Rightarrow_\Gc \eta \alpha\theta$), if  $(A \Rightarrow \alpha) \in P$. The  derivability relation
$\Rightarrow^*_\Gc$ is the reflexive-transitive closure of $\Rightarrow_\Gc$.
\end{definition}

\begin{definition}
The language generated by context-free grammar $\Gc$ is the set of all words over $\Sigma$ (i.e., without non-terminals) which are derivable from 
the starting symbol $S$: $\{ w \in \Sigma^* \mid S \Rightarrow_\Gc^* w \}$. Such languages are called context-free.
\end{definition}

Two grammars (for example, a Lambek grammar and a context-free one) are called {\em equivalent,} if they generate the same language.

Buszkowski's proof of $\Pi_1^0$-hardness of $\ACTomega$ uses the following translation of context-free grammars into Lambek grammars:
\begin{thm}[C. Gaifman, W. Buszkowski]\label{Th:Gaifman}
Any context-free grammar without $\varepsilon$-rules can be algorithmically transformed into an equivalent 
Lambek grammar, no matter with or without Lambek's restriction.
\end{thm}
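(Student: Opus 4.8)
The plan is to combine a normal-form transformation of context-free grammars with the syntax and, crucially, the \emph{semantics} of $\LL$. First I would put the given $\varepsilon$-rule-free grammar into Greibach normal form. Since it has no $\varepsilon$-rules its language $L$ omits the empty word, so a standard effective transformation produces an equivalent grammar $\Gc_1 = \langle \Nc, \Sigma, P, S \rangle$ all of whose productions read $A \Rightarrow a B_1 \cdots B_k$ with $a \in \Sigma$, $B_1, \ldots, B_k \in \Nc$ and $k \ge 0$; I would also prune useless non-terminals, so that every $B \in \Nc$ derives some non-empty word. From $\Gc_1$ I would build a Lambek grammar $\Gc_2$ by taking the non-terminals as primitive types, declaring $S$ the goal type, and, for each production $A \Rightarrow a B_1 \cdots B_k$, putting $a \rhd (\cdots((A \SL B_k) \SL B_{k-1}) \cdots \SL B_1)$ (for $k = 0$ simply $a \rhd A$). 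It then remains to prove $L(\Gc_2) = L$, and I would establish the two inclusions by quite different tools.

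For $L \subseteq L(\Gc_2)$ I would show, by induction on the height of a $\Gc_1$-derivation tree, the stronger statement: whenever $X \Rightarrow^*_{\Gc_1} w$ with $X \in \Nc$ and $w$ non-empty, the letters of $w$ can be given the above types so that the resulting sequent $\to X$ is derivable in $\mathbf{L}$. In the step, if the root production is $X \Rightarrow a B_1 \cdots B_k$, then $w = a u_1 \cdots u_k$ with $B_j \Rightarrow^*_{\Gc_1} u_j$ and each $u_j$ non-empty; the induction hypothesis supplies type sequences $\Theta_j$ with $\Theta_j \to B_j$ derivable, and assigning $\beta = (\cdots(X \SL B_k)\cdots \SL B_1)$ to $a$ one first derives $\beta, B_1, \ldots, B_k \to X$ by $k$ applications of $(\SL\to)$ on axioms and then splices in $\Theta_k, \ldots, \Theta_1$ by $k$ cuts, obtaining the sequent for $w$. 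Every sequent here has a non-empty antecedent, so the derivation lives already in $\mathbf{L}$, the calculus with Lambek's restriction, hence also in $\LL$.

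For the converse $L(\Gc_2) \subseteq L$ I would appeal to the language model $\mathcal{P}(\Sigma^*)$, which, as noted in \ref{S:intro}, is a residuated monoid and therefore validates every theorem of $\LL$. Interpret each primitive type $X$ as $\nrm{X} := \{ w \in \Sigma^* \mid X \Rightarrow^*_{\Gc_1} w \}$. Unwinding the iterated division shows that this interpretation respects every declaration, i.e. $a \in \nrm{(\cdots(A \SL B_k)\cdots \SL B_1)}$, which reduces to the evident fact that $a v_1 \cdots v_k \in \nrm{A}$ whenever $v_j \in \nrm{B_j}$, by first applying $A \Rightarrow a B_1 \cdots B_k$ and then $B_j \Rightarrow^*_{\Gc_1} v_j$. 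Hence if $a_1 \cdots a_n$ is accepted by $\Gc_2$ through types $A_1, \ldots, A_n$, soundness of $\LL$ over $\mathcal{P}(\Sigma^*)$ gives $a_1 \cdots a_n \in \nrm{A_1} \cdots \nrm{A_n} \subseteq \nrm{S} = L$. As $\mathbf{L}$ is a subsystem of $\LL$, the same inclusion holds when $\Gc_2$ is read under Lambek's restriction; together with the previous paragraph, $L(\Gc_2) = L$ both with and without the restriction, proving the theorem.

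I expect no deep obstacle here; the two mildly delicate points are (i) invoking the Greibach-normal-form theorem in exactly the shape needed --- terminal-first productions, no useless non-terminals, and exact preservation of the $\varepsilon$-free language --- which I would simply cite from standard formal-language theory, and (ii) the bookkeeping in the soundness induction, where one must order the cuts correctly and check that no antecedent ever becomes empty; it is precisely this last point that makes the statement hold for $\mathbf{L}$ as well as for $\LL$.
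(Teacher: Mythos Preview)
Your argument is correct. Note, however, that the paper does not actually prove Theorem~\ref{Th:Gaifman}: it is stated as a known result and attributed to Gaifman (for basic categorial grammars) and to Buszkowski (for the extension to Lambek grammars), with no proof given beyond those citations. So there is no ``paper's own proof'' to compare against in detail.

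That said, your construction is essentially Gaifman's: pass to Greibach normal form and assign to each terminal $a$, for every production $A \Rightarrow a B_1 \cdots B_k$, the iterated right quotient $A \SL B_k \SL \cdots \SL B_1$. The forward inclusion $L \subseteq L(\Gc_2)$ by induction on the grammar derivation is the standard one and, as you observe, lives already in $\mathbf{L}$. For the converse inclusion your use of soundness over the language model $\mathcal{P}(\Sigma^*)$ is a clean semantic shortcut; the original literature tends to argue this direction more syntactically (showing that on types of this restricted shape the Lambek calculus does not prove more than the basic Ajdukiewicz--Bar-Hillel rules), but your route is perfectly valid and arguably more transparent. The two ``delicate points'' you flag are exactly the right ones, and neither hides a real obstacle.
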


This theorem was proved by Gaifman~\citep{BGS1960}, but with basic categorial grammars instead of Lambek grammars.
\citet{Buszkowski1985equivalence} noticed that Gaifman's construction works for Lambek grammars also.

The reverse translation is also available (in this article we do not need it):
\begin{thm}[M. Pentus]
Any Lambek grammar, no matter with or without Lambek's restriction,
 can be algorithmically transformed into an equivalent context-free grammar.~{\rm\citep{PentusCF}}
\end{thm}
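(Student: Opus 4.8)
The plan is to reconstruct Pentus's proof \citep{PentusCF}, which rests on two ingredients: the \emph{free group interpretation} of the Lambek calculus and an \emph{interpolation lemma} carrying an effective bound on the size of the interpolant. Fix a Lambek grammar $\Gc = \langle \Sigma, H, \rhd\rangle$. By cut elimination and the resulting subformula property, any derivation witnessing acceptance of a word uses only subformulas of $H$ and of the finitely many types in the range of $\rhd$; in particular only finitely many variables are involved, and one reads off a bound $N$, computable from $\Gc$, such that every formula relevant to $\Gc$ has at most $N$ occurrences of variables and connectives.

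First I would introduce the free group homomorphism $\varphi\colon \Fm \to \FG(\Var)$ determined by $\varphi(p)=p$, $\varphi(A\cdot B)=\varphi(A)\varphi(B)$, $\varphi(A\BS B)=\varphi(A)^{-1}\varphi(B)$, $\varphi(B\SL A)=\varphi(B)\varphi(A)^{-1}$, and note that a free group with trivial order is a residuated monoid, so derivability of $A_1,\dots,A_n\to C$ forces $\varphi(A_1)\cdots\varphi(A_n)=\varphi(C)$. The second, and far more delicate, step is Pentus's interpolation lemma: for every derivable sequent $\Gamma,\Psi,\Delta\to C$ there is a formula $D$ such that $\Psi\to D$ and $\Gamma,D,\Delta\to C$ are both derivable, $\varphi(D)=\varphi(\Psi)$, and --- crucially --- the size of $D$ is bounded by a fixed function $g$ of the maximal complexity of the subformulas occurring, hence by $g(N)$, \emph{independently} of $|\Psi|$ and of $|\Gamma|+|\Delta|$. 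The interpolant obtained by naively inspecting a cut-free proof grows with the proof; the constraint $\varphi(D)=\varphi(\Psi)$ is what rigidifies it, and the technical core of the argument is an induction over cut-free derivations showing that, among all admissible interpolants, one of size at most $g(N)$ can always be selected.

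Granting these, I would build the context-free grammar $\Gc'$ over $\Sigma$ whose nonterminals are all Lambek formulas over the (finitely many) variables of $\Gc$ with at most $M := \max(N, g(N))$ occurrences of variables and connectives --- a finite set --- with start symbol $H$ and with productions: (i) $B \Rightarrow a$ whenever $a\rhd A$ and $A\to B$ is derivable; (ii) $B \Rightarrow B_1 B_2$ whenever $B_1, B_2 \to B$ is derivable; and, when Lambek's restriction is \emph{not} imposed, also $H \Rightarrow \varepsilon$ in case $\Lambda\to H$ is derivable. Since derivability of such bounded sequents is decidable (e.g.\ by cut-free proof search), $\Gc'$ is produced algorithmically. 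For $L(\Gc')\subseteq L(\Gc)$: unfold a $\Gc'$-derivation of $a_1\cdots a_n$ into a binary tree; its leaves give formulas $A_i$ with $a_i\rhd A_i$ and $A_i\to B_i$ derivable, its internal nodes give derivable $B', B''\to B$, and splicing these together with $(\CUT)$ yields a derivation of $A_1,\dots,A_n\to H$, so $a_1\cdots a_n\in L(\Gc)$. For $L(\Gc)\subseteq L(\Gc')$: given $a_i\rhd A_i$ with $A_1,\dots,A_n\to H$ derivable, apply the interpolation lemma recursively, each time splitting the current antecedent near the middle and replacing a contiguous block by its bounded interpolant; an easy induction on $n$ shows this terminates with a binary tree whose leaves are sequents $A_i\to B$ (handled by productions of type (i)) and whose branchings are sequents $B_1,B_2\to B$ (type (ii)), all formulas having complexity at most $M$, and reading this tree off gives a $\Gc'$-derivation of $a_1\cdots a_n$.

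I expect the main obstacle to be precisely the effective interpolation lemma --- obtaining a bound on $\|D\|$ that does not depend on the length of the interpolated block --- since this is exactly what makes the nonterminal alphabet of $\Gc'$ finite. Once it is available, everything else (the free group homomorphism, the bookkeeping for $\Gc'$, the two language inclusions) is routine; accommodating the product connective $\cdot$ rather than only the product-free fragment, and checking that none of the steps is sensitive to whether Lambek's restriction is present, add only administrative overhead.
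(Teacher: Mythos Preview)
The paper does not give a proof of this theorem at all: it is stated as a background result attributed to Pentus, with only the citation \citep{PentusCF}, and the surrounding text explicitly remarks ``in this article we do not need it.'' So there is no proof in the paper to compare your proposal against.

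That said, your outline is a faithful high-level reconstruction of Pentus's original argument: the free group interpretation, the bounded-interpolant lemma (the heart of the proof, exactly as you identify), and the construction of a context-free grammar whose nonterminals are Lambek formulae of bounded size are precisely the ingredients of \citep{PentusCF}. Your caveat that the effective bound on the interpolant is the hard step is accurate; Pentus obtains it via his ``thin sequent'' machinery and the joining-formula construction that the present paper does cite and use elsewhere (Theorem~\ref{Th:join}). One small remark: your production schema (ii) should also be read modulo the free-group constraint --- in Pentus's construction one only needs nonterminals whose free-group value actually arises, which keeps the grammar small but is not essential for correctness.
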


For our purposes we shall need a refined version of Theorem~\ref{Th:Gaifman}.
\begin{definition}
A Lambek grammar is a grammar with unique type assignment, if for any $a \in \Sigma$ there exists
exactly one formula $A$ such that $a \rhd A$.
\end{definition}

\begin{thm}[A. Safiullin]\label{Th:Safiullin}
Any context-free grammar without $\varepsilon$-rules can be algorithmically transformed into an equivalent
Lambek grammar with unique type assignment with Lambek's restriction.~{\rm\citep{Safiullin2007}}
\end{thm}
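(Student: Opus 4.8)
The plan is to combine the Gaifman--Buszkowski transformation (Theorem~\ref{Th:Gaifman}) with an explicit procedure that ``merges'' several types of one letter into a single type, using fresh primitive types as selectors. First I would apply Theorem~\ref{Th:Gaifman} to the given $\varepsilon$-free context-free grammar $G$, obtaining an equivalent Lambek grammar $\Gc = \langle \Sigma, H, \rhd \rangle$ in which a letter may carry finitely many types; it is convenient to start Gaifman's construction from the Greibach normal form of $G$, so that every type associated to a letter $a$ has a uniform shape (a primitive type divided on the right by a, possibly empty, product of primitive types). The whole task then reduces to the following: turn a Lambek grammar with finitely many types per letter into an equivalent one with a unique type per letter, staying inside $\ACTomega(\cdot,\BS,\SL)$ and keeping Lambek's restriction. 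Were additive disjunction available, one would just replace $a \rhd A_1, \ldots, A_n$ by $a \rhd A_1 \vee \cdots \vee A_n$; the real work is to emulate this disjunction inside the additive-free Lambek calculus.

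For this I would process the letters one at a time. Given a letter $a$ with types $A_1, \ldots, A_n$, I would introduce fresh primitive types $q_1, \ldots, q_n$ (plus, if needed, a few further auxiliary fresh atoms) and build from $A_1, \ldots, A_n$ and these atoms a single type $A^{\star}$ such that: (i) for every $i$, $A^{\star}$ can do everything $A_i$ did --- any sequent derivable with $A_i$ occurring in some position stays derivable after putting $A^{\star}$ there and letting the neighbouring occurrences supply the ``selector'' $q_i$; this completeness direction is handled by writing down explicit derivations and closing under $(\mathrm{cut})$; and (ii) conversely, since the $q_i$ occur nowhere else in the grammar, in any cut-free derivation of $B_1, \ldots, B_k \to H$ over the new types each occurrence of $A^{\star}$ is forced to be analysed as exactly one of the $A_i$, so that one can read off a choice $a_j \rhd A_{i_j}$ in $\Gc$ with $A_{i_1}, \ldots, A_{i_k} \to H$ derivable. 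Iterating over all multi-typed letters yields the required unique-type grammar, and every step of this is plainly effective, so the transformation is algorithmic.

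The two tools that make this work are Lambek's cut elimination (so that in (ii) one may assume a cut-free derivation and exploit the subformula property) and a careful polarity-and-occurrence analysis of the fresh atoms. The hard part will be soundness, i.e.\ direction (ii): one has to show that the merged type is \emph{rigid}, that is, that no unintended interaction of the selector atoms $q_i$ across different positions of the word can manufacture a derivation of $\to H$ that was not already present in $\Gc$. This calls for a delicate induction on cut-free derivations, tracking which and how many fresh atoms sit in each context, and it is exactly here that Lambek's non-emptiness restriction is used essentially: it rules out the empty-antecedent subderivations in which a selector atom could be ``absorbed'' and the intended correspondence broken. Lifting the argument to $\LL$, where that restriction is dropped, is precisely what necessitates the more elaborate construction of~\ref{S:Safiullin}.
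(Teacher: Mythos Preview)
Your high-level plan is reasonable in outline --- reduce the problem to simulating a finite disjunction $A_1 \vee \cdots \vee A_n$ by a single Lambek type, then verify soundness by cut-free proof analysis --- but the crucial step, the actual construction of $A^{\star}$, is left entirely unspecified, and the one concrete hint you give does not work. You suggest that ``neighbouring occurrences supply the selector $q_i$''; however, in a grammar with unique type assignment every letter carries a fixed type, so the neighbours of any occurrence of $a$ are determined by the input word and cannot vary to select different $A_i$'s at different positions. The same letter $a$ may even occur several times in one word, needing to act as different $A_i$'s at different places while flanked by the very same neighbours. No externally supplied fresh atoms can resolve this; the disjunction has to be simulated \emph{internally} to $A^{\star}$, and designing such a formula is precisely the content of Safiullin's theorem.

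In the paper this theorem is only cited, but the construction carried out in~\ref{S:Safiullin} (for $\LL$, with the remark that it also works under Lambek's restriction and hence subsumes Theorem~\ref{Th:Safiullin}) shows what is actually required. It does not merge types letter by letter; it rebuilds the grammar from Greibach normal form in one pass. The technical ingredients you are missing are: (i)~Pentus' \emph{joining formulae} (Theorem~\ref{Th:join}), which for any finite family of \emph{zero-balanced} formulae produce a single product-free formula derivable from each --- this is the genuine surrogate for $\vee$; (ii)~specially designed \emph{sentinel} formulae $S_{p,q,r}$, themselves zero-balanced, that delimit pieces of the antecedent (Lemmas~\ref{Lm:sentinel_right}--\ref{Lm:sentinel}); and (iii)~a wrapper $\is(\Uc_j)$ around the joining formula so that the unique type $K_j$ of $a_j$ encodes all production rules beginning with $a_j$. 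The soundness direction is indeed a delicate cut-free analysis, but it proceeds by tracking \emph{principal occurrences} and tops (Lemmas~\ref{Lm:top} and~\ref{Lm:decomposition}), not fresh selector atoms.
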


Notice that Theorem~\ref{Th:Safiullin}, as formulated and proved by Safiullin, needs Lambek's restriction.
If one applied Safiullin's transformation and then abolished Lambek's restriction, the resulting grammar
could generate a different language, and the new grammar would not be equivalent to the original context-free one.

\section{Complexity of the Infinitary Calculi with and without Additives}\label{S:complex}
We start with the known results on algorithmic complexity of $\ACTomega$, infinitary action logic with additive connectives.

\begin{thm}[W. Buszkowski and E. Palka]\label{Th:BuszkoPalka} 
The derivability problem in $\ACTomega$ is $\Pi_1^0$-complete.~{\rm\citep{Buszkowski2007,Palka2007}}
\end{thm}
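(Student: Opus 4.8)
\emph{Proof sketch.} The claim has two halves: derivability in $\ACTomega$ is in $\Pi_1^0$, and it is $\Pi_1^0$-hard.

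\emph{Upper bound.} The only obstruction to recursive enumerability is the $\omega$-rule $(\KStar\to)_\omega$, so the plan is a ``$\KStar$-elimination'' (approximation) argument. First invoke Palka's cut elimination, hence the subformula property. For $n \ge 0$ let $(\Pi \to C)^{(n)}$ be the $n$-th approximant of a sequent, obtained by replacing --- innermost first, to accommodate nesting --- each negatively occurring subformula $B^\KStar$ by the finite join $\One \vee B \vee B^2 \vee \dots \vee B^n$. The target claim is
$$
\ACTomega \vdash \Pi \to C \iff \text{for every } n \ge 0 \text{ the approximant } (\Pi \to C)^{(n)} \text{ is derivable}
$$
in the calculus obtained from $\ACTomega$ by deleting $(\KStar\to)_\omega$, which is finitary, cut-free and decidable. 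The direction ``$\Rightarrow$'' is an induction on the given well-founded $\ACTomega$-derivation: each application of $(\KStar\to)_\omega$ is replaced by its finitely many premises with $k \le n$ copies, glued by $(\vee\to)$ and $(\One\to)$. The converse is the crux: the infinitely many finite derivations of the approximants must be fused into one well-founded $\ACTomega$-derivation, which is done by a compactness / bar-induction argument in which the subformula property bounds the shape of a candidate derivation. Granting this, $\ACTomega$-derivability is a countable intersection of decidable predicates, hence $\Pi_1^0$.

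\emph{Lower bound.} We reduce the universality problem for context-free grammars: ``given a context-free grammar $G$ over $\Sigma$, is $L(G) = \Sigma^+$?''. This is $\Pi_1^0$-complete --- removing $\varepsilon$-rules we may assume $L(G) \subseteq \Sigma^+$, so non-universality (``some word of $\Sigma^+$ is missing'') is $\Sigma_1^0$, and its $\Sigma_1^0$-hardness is witnessed by the context-free language of all words that do \emph{not} encode an accepting computation of a given Turing machine, which equals $\Sigma^+$ exactly when the machine fails to accept. By Theorem~\ref{Th:Gaifman} turn $G$ into an equivalent Lambek grammar, which we take moreover with unique type assignment, $a \rhd A_a$ (a Safiullin-style ingredient, cf.\ Theorem~\ref{Th:Safiullin}; since $\ACTomega$ does not impose Lambek's restriction, one uses the variant of that theorem suited to $\LL$, or else, as in Buszkowski's original argument, leans on the additives to bypass this point). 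Put $B := \bigvee_{a \in \Sigma} A_a$ and consider the single $\ACTomega$-sequent $B \cdot B^\KStar \to H$. Unfolding $(\cdot\to)$, $(\KStar\to)_\omega$, invertibility of $(\vee\to)$ and the provable law $(X \vee Y) \cdot Z \leftrightarrow X \cdot Z \vee Y \cdot Z$, one checks that $\ACTomega \vdash B \cdot B^\KStar \to H$ iff $A_{a_1}, \dots, A_{a_n} \to H$ is derivable for every nonempty word $a_1 \cdots a_n$ over $\Sigma$, i.e.\ iff the grammar accepts all of $\Sigma^+$, i.e.\ iff $L(G) = \Sigma^+$. Since $G \mapsto (B \cdot B^\KStar \to H)$ is computable, $\ACTomega$-derivability is $\Pi_1^0$-hard.

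\emph{Main obstacle.} The subtle step is the ``$\Leftarrow$'' direction of the approximation lemma for the upper bound --- manufacturing a single well-founded infinite derivation out of infinitely many finite ones; the naive reading of the $\omega$-rule only yields a $\Pi_1^1$ bound. For the lower bound, the point relevant to the remainder of this paper is that the compression of a grammar into one sequent leans essentially on the additives: $\vee$ collapses the alphabet into the formula $B$, and invertibility of $(\vee\to)$ simulates the universal quantifier over the letters of a word. Discarding $\vee$ and $\wedge$ --- which is what the complexity of $\Lomega$ demands --- breaks this mechanism, and repairing it is precisely the problem left open by Buszkowski.
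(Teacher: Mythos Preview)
Your sketch is correct and aligns with the paper's account: Palka's $\KStar$-elimination by approximation for the $\Pi_1^0$ upper bound, and Buszkowski's reduction from the totality problem for context-free grammars for hardness. One point of divergence is worth flagging. In the paper's (and Buszkowski's original) lower bound, Gaifman's translation (Theorem~\ref{Th:Gaifman}) is used \emph{without} unique type assignment; the several types $A_{i,1},\ldots,A_{i,n_i}$ attached to a letter $a_i$ are fused by additive conjunction into $A_i = A_{i,1}\wedge\ldots\wedge A_{i,n_i}$, and only then is $E = A_1\vee\ldots\vee A_m$ formed. You relegate this to a parenthetical alternative and lead instead with a Safiullin-style unique-type grammar suited to $\LL$. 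But that variant of Safiullin's theorem is precisely Theorem~\ref{Th:SafiullinE}, the present paper's main new technical contribution, established only in~\ref{S:Safiullin}; invoking it for Theorem~\ref{Th:BuszkoPalka} would be a forward reference in the paper's logical structure and, historically, not what Buszkowski did. For the attributed result the $\wedge$-based route should stay primary; your Safiullin shortcut is exactly what the rest of the paper develops in order to drop both additives and reach Theorem~\ref{Th:main}.
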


An algorithmic decision problem, presented as a set $\mathcal{A}$, belongs to $\Pi_1^0$, if there exists a decidable set 
$\mathcal{R}$ of pairs $\langle x,y \rangle$, such that $x \in \mathcal{A}$ if and only if $\langle x,y \rangle \in \mathcal{R}$ for all $y$.
The $\Pi_1^0$ complexity class is dual to $\Sigma_1^0$, the class of recursively enumerable sets\footnote{We suppose that sequents
of $\ACTomega$, as well as context-free grammars, are encoded as words over a fixed finite alphabet, or as natural numbers.}: a set is $\Pi_1^0$ if and only if
its {\em complement} is recursively enumerable.
The ``most complex'' sets in $\Pi_1^0$ are called $\Pi_1^0$-complete sets:
a set $\mathcal{A}$ is $\Pi_1^0$-complete, if (1) it belongs to $\Pi_1^0$ (upper bound); (2) it is {\em $\Pi_1^0$-hard,} that is, any other $\Pi_1^0$ set $\mathcal{B}$
is m-reducible to $\mathcal{A}$ (lower bound). The latter means that there exists a computable function $f$ such that $x \in \mathcal{A}$ if{f} $f(x) \in \mathcal{B}$.
By duality, a set is $\Pi_1^0$-complete if{f} its complement is $\Sigma_1^0$-complete. For example, since the halting problem for Turing machines is 
$\Sigma_1^0$-complete, the {\em non-halting} problem is $\Pi_1^0$-complete. 

Notice that a $\Pi_1^0$-complete set cannot belong to $\Sigma_1^0$: otherwise it would be decidable by Post's theorem, and this would lead to decidability of
all sets in $\Pi_1^0$, which is not the case. Thus, Theorem~\ref{Th:BuszkoPalka} implies the fact that the set of sequents provable in $\ACTomega$ is 
not recursively enumerable, and $\ACTomega$ itself cannot be reformulated as a system with finite derivations.

Division operations ($\SL$ and $\BS$) are essential for the $\Pi_1^0$ lower complexity bound. For the fragment $\ACTomega(\cdot,\vee,\KStar)$ a famous 
result by~\citet{Kozen1994IC} provides completeness of an inductive axiomatization for Kleene star and establishes PSPACE complexity. As we prove in this article,
however, the fragment with divisions and without additives, $\ACTomega(\BS,\SL,\cdot,\KStar)$, is still $\Pi_1^0$-complete.

The upper bound in Theorem~\ref{Th:BuszkoPalka} was proved by~\citet{Palka2007} using the following *-elimination technique.
For each sequent define its $n$-th approximation by replacing all negative occurrences of 
$A^*$ by $A^{\leq n} = \One \vee A \vee A^2 \vee \ldots \vee A^n$. Formally this is done by the following mutually recursive definitions:
\begin{align*}
& P_n(p_i) = p_i && N_n(p_i) = p_i \\
& P_n(\One) = \One && N_n(\One) = \One\\
& P_n(A \BS B) = N_n(A) \BS P_n(B) && N_n(A \BS B) = P_n(A) \BS N_n(B)\\
& P_n(B \SL A) = P_n(B) \SL N_n(A) && N_n(B \SL A) = N_n(B) \SL P_n(A)\\
& P_n(A \cdot B) = P_n(A) \cdot P_n(B) && N_n(A \cdot B) = N_n(A) \cdot N_n(B) \\
& P_n(A \vee B) = P_n(A) \vee P_n(B) && N_n(A \vee B) = N_n(A) \vee N_n(B) \\
& P_n(A \wedge B) = P_n(A) \wedge  P_n(B) && N_n(A \wedge B) = N_n(A) \wedge N_n(B) \\
& P_n(A^*) = \bigl(P_n(A)\bigr)^* && N_n(A^*) = \bigl(N_n(A)\bigr)^{\leq n}
\end{align*} 
Now the $n$-th approximation of a sequent $A_1, \ldots, A_k \to B$ can be defined as $N_n(A_1), \ldots, N_n(A_n) \to P_n(B)$.
\citet{Palka2007} proves that a sequent is derivable in $\ACTomega$ if and only if all its approximations are derivable. In a cut-free derivation of
an approximation, however, the $(\KStar\to)_\omega$ rule could never be applied, since there are no more negative occurrences of $\KStar$-formulae.
Proof search without the $\omega$-rule is decidable. Thus, we get $\Pi_1^0$ upper bound for $\ACTomega$.
By conservativity, this upper bound is valid for all elementary fragments of $\ACTomega$, in particular, for $\Lomega$.

For the lower bound ($\Pi_1^0$-hardness), \citet{Buszkowski2007} presents a reduction of a well-known
$\Pi_1^0$-complete problem, the totality problem for context-free grammars, to derivability in~$\ACTomega$.
Buszkowski's reduction is as follows. Let {\sc total}$^+$ denote the following algorithmic problem:
given a context-free grammar without $\varepsilon$-rules, determine whether it generates all non-empty words over $\Sigma$.
It is widely known that {\sc total}$^+$ is $\Pi_1^0$-complete~\citep{Sipser,DuKo}. The reduction of this problem to 
derivability in~$\ACTomega$ is performed as follows: given a context-free grammar, 
transform it (by Theorem~\ref{Th:Gaifman}) into an equivalent Lambek grammar $\Gc = \langle \Sigma, \rhd, H \rangle$. Suppose that
$\Sigma = \{ a_1, \ldots, a_m \}$ and for each $i = 1, \ldots, m$ the formulae that 
are in $\rhd$ correspondence with $a_i$ are $A_{i,1}, \ldots, A_{i,{n_i}}$.
Let 
$A_i = A_{i,1} \wedge \ldots \wedge A_{i,{n_i}}$ ($i = 1, \ldots, n$) and
$E = A_1 \vee \ldots \vee A_m$. Then the grammar generates all non-empty words if and only if
$E^+ \to H$ is derivable in $\ACTomega$~\citep[Lm.~5]{Buszkowski2007}. 
Recall that $E^+$ is $E^*, E$.
Thus, we have a reduction of {\sc total}$^+$ to the derivability problem in $\ACTomega$, and therefore the latter is $\Pi_1^0$-hard.

This construction essentially uses additive connectives, $\vee$ and $\wedge$. As shown by \citet{Buszkowski2007}, one can easily
get rid of $\vee$ by the following trick. First, notice that {\sc total}$^+$ is already $\Pi_1^0$-hard if we consider only languages
over a two-letter alphabet $\{a_1,a_2\}$~\citep{DuKo}. Second, $E^* = (A_1 \vee A_2)^*$ can be equivalently replaced
by $(A_1^* \cdot A_2)^* \cdot A_1^*$ (this equivalence is a law of Kleene algebra, thus provable in $\ACTomega$). The sequent $E^*,E \to H$
can be now replaced by a sequent without $\vee$ by the following chain of equivalent sequents:
\begin{align*}
& (A_1 \vee A_2)^*, A_1 \vee A_2 \to H \\
& (A_1^* \cdot A_2)^* \cdot A_1^*, A_1 \vee A_2 \to H \\
& (A_1^* \cdot A_2)^* \cdot A_1^* \to H \SL (A_1 \vee A_2)\\
& (A_1^* \cdot A_2)^* \cdot A_1^* \to (H \SL A_1) \wedge (H \SL A_2)
\end{align*}
(The last step is due to the equivalence $A \SL (B \vee C) \leftrightarrow (A \SL B) \wedge (A \SL C)$, which is provable
in $\mathbf{MALC}$ and thus in $\ACTomega$.)

Buszkowski also shows how to prove $\Pi_1^0$-hardness for $\ACTomega$ without $\wedge$---but then $\vee$ becomes irremovable.
We show how to get rid of both $\vee$ and $\wedge$ at once.

\begin{thm}\label{Th:main}
The derivability problem in $\Lomega$ is $\Pi_1^0$-complete.
\end{thm}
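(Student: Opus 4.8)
The plan is to prove the lower bound by reducing the totality problem {\sc total}$^+$ for context-free grammars to derivability in $\Lomega$; the matching $\Pi_1^0$ upper bound is already inherited from Theorem~\ref{Th:BuszkoPalka} by conservativity (Palka's $*$-elimination), as noted above. Recall that {\sc total}$^+$ stays $\Pi_1^0$-hard when restricted to $\varepsilon$-free grammars over the two-letter alphabet $\{a_1,a_2\}$. The crucial new ingredient is the version of Safiullin's theorem \emph{without} Lambek's restriction, i.e.\ the $\LL$-analogue of Theorem~\ref{Th:Safiullin} proved in~\ref{S:Safiullin}: starting from an $\varepsilon$-free context-free grammar $G$ over $\{a_1,a_2\}$, it yields an $\LL$-grammar $\Gc=\langle\{a_1,a_2\},\rhd,H\rangle$ with \emph{unique} type assignment, say $a_i\rhd A_i$, generating the same language (with respect to $\LL$). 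Unique type assignment is exactly what lets us dispense with the conjunctions $A_{i,1}\wedge\dots\wedge A_{i,n_i}$ appearing in Buszkowski's construction, since each letter now carries a single type.

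Given $\Gc$, set $E:=(A_1^{\KStar}\cdot A_2)^{\KStar}\cdot A_1^{\KStar}$ (Buszkowski's $\vee$-free surrogate for $(A_1\vee A_2)^{\KStar}$; as a language, $E$ is the set of \emph{all} words over $\{A_1,A_2\}$), and put $B_1:=A_1\cdot E$ and $B_2:=A_2\cdot E$. Every non-empty word over $\{a_1,a_2\}$ starts with $a_1$ or with $a_2$, so $B_1$ and $B_2$ together ``cover'' all non-empty words. I would then prove the equivalence
$$
G \text{ generates every non-empty word over } \{a_1,a_2\}
\quad\Longleftrightarrow\quad
\Lomega\vdash \Lambda\to (H\SL B_1)\cdot(H\SL B_2).
$$
Since the right-hand side is a sequent of $\Lomega$ (it contains only $\cdot$, $\SL$, $\KStar$) and the map $G\mapsto$ this sequent is computable, the reduction---hence $\Pi_1^0$-hardness---follows.

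The equivalence splits into two routine lemmas, both resting on cut elimination for $\Lomega$ (Palka). First, an ``unfolding'' lemma: invertibility of $(\cdot\to)$ together with $(\KStar\to)_\omega$ and the derivability of $A^n\to A^{\KStar}$ shows that, for $i=1,2$, $\Lomega\vdash B_i\to H$ iff $\LL\vdash A_{i_1},\dots,A_{i_k}\to H$ for every non-empty word $a_{i_1}\dots a_{i_k}$ over $\{a_1,a_2\}$ with $i_1=i$ (using that a star-free sequent is derivable in $\Lomega$ iff it is derivable in $\LL$); by unique type assignment this says precisely that $\Gc$---equivalently $G$---accepts every such word. Second, a ``merging'' lemma: a cut-free derivation of $\Lambda\to F\cdot G$ must begin with $(\to\cdot)$ followed by the appropriate right rules, so $\Lomega\vdash\Lambda\to(H\SL B_1)\cdot(H\SL B_2)$ iff $\Lomega\vdash B_1\to H$ and $\Lomega\vdash B_2\to H$ (the converse implication being immediate from $(\to\SL)$ and $(\to\cdot)$). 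Combining the two lemmas, and the fact that the non-empty words are exactly those starting with $a_1$ together with those starting with $a_2$, gives the displayed equivalence.

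Everything in this section beyond the routine unfolding bookkeeping is standard; the genuine obstacle in the whole argument is the $\LL$-version of Safiullin's theorem, on which the elimination of $\wedge$ depends, and which is deferred to the technically demanding~\ref{S:Safiullin}.
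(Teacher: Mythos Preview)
Your argument is correct, but it follows a different route from the paper's. The paper does not reduce from {\sc total}$^+$ directly; instead it introduces an auxiliary $\Pi_1^0$-hard problem {\sc alt}$_2$ (``the language contains all words of the form $a_1\ldots a_2$'') and reduces it to derivability of the single sequent $(A_1^+ \cdot A_2^+)^+ \to H$, whose instances are exactly the sequents $A_1^{n_1},A_2^{m_1},\ldots,A_1^{n_k},A_2^{m_k}\to H$. Your construction stays with {\sc total}$^+$, splits the non-empty words into those beginning with $a_1$ and those beginning with $a_2$, encodes each half by a $*$-external sequent $B_i\to H$, and then packages the conjunction of the two halves via the empty-antecedent product trick $\Lambda\to(H\SL B_1)\cdot(H\SL B_2)$; the cut-free analysis of that sequent (last rule must be $(\to\cdot)$, since the antecedent is empty and the succedent is a product) is sound. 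Both routes rely on Theorem~\ref{Th:SafiullinE} in exactly the same way. The paper's choice has one concrete payoff: its target sequent is $*$-external in the sense of~\ref{S:compl} (no $\cdot$ or $\KStar$ under a division), so the very same reduction immediately gives $\Pi_1^0$-hardness of the $*$-external fragment and hence of $\ThM(\LMod)$, $\ThM(\RMod)$, $\ThM(\REGLANMod)$; your sequent places $B_i$ (which contains $\cdot$ and $\KStar$) under $\SL$, so it would not serve that later purpose without modification.
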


The upper bound follows from Palka's result by conservativity. 

For the lower bound ($\Pi_1^0$-hardness) we use the following {\em alternation}
problem, denoted by {\sc alt}$_2$, instead of {\sc total}$^+$. A context-free grammar over a two-letter alphabet $\{a_1, a_2\}$ belongs to
{\sc alt}$_2$, if the language it generates  includes all words beginning with $a_1$ and ending with $a_2$. (Other words can also belong to
this language.) The alternation problem {\sc alt}$_2$ is also $\Pi_1^0$-hard, by the following reduction from {\sc total}$^+$: take a context-free
grammar with starting symbol $S$ and append a new starting symbol $S'$ with rules $S' \Rightarrow a_1 S a_2$ and $S' \Rightarrow a_1 a_2$; 
the new grammar belongs to {\sc alt}$_2$ if and only if the original grammar belongs to {\sc total}$^+$.

Now we translate our context-free grammar to a Lambek grammar, as Busz\-kow\-ski does. It is easy to see 
that the grammar belongs to {\sc alt}$_2$ if and only if
the sequent $(A_1^+ \cdot A_2^+)^+ \to H$ is derivable in $\ACTomega$:
\begin{lem}\label{Lm:externalsimple}
The sequent $(A_1^+ \cdot A_2^+)^+ \to H$ is derivable in $\ACTomega$ if and only if so are
all sequents $A_1^{n_1}, A_2^{m_1}, A_1^{n_2}, A_2^{m_2}, \ldots, A_1^{n_k}, A_2^{n_k} \to H$ for
any $k$, $n_1$, $m_1$, $n_2$, $m_2$, \ldots, $n_k$, $m_k \ge 1$.
\end{lem}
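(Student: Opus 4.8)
The plan is to prove the two directions separately, using the $\omega$-rule for Kleene plus together with the $(\to{}^+)$ rule in both directions, exactly as one unfolds a starred formula.

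\medskip

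\noindent\textbf{From left to right.} Suppose $(A_1^+ \cdot A_2^+)^+ \to H$ is derivable in $\ACTomega$. Fix arbitrary $k \ge 1$ and $n_1, m_1, \ldots, n_k, m_k \ge 1$. I want to derive $A_1^{n_1}, A_2^{m_1}, \ldots, A_1^{n_k}, A_2^{m_k} \to H$. First, by $k$ applications of $(\to{}^+)$ for Kleene plus (applied to $A_1$ and to $A_2$ with the appropriate numbers of copies) together with $(\to\cdot)$, I can derive $A_1^{n_i}, A_2^{m_i} \to A_1^+ \cdot A_2^+$ for each $i$. Then one application of $(\to{}^+)$ with $k$ premises gives $A_1^{n_1}, A_2^{m_1}, \ldots, A_1^{n_k}, A_2^{m_k} \to (A_1^+ \cdot A_2^+)^+$. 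Finally, cut with the assumed derivation of $(A_1^+ \cdot A_2^+)^+ \to H$ yields the desired sequent. (Cut is admissible in $\ACTomega$ by Palka's cut-elimination, but here we may simply use the $(\CUT)$ rule, which is present in the calculus.)

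\medskip

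\noindent\textbf{From right to left.} Suppose all the sequents $A_1^{n_1}, A_2^{m_1}, \ldots, A_1^{n_k}, A_2^{m_k} \to H$ (for all $k \ge 1$ and all $n_i, m_i \ge 1$) are derivable. I must derive $(A_1^+ \cdot A_2^+)^+ \to H$. The natural route is to apply the $\omega$-rule $({}^+\to)_\omega$ to the outermost Kleene plus: it suffices to derive, for every $k \ge 1$, the sequent $(A_1^+ \cdot A_2^+)^{k} \to H$, i.e. $A_1^+ \cdot A_2^+, \ldots, A_1^+ \cdot A_2^+ \to H$ with $k$ factors. Now apply $(\cdot\to)$ to split each product, reducing to $A_1^+, A_2^+, A_1^+, A_2^+, \ldots, A_1^+, A_2^+ \to H$ ($k$ copies of $A_1^+, A_2^+$). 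Then apply the $\omega$-rule $({}^+\to)_\omega$ to each of the $2k$ occurrences of a Kleene-plus formula, one at a time: after processing all of them, the premises that remain are exactly the sequents $A_1^{n_1}, A_2^{m_1}, \ldots, A_1^{n_k}, A_2^{m_k} \to H$ ranging over all $n_i, m_i \ge 1$, which are derivable by hypothesis. This produces a well-founded (infinitely branching, finite-depth) derivation, so the reduction is legitimate in $\ACTomega$.

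\medskip

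\noindent\textbf{Main obstacle.} The only subtlety is bookkeeping in the right-to-left direction: one must be careful that peeling off the $2k$ Kleene-plus connectives with nested $\omega$-rules indeed leaves precisely the claimed family of premises, with the indices $n_i, m_i$ ranging independently over all positive integers, and that the resulting derivation tree is well-founded (each $\omega$-rule adds one level, and there are finitely many of them before we reach the hypotheses). A clean way to package this is to first establish, as an auxiliary observation, that $\Gamma, A^+, \Delta \to C$ is derivable iff $\Gamma, A^n, \Delta \to C$ is derivable for all $n \ge 1$ (immediate from $({}^+\to)_\omega$ and $(\to{}^+)_1$ plus cut), and then iterate this observation $2k$ times together with the analogous characterisation of the outer Kleene plus and the invertibility of $(\cdot\to)$. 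I expect no genuine difficulty beyond this组合的 unwinding.
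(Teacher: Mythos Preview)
Your proposal is correct and follows essentially the same approach as the paper: the ``if'' direction applies $(\cdot\to)$ and the $\omega$-rule directly, and the ``only if'' direction uses cut with instances of $(\to\cdot)$ and $(\to{}^+)$, which is precisely what the paper packages as invertibility of $(\cdot\to)$ and of the $\omega$-rule. Your closing ``auxiliary observation'' is exactly the invertibility lemma the paper states and proves first.
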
 
\begin{proof}
By cut, one can easily establish that the $\omega$-rule is invertible:
$$
\infer[(\CUT)]{\Gamma, A^n, \Delta \to C}{\infer[(\to\KStar)_n]{A^n \to A^*}{A \to A & \ldots & A \to A} & \Gamma, A^*, \Delta \to C}
$$
and so is $(\cdot\to)$:
$$
\infer[(\CUT)]{\Gamma, A, B, \Delta \to C}{\infer[(\to\cdot)]{A, B \to A \cdot B}{A \to A & B \to B} & \Gamma, A \cdot B, \Delta \to C}
$$
Now the ``if'' part goes by direct application of $(\cdot\to)$ and $(\KStar\to)_\omega$ and the ``only if'' one by their inversion.
\end{proof}

Now our proof of Theorem~\ref{Th:main} will be finished if we manage to formulate $A_1$ and $A_2$ without
$\wedge$. Recall that $A_i = A_{i,1} \wedge \ldots \wedge A_{i,{n_i}}$, where $A_{i,1}, \ldots, A_{i,{n_i}}$ are the formulae which are in the $\rhd$
correspondence with $a_i$. For a Lambek grammar with unique type assignment we have $n_i = 1$, and $A_i$ does not contain $\wedge$. Thus, Theorem~\ref{Th:main} now follows
from the fact that any context-free grammar without $\varepsilon$-rules can be equivalently transformed to a Lambek grammar with unique type assignment, without
Lambek's restriction. Indeed, our language belongs to {\sc alt}$_2$ if and only if any word of the form $a_1^{n_1} a_2^{m_1} \ldots a_1^{n_k} a_2^{n_k}$, for arbitrary
$k$, $n_1$, $m_1$, $n_2$, $m_2$, \ldots, $n_k$, $m_k \ge 1$, belong to the language. By definition of Lambek grammar with unique type assignment, 
this happens exactly when all sequents $A_1^{n_1}, A_2^{m_1}, A_1^{n_2}, A_2^{m_2}, \ldots, A_1^{n_k}, A_2^{n_k} \to H$ are derivable in $\LL$, and, by conservativity,
in $\ACTomega$.

In other words, we need an $\LL$-variant of Safiullin's Theorem~\ref{Th:Safiullin}, which we are going to prove in the next section
(Theorem~\ref{Th:SafiullinE}).

\section{Safiullin's Construction for $\LL$}\label{S:Safiullin}
In this section, we consider only the pro\-duct-free fragment $\LL(\BS,\SL)$. By ``\,$\vdash \Pi \to B$\,'' we mean that
$\Pi \to B$ is derivable in this calculus. By an $\LL(\BS,\SL)$-grammar we mean a Lambek grammar without Lambek's
restriction, where all formulae do not include the multiplication operation.

\begin{thm}\label{Th:SafiullinE}
Any context-free grammar without $\varepsilon$-rules can be algorithmically transformed to an equivalent
$\LL(\BS,\SL)$-grammar with
unique type assignment.
\end{thm}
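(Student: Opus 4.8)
The plan is to follow Safiullin's strategy but to recover the effect of Lambek's restriction \emph{internally}, by coding, rather than relying on it as a structural property of the calculus. Recall that Safiullin's construction takes a context-free grammar, puts it in a suitable normal form (Greibach-like, with productions of the shape $A \Rightarrow aB_1\dots B_k$), and then assigns to each terminal $a$ a single Lambek type built from fresh primitive variables that simulate the non-terminals as a ``stack'' read left-to-right; the key technical device is a family of auxiliary variables and carefully nested implications that force a derivation of $A_1,\dots,A_n \to H$ to mimic exactly a leftmost derivation of $a_1\dots a_n$. The obstacle in transferring this to $\LL(\BS,\SL)$ is that, without Lambek's restriction, extra ``parasitic'' derivations appear: subformulae of the form $(p \BS p)$ become derivable from the empty antecedent, so a type assigned to a letter may be consumed or produced ``for free,'' and the tight correspondence with leftmost derivations breaks. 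I would first isolate exactly which new sequents become derivable once empty antecedents are allowed --- these are controlled by the fact that $\vdash \Lambda \to C$ in $\LL(\BS,\SL)$ iff $C$ is (provably equivalent to) an iterated implication $D_1\BS(D_2\BS\cdots)$ with matching structure --- and then design the type assignment so that every primitive variable occurring in the construction is ``grounded'': it occurs in the goal type $H$ or in some assigned type in a polarity pattern that makes $\Lambda \to C$ underivable for every relevant subformula $C$.

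The concrete steps, in order, are as follows. First I would fix a normal form for the input context-free grammar (no $\varepsilon$-rules, binary or Greibach form) so that productions have a uniform shape amenable to coding. Second, I would recall/restate Safiullin's type assignment and its correctness invariants in the setting \emph{with} Lambek's restriction, identifying the ``stack discipline'' invariant: in any cut-free derivation of $A_1,\dots,A_n\to H$, reading off the principal formulae top-down reconstructs a leftmost derivation. Third --- the heart of the argument --- I would modify the assignment by adding one or two ``guard'' primitive variables, say a left marker and a right marker, wrapped around the types so that the goal type $H$ and every assigned type carry these markers in fixed polarities; the point is to block any sub-derivation with empty antecedent, because such a sub-derivation would need to produce a marker variable out of nothing, which is impossible (a variable $p$ with $\vdash\Lambda\to C$ forces $p$ to occur both positively and negatively inside $C$ in a balanced way, which the guards prevent). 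Fourth, I would prove the two inclusions: (a) if the context-free grammar derives $w$, then the corresponding sequent is derivable in $\LL(\BS,\SL)$ (this direction is essentially unchanged from Safiullin, since adding empty antecedents only adds derivability); (b) conversely, if the sequent is derivable in $\LL(\BS,\SL)$, then by cut elimination take a cut-free derivation, observe via the guard variables that no node has an empty antecedent, hence the derivation is in fact a legal $\mathbf{L}$-derivation, and Safiullin's original analysis applies to extract a leftmost derivation of $w$.

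The main obstacle I expect is step three combined with step (b): making the guard mechanism strong enough to rule out \emph{all} empty-antecedent sub-derivations without at the same time destroying the intended derivations or blowing up the type complexity. In particular one must check that the markers do not interfere with the places where Safiullin's construction genuinely uses $\BS$ and $\SL$ nesting to pass the ``stack,'' and that the polarity bookkeeping is consistent globally across all assigned types and the goal type simultaneously. A secondary technical point is verifying the normal-form reduction preserves the language exactly and is polynomial (or at least computable), but that is routine. Once the no-empty-antecedent lemma is in place, the equivalence of the two grammars reduces to Safiullin's theorem applied verbatim to the cut-free derivations, since on sequents with all antecedents non-empty the calculi $\LL(\BS,\SL)$ and $\mathbf{L}(\BS,\SL)$ coincide.
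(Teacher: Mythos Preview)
Your high-level strategy---modify Safiullin's construction so that it survives the passage from $\mathbf{L}$ to $\LL$---is the right one, and the paper follows it too. But the concrete plan you sketch has a real gap, and it is precisely at the point you flag as ``the main obstacle.''

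The reduction you propose in step (b) is: take a cut-free $\LL$-derivation, argue via guard variables that no node has an empty antecedent, conclude that the derivation is already an $\mathbf{L}$-derivation, and invoke Safiullin verbatim. For this to work you need that \emph{every} subformula $C$ that can occur as a succedent in a cut-free derivation of the goal sequent satisfies $\not\vdash_{\LL}\Lambda\to C$. Your polarity argument (an unbalanced guard forces $\nrm{C}\ne\varepsilon$, hence $\not\vdash\Lambda\to C$) is sound for individual formulae, but it has to hold simultaneously for all such $C$, and this is where the plan breaks. Safiullin's construction uses $p\SL p$ as a \emph{sentinel}: its job in the backward analysis is exactly to appear as a succedent $\Psi\to p\SL p$ and force $\Psi$ to be trivial. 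Any formula playing that r\^{o}le must be zero-balanced (it is compared against copies of itself), so no polarity-based guard can protect it; and as soon as $\vdash\Lambda\to\text{sentinel}$ the whole separation mechanism collapses. Wrapping the outer types in markers does not touch this inner subformula. So the claim ``no node has an empty antecedent'' is not achievable by external guards alone---you would have to redesign the sentinel itself, and once you do that you are no longer reducing to Safiullin as a black box.

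This is exactly what the paper does, and it is worth seeing the difference. Rather than trying to exclude empty antecedents globally, the paper replaces $p\SL p$ by a \emph{raised} sentinel $S_{p,q,r}=(r\SL(p\BS r))\SL(q\SL(p\BS q))$, which is still zero-balanced (so the joining-formula machinery of Pentus still applies) but now satisfies $\not\vdash\Lambda\to S_{p,q,r}$ and, more delicately, $\not\vdash S_{p,q,r},S_{p,q,r}\to S_{p,q,r}$ (Lemma~\ref{Lm:sentinel}). The backward analysis is then redone directly in $\LL$, with an extra lemma (Lemma~\ref{Lm:back_empty}) that explicitly handles the case $\Pi=\Lambda$ rather than excluding it. In other words, empty antecedents are not prevented---they are confronted and shown to lead nowhere harmful. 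Your proposal would become correct if you replaced ``add guards and reduce to Safiullin'' by ``redesign the sentinel so that it is not derivable from $\Lambda$ yet remains zero-balanced, and redo the derivation analysis in $\LL$''; but that is a substantially different and harder argument than the one you outlined.
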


Before presenting the construction of the grammar itself, we introduce
Safiullin's technique of proof analysis for the product-free Lambek calculus, adapted for the case without
Lambek's restriction. This technique has something in common with proof 
nets and focusing %~\citep{Girard1987,DanosRegnier1989,Abrusci1991JSL,Roorda1991,PentusFmonov,Moot2002} and 
%focusing~\citep{Andreoli1992,Polakow2000,MillerSaurin2007,SimmonsPfenning2011,KKNS2018IJCAR}
 for non-commutative linear logic; however, due to the simplicity of $\LL(\BS,\SL)$, this technique
works directly with Gentzen-style cut-free derivations.

We start with a simple and well-known fact:

\begin{lem}\label{Lm:reverse}
The $(\to\BS)$ and $(\to\SL)$ rules are reversible, i.e., if $\vdash \Pi \to A \BS B$, then $\vdash A,\Pi \to B$,
and if $\vdash \Pi \to B \SL A$, then $\vdash \Pi,A \to B$.
\end{lem}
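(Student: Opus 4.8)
The plan is to derive the lemma straight from the $(\CUT)$ rule, which belongs to $\LL(\BS,\SL)$. I treat the first claim; the second is obtained by the left--right mirror of the same argument. First I would note that the sequent $A, A \BS B \to B$ is derivable: apply $(\BS\to)$ to the axioms $A \to A$ and $B \to B$ with empty side contexts $\Gamma$ and $\Delta$. Then, given a derivation of $\Pi \to A \BS B$, cut it against this sequent along the cut formula $A \BS B$:
\[
\infer[(\CUT)]{A, \Pi \to B}{\Pi \to A \BS B & A, A \BS B \to B}
\]
which is exactly $\vdash A, \Pi \to B$. Symmetrically, $B \SL A, A \to B$ is derivable by $(\SL\to)$ from $A \to A$ and $B \to B$, and cutting a derivation of $\Pi \to B \SL A$ against it along $B \SL A$ gives $\vdash \Pi, A \to B$.

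Since $\LL$ admits cut elimination (it is a fragment of $\ACTomega$, so Palka's result applies), this argument in fact shows that \emph{cut-free} derivability is preserved under inverting $(\to\BS)$ and $(\to\SL)$, which is the form in which the lemma will later be used. If one wishes to avoid any appeal to cut, the alternative is a routine induction on the height of a cut-free derivation of $\Pi \to A \BS B$: if the last rule is $(\to\BS)$ its premise is already $A, \Pi \to B$; if the derivation is the axiom $A \BS B \to A \BS B$ then $A, A \BS B \to B$ is built as above; and if the last rule is a left rule acting inside $\Pi$, then $A \BS B$ occurs only passively in the succedent of the relevant premise, so the induction hypothesis applies to that premise and the same left rule is reapplied with $A$ prepended to the antecedent. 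There is no real obstacle here; the only points to watch are the bookkeeping of the context split $\Gamma, \Pi, \Delta$ in the statements of $(\CUT)$ and of the left rules, and the fact that $A$ may be a compound formula, which is harmless because the axiom scheme $A \to A$ is available for arbitrary formulae.
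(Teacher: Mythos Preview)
Your argument is correct and matches the paper's own proof exactly: derive $A, A \BS B \to B$ (resp.\ $B \SL A, A \to B$) from the axioms by one application of the left rule, then cut against $\Pi \to A \BS B$ (resp.\ $\Pi \to B \SL A$), appealing to cut elimination if a cut-free derivation is wanted. The additional inductive alternative you sketch is fine but unnecessary here.
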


\begin{proof}
 Derivability is established using cut (and then, if we want a cut-free derivation, eliminate cut).
 $$
 \infer[(\CUT)]{A, \Pi \to B}{\Pi \to A \BS B & \infer[(\BS\to)]{A, A \BS B \to B}{A \to A & B \to B}}
 \qquad
 \infer[(\CUT)]{\Pi, A \to B}{\Pi \to B \SL A & \infer[(\SL\to)]{B \SL A, A \to B}{A \to A & B \to B}}
 $$
\end{proof}

\begin{definition}
For any $\LL(\BS,\SL)$-formula let its {\em top} be a variable occurrence defined recursively as follows:
\begin{enumerate}
\item the top of a variable is this variable occurrence itself;
\item the top of $(A \BS B)$ and, symmetrically, of $(B \SL A)$ is the top of $B$.
\end{enumerate}
\end{definition}

For convenience we consider cut-free derivations with axioms of the form $q \to q$, where $q$ is a variable.
Axioms $A \to A$ with complex formulae $A$ are derivable (induction on $A$).

\begin{definition}
In a cut-free derivation of $\Pi \to q$, the {\em principal occurrence}\footnote{In proof nets, the principal occurrence
is the one connected by an axiom link to the $q$ in the succedent.} of $q$ in $\Pi$ is the one that
comes from the same axiom as the $q$ in the succedent. We denote the principal occurrence by
$\underline{q}$.
\end{definition}

Notice that the notion of principal occurrence depends on a concrete derivation, not just on the fact
of derivability of $\Pi \to q$.

\begin{lem}\label{Lm:top}
The principal occurrence is always a top.
\end{lem}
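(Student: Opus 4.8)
The plan is to induct on the structure of a cut-free derivation of $\Pi \to q$, tracking how the principal occurrence $\underline{q}$ is introduced and passed down. Since we work with atomic axioms $q' \to q'$, every cut-free derivation has at its leaves axioms of this shape, and the principal occurrence of the succedent is by definition the one linked (through the bottom-up trace) to one such axiom $q \to q$. In that axiom, the antecedent formula is literally the variable $q$, which is trivially its own top. So the base case holds, and the work is to check that each inference rule preserves the invariant ``the principal occurrence is a top of the (sub)formula it occurs in, and moreover the traced antecedent position is the top of the whole antecedent formula containing it.''

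First I would go through the right rules. For $(\to\BS)$, the derivation of $\Pi \to A \BS B$ comes from $A, \Pi \to B$; here the succedent variable is the top of $B$, and the principal occurrence sits somewhere in $A, \Pi$. If it lies inside $\Pi$, the induction hypothesis applies unchanged; if it lies inside the newly exposed $A$ on the left — this cannot be the principal occurrence of the original sequent, since in $\Pi \to A\BS B$ the formula $A\BS B$ is in the succedent, not the antecedent, so there is nothing to prove about it. Actually the cleaner bookkeeping is: the principal occurrence of the \emph{conclusion} sequent is defined relative to that conclusion, and one shows that it descends from the principal occurrence of exactly one premise, the top variable of whose succedent it matches. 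The symmetric rule $(\to\SL)$ is identical. For $(\BS\to)$ with conclusion $\Gamma, \Pi, A\BS B, \Delta \to C$ from premises $\Pi \to A$ and $\Gamma, B, \Delta \to C$: the succedent is $C$, unchanged, so the principal occurrence comes from the right premise. By IH it is the top of whichever antecedent formula of $\Gamma, B, \Delta$ it belongs to. If that formula is inside $\Gamma$ or $\Delta$, it stays a top in the conclusion. If it is the distinguished $B$, then the principal occurrence is the top of $B$; but by clause~(2) of the definition, the top of $B$ is exactly the top of $A \BS B$, so the principal occurrence is the top of the antecedent formula $A\BS B$ in the conclusion — which is what we need. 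The rule $(\SL\to)$ is symmetric. There are no other rules in $\LL(\BS,\SL)$ (no product, no $\cdot\to$, no additives), so this exhausts the cases.

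The one genuinely delicate point — and the main obstacle — is making the trace of the principal occurrence across $(\BS\to)$ and $(\SL\to)$ precise when the active formula $A\BS B$ is built from the principal occurrence's formula only at the \emph{last} step, i.e. when the principal $\underline{q}$ first appears as the top of $B$ and then this $B$ gets absorbed into $A\BS B$. I have to be careful that this is the only way a non-variable can carry the principal occurrence at its top: the top of a left division $A\BS B$ is defined to be the top of $B$, recursively, so by an inner induction on the formula the principal occurrence, being a single variable occurrence that is a top, remains a top under every layer of $\BS$ or $\SL$ wrapping on the right. Concretely I would phrase the lemma's invariant as: in the cut-free derivation, if a leaf axiom $q \to q$ is the one from which the succedent of the endsequent descends, then at every sequent $\Sigma \to r$ on the branch from that leaf to the root, the descendant of that left $q$ is the top of the antecedent formula of $\Sigma$ containing it. The base case is the axiom itself; each downward step is one of the four rules above, and the verification is the short case analysis just sketched. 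Specializing to the root gives the statement of the lemma.

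I expect no surprises in the routine cases; the only care needed is the formula-level induction showing ``top of $B$ $=$ top of $A\BS B$'' propagates correctly and that weakening-free, permutation-free, contraction-free structure of $\LL(\BS,\SL)$ means the principal occurrence is genuinely unique and genuinely traced to a single axiom — which is exactly why we restricted to atomic axioms at the outset.
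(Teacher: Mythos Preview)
Your proposal is correct and follows exactly the paper's approach: induction on the derivation, which the paper states in a single line. One small simplification you could note is that along the branch from the axiom $q\to q$ to the endsequent $\Pi\to q$ the succedent is always the variable $q$, so the right rules $(\to\BS)$ and $(\to\SL)$ never occur on that branch and your discussion of them is vacuous; only the $(\BS\to)$ and $(\SL\to)$ cases matter, and there your argument (top of $B$ equals top of $A\BS B$) is exactly the point.
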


\begin{proof} Induction on derivation. \end{proof}

Using this lemma, one can locate possible principal occurrences by searching for tops with the same variable as the succedent (which has been reduced to
its top variable by Lemma~\ref{Lm:reverse}).

%\clearpage
Introduce the following shortcut for {\em curried} passing of denominators: if $\Gamma = X_1, \ldots, X_n$ and $\Delta = Y_1, \dots, Y_m$ are
sequences of formulae, then let
$$
\Gamma \BS q \SL \Delta = X_n \BS \ldots \BS X_1 \BS q \SL Y_m \SL \ldots \SL Y_1.
$$
($\SL$ associates to the left and $\BS$ associates to the right).
If $\Gamma$ or $\Delta$ is empty, we just omit the corresponding divisions:
$$
\Lambda \BS q \SL \Delta = q \SL Y_m \SL \ldots \SL Y_1;
$$
$$
\Gamma \BS q \SL \Lambda = X_n \BS \ldots \BS X_1 \BS q;
$$
$$
\Lambda \BS q \SL \Lambda = q.
$$
(Since $(A \BS B) \SL C$ and $A \BS (B \SL C)$ are equivalent, we do not need parentheses here.)

Next follows the decomposition lemma, which allows reverse-engineering of $(\SL\to)$ and $(\BS\to)$, once we have
located the principal occurrence.

\begin{lem}\label{Lm:decomposition}
If $\vdash \Phi, (X_1, \ldots, X_n) \BS \underline{q} \SL (Y_1, \dots, Y_m), \Psi \to q$, then
$\Phi = \Phi_1, \dots, \Phi_n$ (some of $\Phi_i$ may be empty), $\vdash \Phi_i \to X_i$ for any $i = 1, \ldots, n$;
$\Psi = \Psi_1, \dots, \Psi_m$ (some of $\Psi_j$ may be empty), $\vdash \Psi_j \to Y_j$ for any $j = 1, \ldots, m$.
\end{lem}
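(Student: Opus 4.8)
The plan is to prove Lemma~\ref{Lm:decomposition} by induction on a fixed cut-free derivation $\mathcal{D}$ of the sequent $\Phi, F, \Psi \to q$, where $F = (X_1,\ldots,X_n)\BS q \SL (Y_1,\ldots,Y_m)$ and the displayed occurrence $\underline{q}$, i.e., the top of $F$, is the principal occurrence in $\mathcal{D}$. I fix once and for all the parse of $F$ in which all $\BS$ are outermost and all $\SL$ innermost, so that the outermost connective of $F$ is $\BS$ when $n \ge 1$, is $\SL$ when $n = 0 < m$, and $F = q$ when $n = m = 0$; the general case follows from this one via the equivalence $(A\BS B)\SL C \leftrightarrow A\BS(B\SL C)$ noted above.

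The first step is the analysis of the last rule of $\mathcal{D}$. Since the succedent is the variable $q$, this rule is neither $(\to\BS)$ nor $(\to\SL)$, hence it is an axiom or one of $(\BS\to)$, $(\SL\to)$. If $\mathcal{D}$ is the axiom $q \to q$, then $\Phi = \Psi = \Lambda$ and $F = q$ (so $n = m = 0$) and there is nothing to prove. Otherwise the last rule has an active formula $Z$, which must be a top-level formula of the antecedent $\Phi, F, \Psi$ (a proper subformula of $F$ is never active), so $Z$ is a formula of $\Phi$, a formula of $\Psi$, or $F$ itself. Before treating these cases I would isolate the structural fact that drives the argument: $F$ is never displaced into the left premise of an $(\BS\to)$ or $(\SL\to)$ rule. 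Indeed, the succedent $q$ always descends into the right premise of such a rule, so the axiom to which $\underline{q}$ is linked lies in the right subderivation; were $F$, which contains $\underline{q}$, moved into the left premise $\Pi \to A$, the occurrence $\underline{q}$ would be linked to an axiom in the left subderivation, contradicting principality. Consequently $F$ either is the active formula or descends intact into the right premise, and in particular $F$ is never part of the auxiliary antecedent $\Pi$ of an $(\BS\to)$ or $(\SL\to)$ rule.

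Now the cases. If $Z = F$, then $F$'s outermost connective is a division; if it is $\BS$ (so $n \ge 1$), the last rule is $(\BS\to)$ with $A = X_n$ and $B = (X_1,\ldots,X_{n-1})\BS q \SL (Y_1,\ldots,Y_m)$, matching the conclusion yields a split $\Phi = \Gamma, \Pi$ with left premise $\Pi \to X_n$ and right premise $\Gamma, B, \Psi \to q$ in which the top of $B$ (= the top of $F$) is still principal; the induction hypothesis applied to the right premise gives $\Gamma = \Phi_1,\ldots,\Phi_{n-1}$ with $\vdash \Phi_i \to X_i$ and $\Psi = \Psi_1,\ldots,\Psi_m$ with $\vdash \Psi_j \to Y_j$, and setting $\Phi_n := \Pi$ finishes the case; the subcase where the outermost connective of $F$ is $\SL$ (so $n = 0 < m$) is symmetric, peeling $Y_1$ off by $(\SL\to)$. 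If $Z$ is a formula of $\Phi$ (the case of $\Psi$ is symmetric), then by the structural fact above the auxiliary antecedent $\Pi$ of the rule lies entirely within $\Phi$, and the right premise is $\Phi^{\sharp}, F, \Psi \to q$, where $\Phi^{\sharp}$ is obtained from $\Phi$ by replacing $Z$ together with $\Pi$ by the single formula $B$ (the minor premise being $\Pi \to A$); the top of $F$ is still principal, so the induction hypothesis gives $\Phi^{\sharp} = \Phi_1,\ldots,\Phi_n$ with $\vdash \Phi_i \to X_i$ and $\Psi = \Psi_1,\ldots,\Psi_m$ with $\vdash \Psi_j \to Y_j$; since $B$ lies in exactly one block $\Phi_{i_0} = \Sigma', B, \Sigma''$, one reinstates $Z$ and $\Pi$ inside that block, and one backward application of $(\BS\to)$ (resp.\ $(\SL\to)$) to $\vdash \Sigma', B, \Sigma'' \to X_{i_0}$ and $\vdash \Pi \to A$ produces $\vdash \Phi_{i_0}^{\mathrm{new}} \to X_{i_0}$, yielding the required splitting of the original $\Phi$. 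When $n = m = 0$ this last case cannot occur, since the block containing $B$ would have to be empty; so in that situation $\mathcal{D}$ must end in the axiom, re-confirming $\Phi = \Psi = \Lambda$.

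I expect the main obstacle to be exactly the structural fact that $F$ stays on the branch carrying the succedent, together with the bookkeeping in the last case: one has to check that an $(\BS\to)$ or $(\SL\to)$ acting inside $\Phi$ or $\Psi$ has an auxiliary antecedent not straddling $F$, and that re-folding $\Pi$ and $Z$ back into a single block $\Phi_{i_0}$ is legitimate and restores the original antecedent. Once the canonical parse of $F$ is fixed, everything else is routine case analysis on the last rule.
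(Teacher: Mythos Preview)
Your argument is correct and follows exactly the approach the paper indicates: induction on the cut-free derivation, with the key structural observation that the principal occurrence $\underline{q}$ (hence the formula $F$ carrying it) always passes to the right premise of $(\BS\to)$ and $(\SL\to)$. The paper's own proof is a two-line sketch stating precisely this; you have carried out the full case analysis that the sketch leaves implicit. One cosmetic remark: fixing a canonical parse of $F$ is unnecessary, since any admissible parenthesization of the curried form has outermost connective either $\BS$ with left argument $X_n$ or $\SL$ with right argument $Y_1$, so the decomposition step works uniformly; and in the last case the application of $(\BS\to)$/$(\SL\to)$ to rebuild $\Phi_{i_0}$ is a \emph{forward} application, not a backward one.
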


\begin{proof}
Induction on derivation. By definition of principal occurrence, $\underline{q}$ always goes to the right branch in
applications of $(\SL\to)$ and $(\BS\to)$.
\end{proof}

In particular, Lemma~\ref{Lm:decomposition} yields the following corollary: if the principal occurrence is the top of a formula
of the form $p \SL \Gamma$, then it should be the leftmost formula in the antecedent.

The next ingredient of Safiullin's construction is the {\bf sentinel} formula. This formula is used
to delimit parts of the sequent and force them to behave independently. Safiullin uses a very simple
formula, $p \SL p$, as a sentinel. Without Lambek's restriction, however, this will not work, because
$\Lambda \to p \SL p$ is now derivable, making the sentinel practically useless. We use a more
complicated sentinel, using the technique of {\em raising}. A formula $A$ raised using $q$ is
$q \SL (A \BS q)$. Our sentinel is as follows: 
$$S_{p,q,r} = (r \SL (p \BS r)) \SL (q \SL (p \BS q)).$$
Variables $p$, $q$, and $r$ are parameters of the sentinel. We shall take fresh variables for them.

The top of $S_{p,q,r}$ is $r$. In the notation of Lemma~\ref{Lm:decomposition}, $S_{p,q,r} = r \SL (q \SL (p \BS q), p \BS r)$, thus, it is of the form $r \SL \Gamma$.

The following lemma trivialises the analysis of sequents of the form $S_{p,q,r}, \Pi \to S_{p,q,r}$, if the 
principal occurrence happens to be the top of the leftmost $S_{p,q,r}$.

\begin{lem}\label{Lm:sentinel_right}
If $\Pi$ does not have $p$ or $q$ in tops (in particular, $\Pi$ could include $S_{p,q,r}$, since the top of the sentinel is $r$) and $$\vdash \underline{r} \SL (q \SL (p \BS q), p \BS r), \Pi, q \SL (p \BS q), p \BS r \to r,$$ then 
$\Pi = \Lambda$.
\end{lem}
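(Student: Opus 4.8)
The plan is to apply Lemma~\ref{Lm:decomposition} to the given sequent, taking the leftmost sentinel $\underline{r} \SL (q \SL (p \BS q), p \BS r)$ as the formula carrying the principal occurrence (which it does, by hypothesis). Since nothing stands to its left, the lemma produces a partition $\Pi, q \SL (p \BS q), p \BS r = \Psi_1, \Psi_2$ with $\vdash \Psi_1 \to q \SL (p \BS q)$ and $\vdash \Psi_2 \to p \BS r$; by Lemma~\ref{Lm:reverse} the first of these is equivalent to $\vdash \Psi_1, p \BS q \to q$. It then suffices to run through the possible positions of the split point separating $\Psi_1$ from $\Psi_2$ and to check that each of them is either impossible or forces $\Pi = \Lambda$.

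I would organise this into three exhaustive cases by where $\Psi_1$ ends. \emph{(i)} If $\Psi_1$ is a prefix of $\Pi$ (possibly empty, possibly all of $\Pi$), then by hypothesis $\Psi_1$ carries no $q$-top, so in $\vdash \Psi_1, p \BS q \to q$ the principal occurrence can only be the top of $p \BS \underline{q}$; a second application of Lemma~\ref{Lm:decomposition} then yields $\vdash \Psi_1 \to p$, which is impossible, since $\Psi_1$ carries no $p$-top either (Lemma~\ref{Lm:top}). \emph{(ii)} If $\Psi_2 = \Lambda$, then $\vdash \Lambda \to p \BS r$, i.e.\ $\vdash p \to r$, which fails because $p \neq r$ (no rule applies to this sequent and it is not an axiom). \emph{(iii)} In the only remaining case, $\Psi_1 = \Pi, q \SL (p \BS q)$ and $\Psi_2 = p \BS r$, so that $\vdash \Pi, q \SL (p \BS q), p \BS q \to q$. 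Here the principal $q$ is a top and $\Pi$ has none, so it is either the top of $q \SL (p \BS q)$ or the top of $p \BS q$. In the latter subcase Lemma~\ref{Lm:decomposition} gives $\vdash \Pi, q \SL (p \BS q) \to p$, impossible since neither $\Pi$ nor $q \SL (p \BS q)$ (whose top is $q$) carries a $p$-top; in the former subcase $q \SL (p \BS q)$ has the curried shape $q \SL \Gamma$, so by the corollary to Lemma~\ref{Lm:decomposition} it must be the leftmost formula of the antecedent, which forces $\Pi = \Lambda$ — exactly the desired conclusion.

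The obstacle I anticipate is purely bookkeeping: verifying that the case analysis on the split point is genuinely exhaustive, and that each nested invocation of Lemma~\ref{Lm:decomposition} is set up with the correct $\Phi$, $\Psi$ and lists of numerators and denominators. The two substantive ideas are both short and both rest squarely on the hypothesis that $\Pi$ has no $p$- or $q$-tops: that a derivation of $q \SL (p \BS q) \to q \SL (p \BS q)$ (modulo the material of $\Pi$) cannot route the principal occurrence through the inner $q$ of $p \BS q$, because that would require a $p$ on the left which $\Pi$ is forbidden from providing; and that a formula of the curried shape $q \SL \Gamma$ that bears the principal occurrence is pinned to the left end of the antecedent. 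No quantitative estimate or combinatorial construction is needed beyond this.
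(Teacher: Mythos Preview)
Your argument is correct and matches the paper's proof essentially step for step: apply Lemma~\ref{Lm:decomposition} to split off $\Psi_1,\Psi_2$, rule out the cases where $\Psi_1\subseteq\Pi$ (no $p$- or $q$-tops) and where $\Psi_2=\Lambda$ (since $\not\vdash p\to r$), and in the remaining case analyse the two candidate principal $q$'s in $\Pi, q\SL(p\BS q), p\BS q\to q$ exactly as you do. The only cosmetic difference is that the paper presents the case split more narratively, but the logical content is identical.
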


\begin{proof}
By Lemma~\ref{Lm:decomposition}, 
\begin{align*}
&\Pi, q \SL (p \BS q), p \BS r = \Psi_1, \Psi_2;\\
&\vdash \Psi_1 \to q \SL (p \BS q);\\
&\vdash \Psi_2 \to p \BS r.
\end{align*}
Inverting $(\to\SL)$ in the first sequent yields $\vdash \Psi_1, p \BS q \to q$.  If $\Psi_1$ does not include $q \SL (p \BS q)$, then the only
principal occurrence is the $q$ in $p \BS q$, and by Lemma~\ref{Lm:decomposition} $\vdash \Psi_1 \to p$, which contradicts Lemma~\ref{Lm:top}:
there are no $p$ tops in $\Psi_1$ (which is a subsequence of $\Pi$).

On the other hand, $\Psi_2 \ne \Lambda$, since $\not\vdash \Lambda \to p \BS r$. Thus, $\Psi_1 = \Pi, q \SL (p \BS q)$ and
$$
\vdash \Pi, \underline{q} \SL (p \BS q), p \BS q \to q \qquad\mbox{or} \qquad
\vdash \Pi, q \SL (p \BS q), p \BS \underline{q} \to q.
$$
In the first case, $\Pi = \Lambda$ by Lemma~\ref{Lm:decomposition}, {\em q.e.d.} In the second case,
$\vdash \Pi, q \SL (p \BS q) \to p$ fails, since there is no $p$ top in the antecedent.
\end{proof}

\begin{lem}\label{Lm:sentinel}
If $\Phi_0$, $\Phi_1$, \ldots, $\Phi_n$ do not have $p$, $q$, or $r$ in tops and $$\vdash \Phi_0, S_{p,q,r}, \Phi_1, S_{p,q,r}, \ldots,
\Phi_{n-1}, S_{p,q,r}, \Phi_n \to S_{p,q,r},$$
then $n = 1$ and all $\Phi_i$ are empty. (In other words, the only derivable sequent of this form is the trivial $S_{p,q,r} \to S_{p,q,r}$.)
In particular, $\not\vdash \Lambda \to S_{p,q,r}$ and $\not\vdash S_{p,q,r}, S_{p,q,r} \to S_{p,q,r}$.
\end{lem}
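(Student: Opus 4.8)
The plan is to follow the route of the proof of Lemma~\ref{Lm:sentinel_right}: reduce the goal to a sequent whose succedent is the single variable $r$, locate the principal occurrence of $r$, and then feed the result into Lemmas~\ref{Lm:decomposition} and~\ref{Lm:sentinel_right}. First I would apply Lemma~\ref{Lm:reverse} twice, inverting $(\to\SL)$ through the two outermost $\SL$'s of the succedent $S_{p,q,r} = (r \SL (p \BS r)) \SL (q \SL (p \BS q))$, to obtain
$$\vdash \Phi_0, S_{p,q,r}, \Phi_1, S_{p,q,r}, \ldots, \Phi_{n-1}, S_{p,q,r}, \Phi_n, q \SL (p \BS q), p \BS r \to r.$$
By Lemma~\ref{Lm:top} the principal occurrence $\underline{r}$ is a top carrying the variable $r$, and the only $r$-tops in this antecedent are the tops of the $n$ copies of $S_{p,q,r}$ and the occurrence of $r$ inside the trailing $p \BS r$ (the formulas $\Phi_i$ have no $r$-top by hypothesis, and $q \SL (p \BS q)$ has top $q$). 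So the argument splits into two cases according to where $\underline{r}$ lies.

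In the first case $\underline{r}$ is the $r$ of the trailing $p \BS r$. Since $p \BS r$ is the last formula of the antecedent, Lemma~\ref{Lm:decomposition} yields derivability of the part preceding it with succedent $p$, namely $\vdash \Phi_0, S_{p,q,r}, \ldots, S_{p,q,r}, \Phi_n, q \SL (p \BS q) \to p$. But that antecedent has no $p$-top, contradicting Lemma~\ref{Lm:top}. Hence this case cannot happen. It is, however, the only possibility when $n = 0$, so this already shows $\not\vdash \Lambda \to S_{p,q,r}$ and, more generally, that the given sequent forces $n \ge 1$.

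In the remaining case $\underline{r}$ is the top of one of the copies of $S_{p,q,r}$. Writing $S_{p,q,r} = r \SL (q \SL (p \BS q), p \BS r)$, this formula has the form $r \SL \Gamma$, so by the corollary to Lemma~\ref{Lm:decomposition} it must be the leftmost formula of the antecedent; thus $\Phi_0 = \Lambda$ and the copy carrying $\underline{r}$ is the first one. The sequent then has exactly the shape required by Lemma~\ref{Lm:sentinel_right}, with $\Pi = \Phi_1, S_{p,q,r}, \Phi_2, \ldots, S_{p,q,r}, \Phi_n$, and $\Pi$ indeed has neither a $p$-top nor a $q$-top (the $\Phi_i$ by hypothesis, the copies of $S_{p,q,r}$ because their top is $r$). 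Applying that lemma gives $\Pi = \Lambda$, which forces $n = 1$ and $\Phi_1 = \Lambda$; together with $\Phi_0 = \Lambda$ this is the assertion. The last two non-derivability claims are just the excluded cases $n = 0$ and $n = 2$.

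The argument is bookkeeping layered over Lemmas~\ref{Lm:top},~\ref{Lm:decomposition} and~\ref{Lm:sentinel_right}, so I do not anticipate a real obstacle; the one spot demanding care is the case split on the position of $\underline{r}$ together with the check, in the first case, that the formula $p$ produced on the right has no matching top in the antecedent --- which is precisely the job of the two occurrences of $p$ built into the sentinel $S_{p,q,r}$.
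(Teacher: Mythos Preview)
Your argument is correct and follows essentially the same route as the paper's own proof: invert $(\to\SL)$ to reduce to succedent $r$, split on whether the principal $r$ is the trailing one in $p \BS r$ (ruled out because the resulting antecedent has no $p$-top) or the top of a sentinel (forcing it leftmost, hence $\Phi_0=\Lambda$, and then Lemma~\ref{Lm:sentinel_right} finishes). The only cosmetic difference is that you spell out explicitly why $n=0$ is excluded, which the paper leaves implicit.
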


\begin{proof}
 Inverting
$(\to\SL)$ by Lemma~\ref{Lm:reverse}, we get
$$
\vdash \Phi_0, S_{p,q,r}, \Phi_1, S_{p,q,r}, \ldots,
\Phi_{n-1}, S_{p,q,r}, \Phi_n, q \SL (p \BS q), p \BS r \to r.
$$
Let us locate the principal occurrence of $r$. This should be a top, therefore it is either the rightmost $r$ in $p \BS r$ or
$r$ in one of the sentinels. In the first case we get, by Lemma~\ref{Lm:decomposition},
$$
\vdash \Phi_0, S_{p,q,r}, \Phi_1, S_{p,q,r}, \ldots,
\Phi_{n-1}, S_{p,q,r}, \Phi_n, q \SL (p \BS q) \to p,
$$
which immediately fails to be derivable, since there is no $p$ top in the antecedent to become the principal occurrence.

In the second case, since all sentinels are of the form $r \SL \Gamma$, the principal occurrence should be the leftmost one.
Thus, $\Phi_0 = \Lambda$ and the principal occurrence is the top of the leftmost $S_{p,q,r}$. (In particular, at least one sentinel should
exist, {\em i.e.,} $n \ge 1$.)
$$
\vdash \underline{r} \SL (q \SL (p \BS q), p \BS r), \Phi_1, S_{p,q,r}, \ldots,
\Phi_{n-1}, S_{p,q,r}, \Phi_n, q \SL (p \BS q), p \BS r \to r.
$$

Now by Lemma~\ref{Lm:sentinel_right} $\Phi_1, S_{p,q,r}, \ldots,
\Phi_{n-1}, S_{p,q,r}, \Phi_n$ should be empty. 
\end{proof}

Next, we are going to need the construction of {\bf joining formulae}~\citep{Pentus1994}. A joining formula for $A_1, \ldots, A_n$ is a formula
$B$ such that $A_i \to B$ is derivable for all $i = 1, \ldots, n$. The joining formula is a substitute for $A_1 \vee \ldots \vee A_n$ in the language
without $\vee$ and $\wedge$. One can also consider a joining formula for a set of sequences of formulae $\Gamma_1, \ldots, \Gamma_n$ as such $B$ that
$\Gamma_i \to B$ is derivable for all $i$.
Due to the substructural nature of the Lambek calculus, a joining formula does not always exist.
For example, two different variables, $p$ and $q$, are not joinable.  Pentus' criterion of joinability is based on the free group interpretation of the
Lambek calculus.

\begin{definition}
Let $\FG$ be the free group generated by $\Var$; $\varepsilon$ (the empty word) is its unit. 
Then $\nrm{A}$, the interpretation of Lambek formula $A$ in $\FG$, is defined recursively as follows:
\begin{itemize}
\item $\nrm{p} = p$ for $p \in \Var$;
\item $\nrm{\One} = \varepsilon$; 
\item $\nrm{A \cdot B} = \nrm{A} \nrm{B}$;
\item $\nrm{A \BS B} = \nrm{A}^{-1} \nrm{B}$;
\item $\nrm{B \SL A} = \nrm{B} \nrm{A}^{-1}$.
\end{itemize}
For a sequence $\Gamma$ of formulae its interpretation in $\FG$ is defined as follows:
\begin{itemize}
\item if $\Gamma = A_1, \ldots, A_n$, then $\nrm{\Gamma} = \nrm{A_1} \ldots \nrm{A_n}$;
\item $\nrm{\Lambda} = \varepsilon$.
\end{itemize}
\end{definition}

One can easily see (by induction on derivation) that if $A \to B$ is derivable, then $\nrm{A} = \nrm{B}$.\footnote{The free group interpretation can be
seen as a special case of the interpretation on residuated monoids, with the equality relation ($=$) taken as the preorder ($\preceq$). From this perspective,
the fact that derivability of $A \to B$ implies $\nrm{A} = \nrm{B}$ follows from the general soundness statement.} Thus,
if there exists a joining formula for a set of formulae (or sequences of formulae), then they should have the same free group interpretation.
In fact, this gives a criterion on the existence of a joining formula.
\begin{thm}\label{Th:join} 
If $\nrm{\Gamma_1} = \nrm{\Gamma_2} = \ldots = \nrm{\Gamma_n}$ then there exists a formula $B$ of the language of $\BS$, $\SL$, such that
$\vdash \Gamma_i \to B$ for all $i=1,\ldots, n$.
\end{thm}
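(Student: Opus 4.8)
The plan is to reduce the general statement to the case $n = 2$ by iteration: if $B_{12}$ joins $\Gamma_1, \Gamma_2$ and $B_{12\ldots k}$ joins $\Gamma_1, \ldots, \Gamma_k$, then since $\nrm{B_{12\ldots k}} = \nrm{\Gamma_1}$ (free group interpretation is preserved by derivability) and $\nrm{\Gamma_{k+1}}$ equals this as well, a binary joining formula for $B_{12\ldots k}$ and $\Gamma_{k+1}$ serves as a joining formula for all of $\Gamma_1, \ldots, \Gamma_{k+1}$. So it suffices to join two sequences $\Gamma$ and $\Delta$ with $\nrm{\Gamma} = \nrm{\Delta}$, and moreover, by collapsing each sequence into a single formula (replace $A_1, \ldots, A_n$ by, say, $A_n \BS \ldots \BS A_1 \BS p \SL \ldots$ — or more cleanly, note a joining formula for the sequences can be obtained from one for products, but as we are in the product-free fragment, work directly with the reversibility of $(\to\SL)$/$(\to\BS)$ to normalise). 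The cleanest route: it suffices to produce, for two formulae (or sequences) $A, B$ with $\nrm{A} = \nrm{B}$, a $C$ with $\vdash A \to C$ and $\vdash B \to C$.

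First I would fix a reduced word $w = x_1^{\epsilon_1} \cdots x_\ell^{\epsilon_\ell}$ (with $x_i \in \Var$, $\epsilon_i \in \{+1,-1\}$) representing the common free-group value $\nrm{\Gamma_1} = \cdots = \nrm{\Gamma_n}$. The key step is to build, for each sequence $\Gamma$ with $\nrm{\Gamma} = w$, a formula $C_w$ depending only on $w$ (not on $\Gamma$) together with a derivation of $\Gamma \to C_w$; then $C_w$ is automatically a joining formula. The natural candidate is to read off $C_w$ from $w$ syllable by syllable, using raising for the negative syllables: roughly, $C_w$ is built so that its free-group interpretation is $w$ and so that it "absorbs" anything with interpretation $w$. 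The construction proceeds by induction on $\ell$ (the length of the reduced word), peeling off $x_\ell^{\epsilon_\ell}$: if $\epsilon_\ell = +1$ one uses a right division $C_w = C_{w'} \cdot x_\ell$ — but again, product-free, so instead one keeps $C_w$ in implicational form and uses the currying shortcut $\Gamma \BS q \SL \Delta$ introduced above. The honest statement is that $C_w$ should be something like an iterated raising/lowering of a fresh variable along the letters of $w$, and the derivation $\Gamma \to C_w$ is obtained by reversibility of the succedent rules (Lemma~\ref{Lm:reverse}) followed by an induction that matches the antecedent against $w$; the matching works precisely because cancellation in the free group corresponds, on the syntactic side, to the interplay of $(\BS\to)$ and $(\SL\to)$.

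The main obstacle I anticipate is handling \emph{cancellation}: a single formula $A$ in $\Gamma$ may have $\nrm{A}$ a word that is \emph{not} a subword of the reduced $w$ — it can contain $x x^{-1}$ pairs that cancel against neighbours. So one cannot simply align the syllables of $w$ with the formulae of $\Gamma$ locally; the joining formula $C_w$ must be robust enough that any $\Gamma$ whose \emph{net} interpretation is $w$, no matter how much internal cancellation occurs, still derives $\Gamma \to C_w$. I expect the fix is to prove a stronger inductive statement: for every \emph{word} $u$ over $\Var^{\pm}$ (not just reduced ones) there is a formula $C_u$ with $\nrm{C_u} = u$ reduced, such that $\vdash \Gamma \to C_u$ whenever $\nrm{\Gamma} = $ (reduction of $u$), and such that $C_u$ depends only on the reduction of $u$ — with the induction now on the length of $u$ \emph{before} reduction, so that when two adjacent syllables cancel we can invoke the induction hypothesis on the shorter word while the "context" divisions around the cancelled pair are discharged by $q \to q$ axioms via Lemma~\ref{Lm:decomposition}. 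Once this stronger claim is set up, the verification of each case is a routine application of the reversibility and decomposition lemmas already proved; the creative content is entirely in choosing the shape of $C_w$ (raising a fresh variable) and in formulating the induction so that cancellation is absorbed rather than obstructing the alignment.
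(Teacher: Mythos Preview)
The paper's proof is a two-line citation: pass to products $G_i$ of the $\Gamma_i$, apply Pentus' joining theorem~[Thm.~1]{Pentus1994} to get a joining formula $F$, then use~[Lm.~13(i)]{Pentus1994} to replace $F$ by a product-free $B$ with $F \to B$, and conclude by cut. No new construction is carried out; the work is delegated to Pentus.

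Your route is genuinely different: you try to build the joining formula directly. The reduction to $n=2$ by iterating binary joins is fine. The trouble is the central idea that there is a formula $C_w$ depending \emph{only on the reduced word $w$} such that $\vdash \Gamma \to C_w$ for every $\Gamma$ with $\nrm{\Gamma} = w$. This is false already for $w = \varepsilon$. Suppose such a $C_\varepsilon$ existed, and pick a variable $p$ not occurring in $C_\varepsilon$. The sequence $p \SL p$ is zero-balanced, so we would need $\vdash p \SL p \to C_\varepsilon$. In any cut-free derivation the rule $(\SL\to)$ must at some point be applied to $p \SL p$, producing a premise $\Pi \to p$ where $\Pi$ consists of subformulae of $C_\varepsilon$ (these are the only formulae that can appear alongside $p \SL p$ as the succedent is unfolded). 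But then $\Pi$ contains no occurrence of $p$, hence no $p$ top, contradicting Lemma~\ref{Lm:top}. So a universal $C_\varepsilon$ does not exist; the joining formula must depend on the particular $\Gamma_i$ being joined, not merely on their common free-group value. Your refined induction (``$C_u$ depends only on the reduction of $u$'') still aims at the same universal object and therefore inherits the same obstruction.

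In short, what you are attempting is a fresh proof of Pentus' joining theorem, which is a substantial combinatorial result whose construction genuinely uses the structure of the input formulae. Your sketch does not supply that construction, and the specific shape you propose for $C_w$ cannot work. Either cite Pentus as the paper does, or be prepared to reproduce his argument (interpolation-style induction on the syntactic structure of the formulae, not on the free-group word).
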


This theorem is an easy corollary of the results of~\citet{Pentus1994}. Return to the calculus with multiplication, $\LL$. In this calculus, for each 
$\Gamma_i = A_{i,1}, \ldots, A_{i,n_i}$ consider its product $G_i  = A_{i,1} \cdot \ldots \cdot A_{i,{n_i}}$. If $\Gamma_i = \Lambda$, take $G_i = q \SL q$
for an arbitrary variable $q$. Sequents $\Gamma_i \to G_i$ are derivable. Then apply the main result from Pentus' article~\citep[Thm.~1]{Pentus1994} which
yields a joining formula $F$ for $\{ G_1, \ldots, G_n \}$. This formula could include the multiplication connective. However, for each formula $F$, possibly
with multiplication, there exists a formula $B$ in the language of $\BS$, $\SL$, such that $F \to B$ is derivable~\citep[Lm.~13(i)]{Pentus1994}.
By cut, we get $\vdash\Gamma_i \to B$.

A formula $A$ is called {\em zero-balanced} if $\nrm{A} = \varepsilon$. By Theorem~\ref{Th:join}, if all formulae in all $\Gamma_i$ are zero-balanced, then 
$\{ \Gamma_1, \ldots, \Gamma_n \}$ has a joining formula $B$.

\begin{lem}\label{Lm:sentinel_balance}
The sentinel formula $S_{p,q,r}$ is zero-balanced.
\end{lem}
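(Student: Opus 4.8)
The statement is a direct computation in $\FG$ from the definition of $\nrm{\cdot}$, and the only structural remark worth isolating is that the \emph{raising} operation is interpretation-preserving: for any formula $A$ and any variable $s$,
$$
\nrm{s \SL (A \BS s)} = \nrm{s} \cdot \bigl(\nrm{A}^{-1}\nrm{s}\bigr)^{-1} = s \cdot s^{-1}\nrm{A} = \nrm{A}.
$$
I would first record this identity, then apply it twice: with $A = p$, $s = q$ it gives $\nrm{q \SL (p \BS q)} = \nrm{p} = p$, and with $A = p$, $s = r$ it gives $\nrm{r \SL (p \BS r)} = \nrm{p} = p$.

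It then remains to combine the two raised subformulae at the outermost $\SL$. Since $S_{p,q,r} = \bigl(r \SL (p \BS r)\bigr) \SL \bigl(q \SL (p \BS q)\bigr)$ and both arguments have free-group image $p$,
$$
\nrm{S_{p,q,r}} = \nrm{r \SL (p \BS r)} \cdot \nrm{q \SL (p \BS q)}^{-1} = p \cdot p^{-1} = \varepsilon,
$$
so $S_{p,q,r}$ is zero-balanced. There is no real obstacle here; the only point to watch is keeping the group-multiplication order straight for the two divisions (recall $\nrm{B \SL A} = \nrm{B}\nrm{A}^{-1}$ whereas $\nrm{A \BS B} = \nrm{A}^{-1}\nrm{B}$), but because the two raised subformulae have identical images the ordering is irrelevant at the final step anyway.
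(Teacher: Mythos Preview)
Your proof is correct and follows essentially the same approach as the paper: first observe that raising preserves the free-group interpretation, then apply this to both raised subformulae (each has image $p$), and conclude $\nrm{S_{p,q,r}} = p\,p^{-1} = \varepsilon$. The paper's argument is identical in structure and computation.
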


\begin{proof}
Raising does not change the free group interpretation of a formula: $$\nrm{q \SL (A \BS q)} = 
\nrm{q} \nrm{A \BS q}^{-1} = q (\nrm{A}^{-1} \nrm{q})^{-1} = q q^{-1} \nrm{A} = \nrm{A}.$$
Now
$$
\nrm{S_{p,q,r}} = \nrm{r \SL (p \BS r)} \nrm{q \SL (p \BS q)}^{-1} = p p^{-1} = \varepsilon.
$$
\end{proof}

Now consider a set $\Uc = \{ A_1, \dots, A_n \}$ of zero-balance formulae and let $u, t, v, w, s$ be fresh variables, not occurring in $A_i$.
Consider the following two sets of sequences of formulae:
$$
\bigl\{ E_{i+} = S_{t,v,w}, A_i, S_{t,v,w}, A_{i+1}, S_{t,v,w}, \ldots, S_{t,v,w}, A_n, S_{t,v,w}  \mid i = 1, \ldots, n \bigr\};
$$
$$
\bigr\{ E_{i-} = S_{t,v,w}, A_1, S_{t,v,w}, A_2, S_{t,v,w}, \ldots, S_{t,v,w}, A_i, S_{t,v,w} \mid i = 1, \ldots, n \bigr\}.
$$
All formulae here are zero-balanced. Therefore, Theorem~\ref{Th:join} yields a joining formula for each set: there
exist formulae $F$ and $G$ (in the language of $\BS$ and $\SL$) such that for all $i = 1, \dots, n$ we have
$$
\vdash E_{i+}  \to F \qquad\mbox{and}\qquad \vdash E_{i-} \to G.
$$
Let
$$
E = S_{t,v,w}, A_1, S_{t,v,w}, A_2, S_{t,v,w}, \ldots, S_{t,v,w}, A_n, S_{t,v,w};
$$
$$
B = E, (((u \SL F) \BS u) \BS S_{t,v,w});
\qquad
C = (S_{t,v,w} \SL (u \SL (G \BS u))), E;
$$
$$
\is(\Uc) = (s \SL E, B) \BS s \SL C.
$$
The formula $\is(\Uc)$, in a sense, would play the r\^{o}le of $A_1 \vee \ldots \vee A_n$.

We also define versions of $E_{j+}$ and $E_{j-}$, for $j = 1, \ldots, n$, which lack the sentinel on one edge: 
$$
E'_{j+} = A_j, S_{t,v,w}, \ldots, A_n, S_{t,v,w};
$$
$$
E'_{j-} = S_{t,v,w}, A_1, \ldots, S_{t,v,w}, A_j.
$$
For convenience, we also define $E'_{0-} = E'_{(n+1)+} = \Lambda$. Now for any $i = 1, \ldots, n$
we have
$$
E = E_{i-}, E'_{(i+1)+} = E'_{(i-1)-}, E_{i+} = E'_{(i-1)-}, S_{t,v,w}, A_i, S_{t,v,w}, E'_{(i+1)+}.
$$

\begin{lem}\label{Lm:forward}
$\vdash A_i \to \is(\Uc)$ for any $A_i \in \Uc$.
\end{lem}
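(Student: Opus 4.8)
The plan is to strip $\is(\Uc)$ down to its central numerator $s$ using the right rules, and then to build the resulting derivation by hand — the whole point being that the joining formulae $F$ and $G$ let two long blocks collapse into single sentinels.

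\emph{Step 1 (unfolding).} Since $\is(\Uc) = (s \SL E, B) \BS s \SL C$ is a curried formula, with $\Gamma = (s\SL E), B$ to the left of its central $s$ and $\Delta = C$ to the right, it suffices (by repeated application of $(\to\BS)$ and $(\to\SL)$; and Lemma~\ref{Lm:reverse} for the converse direction) to derive $\Gamma, A_i, \Delta \to s$, that is,
$$ (s \SL E),\ E,\ D_F,\ A_i,\ D_G,\ E \ \to\ s, $$
where I abbreviate $D_F = ((u \SL F)\BS u)\BS S_{t,v,w}$, $D_G = S_{t,v,w}\SL(u\SL(G\BS u))$, and $B$, $C$ have been spelled out as the sequences $E, D_F$ and $D_G, E$ respectively.

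\emph{Step 2 (two collapsing facts).} I claim: if $\vdash \Theta \to F$ then $\vdash \Theta, D_F \to S_{t,v,w}$, and if $\vdash \Theta \to G$ then $\vdash D_G, \Theta \to S_{t,v,w}$. For the first, $\vdash \Theta \to F$ gives $\vdash u \SL F, \Theta \to u$ by $(\SL\to)$ (against the axiom $u\to u$), then $\vdash \Theta \to (u\SL F)\BS u$ by $(\to\BS)$, and finally $\vdash \Theta, D_F \to S_{t,v,w}$ by $(\BS\to)$ with right premise the axiom $S_{t,v,w}\to S_{t,v,w}$. The second is the mirror image, using that $u\SL(G\BS u)$ is $G$ raised with $u$, so $\vdash \Theta \to u\SL(G\BS u)$ whenever $\vdash\Theta\to G$. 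Plugging the joining conditions $\vdash E_{j+}\to F$ and $\vdash E_{j-}\to G$ ($j=1,\dots,n$) into these facts yields $\vdash E_{j+}, D_F \to S_{t,v,w}$ and $\vdash D_G, E_{j-}\to S_{t,v,w}$.

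\emph{Step 3 (assembling).} Split the first copy of $E$ as $E'_{(i-1)-}, E_{i+}$ and the second as $E_{i-}, E'_{(i+1)+}$, so that the antecedent from Step 1, after the leading $s\SL E$, reads
$$ E'_{(i-1)-},\ E_{i+},\ D_F,\ A_i,\ D_G,\ E_{i-},\ E'_{(i+1)+}. $$
Breaking $E'_{(i-1)-} = S_{t,v,w}, A_1, \dots, S_{t,v,w}, A_{i-1}$ and $E'_{(i+1)+} = A_{i+1}, S_{t,v,w}, \dots, A_n, S_{t,v,w}$ into their individual formulae, this becomes a concatenation of exactly $2n+1$ consecutive blocks $\Psi_1,\dots,\Psi_{2n+1}$ in which each $\Psi_{2k-1}$ is a single sentinel $S_{t,v,w}$ except that $\Psi_{2i-1}$ is $E_{i+}, D_F$ and $\Psi_{2i+1}$ is $D_G, E_{i-}$ (so $\vdash\Psi_{2k-1}\to S_{t,v,w}$ by Step 2), and each $\Psi_{2k}$ is a single $A_k$ (so $\vdash\Psi_{2k}\to A_k$). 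Since $E = S_{t,v,w}, A_1, S_{t,v,w}, \dots, A_n, S_{t,v,w}$ has length $2n+1$, the formula $s\SL E$ is $(\cdots((s\SL S_{t,v,w})\SL A_n)\SL S_{t,v,w}\cdots)\SL S_{t,v,w}$, and peeling off its divisions one at a time by $(\SL\to)$ — the outermost against $\Psi_1$, the next against $\Psi_2$, and so on down to the axiom $s\to s$ — completes the derivation. The boundary cases $i=1$ (then $E'_{0-}=\Lambda$, $E_{1+}=E$, so $\Psi_1 = E, D_F$) and $i=n$ (then $E'_{(n+1)+}=\Lambda$, $E_{n-}=E$, so $\Psi_{2n+1}=D_G, E$) are covered by the same recipe. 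The only non-mechanical point is recognising in Step 3 that $E_{j+}, D_F$ and $D_G, E_{j-}$ each behave exactly as one sentinel $S_{t,v,w}$ — which is precisely what $F$ and $G$ were manufactured (via Theorem~\ref{Th:join}) to make true; everything else is routine.
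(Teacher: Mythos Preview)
Your proof is correct and follows essentially the same route as the paper: unfold $\is(\Uc)$ to the sequent $(s\SL E), B, A_i, C \to s$, use the joining properties of $F$ and $G$ to show that the blocks $E_{i+}, D_F$ and $D_G, E_{i-}$ each derive a single sentinel $S_{t,v,w}$, and then discharge $s\SL E$ against the resulting copy of $E$ by repeated $(\SL\to)$. The only cosmetic difference is the order of operations: the paper first applies $(\BS\to)$ and $(\SL\to)$ to collapse those two blocks to sentinels (obtaining literally $s\SL E, E \to s$) and then applies $(\SL\to)$ with trivial left premises, whereas you keep the blocks intact and feed the non-trivial derivations $\vdash E_{i+}, D_F \to S_{t,v,w}$ and $\vdash D_G, E_{i-} \to S_{t,v,w}$ directly as left premises of the corresponding $(\SL\to)$ steps; the resulting proof trees are the same up to reordering.
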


\begin{proof}
By $(\to\SL)$, $(\to\BS)$, $(\cdot\to)$, $A_i \to \is(\Uc)$ is derivable from $s \SL E, B, A_i, C, \to s$. 
By definition of $B,C$, the latter is the same as
$$
s \SL E, E'_{(i-1)-}, E_{i+}, ((u \SL F) \BS u) \BS S_{t,v,w}, A_i, S_{t,v,w} \SL (u \SL (G \BS u)), E_{i-}, E'_{(i+1)+} \to s.
$$
By construction, we have
$$
\vdash E_{i+} \to F \to (u \SL F) \BS u
\qquad\mbox{and}\qquad
\vdash E_{i-} \to G \to u \SL (G \BS u).
$$
Thus, applying $(\BS\to)$ and $(\SL\to)$, we reduce to
$$
s \SL E, E'_{(i-1)-}, S_{t,v,w}, A_i, S_{t,v,w}, E'_{(i+1)+} \to s.
$$
This sequent is exactly $s \SL E, E \to s$, which is derivable by several applications of $(\SL\to)$ (recall that $E$ is a sequence of formulae).
\end{proof}

\begin{lem}\label{Lm:back}
Let $\Pi$ be a non-empty sequence of types whose tops are not $s$, $t$, $v$, $w$, and let $\vdash \Pi \to \is(\Uc)$. Then for some
$A_j \in \Uc$ we have $\vdash B_2, \Pi, C_1 \to A_j$, where $B_2$ is a suffix of $B$ and $C_1$ is a prefix of $C$.
\end{lem}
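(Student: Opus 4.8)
The plan is to analyze a cut-free derivation of $\Pi \to \is(\Uc)$ by locating principal occurrences, exactly as in the proofs of Lemmas~\ref{Lm:sentinel_right} and~\ref{Lm:sentinel}. First I would reduce $\is(\Uc) = (s \SL E, B)\BS s \SL C$ to its top. Applying Lemma~\ref{Lm:reverse} repeatedly (inverting the outer $\SL$ and $\BS$ layers), derivability of $\Pi \to \is(\Uc)$ is equivalent to derivability of
$$
s \SL E, B, \Pi, C \to s.
$$
Here $s$ is a fresh variable, so by Lemma~\ref{Lm:top} the principal occurrence of $s$ in the antecedent must be a top whose variable is $s$; the only such top is the leftmost $s$ inside $s \SL E$. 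Since that formula is of the form $s \SL (\text{something})$, Lemma~\ref{Lm:decomposition} (more precisely its corollary about $p \SL \Gamma$) forces this occurrence to be the leftmost formula in the antecedent — which it already is — and moreover splits the rest of the antecedent, $B, \Pi, C$, into pieces that must be reducible to $E = S_{t,v,w}, A_1, S_{t,v,w}, \ldots, A_n, S_{t,v,w}$. So $B,\Pi,C$ is partitioned as $\Theta_0, \Theta_1, \ldots$ with $\vdash \Theta_k \to (\text{the }k\text{-th component of }E)$, the components being alternately sentinels $S_{t,v,w}$ and formulae $A_i$.

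Next I would exploit the structure of $B = E, (((u\SL F)\BS u)\BS S_{t,v,w})$ and $C = (S_{t,v,w}\SL(u\SL(G\BS u))), E$ together with the sentinel analysis. The key point is that the sentinels inside $B$ and $C$ (and the copies of $E$ buried in them) must align with the sentinel-slots of the target $E$; Lemma~\ref{Lm:sentinel} (and its companion Lemma~\ref{Lm:sentinel_right}), applied to each $S_{t,v,w}$-component of the target, pins down which sub-blocks of $B,\Pi,C$ go into which slot, and in particular forces almost all of the ``padding'' blocks to be exactly single sentinels with nothing extra. Because $t,v,w$ do not occur in $\Pi$ as tops (the sentinel top $w$ is excluded by hypothesis, and $t,v$ never appear as tops at all in the relevant formulae), the non-sentinel material of $\Pi$ cannot be absorbed into a sentinel slot and must instead land entirely inside a single $A_i$-slot for exactly one index $j$. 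Tracking the boundary, the block of the antecedent reducing to $A_j$ is of the shape $B_2, \Pi, C_1$ where $B_2$ is the tail of $B$ that got pushed past the last sentinel of $E\subseteq B$ and $C_1$ is the head of $C$ up to its first sentinel — i.e. $B_2$ is a suffix of $B$ and $C_1$ is a prefix of $C$, yielding $\vdash B_2, \Pi, C_1 \to A_j$ as required.

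The main obstacle I expect is the bookkeeping in the previous paragraph: one must verify that the partition of $B,\Pi,C$ induced by Lemma~\ref{Lm:decomposition} is the ``expected'' one — that the $i$-th sentinel of $E\subseteq B$ matches the $i$-th sentinel slot of the target $E$, that the $F$- and $G$-raising gadgets $((u\SL F)\BS u)\BS S_{t,v,w}$ and $S_{t,v,w}\SL(u\SL(G\BS u))$ land on a sentinel slot rather than on some $A_i$ (this is where the fresh variable $u$ and the shape of these formulae, whose tops are the sentinel top $w$, do the work), and that no block is forced to be empty in a way that collapses $\Pi$. Each of these is a direct application of Lemmas~\ref{Lm:top}, \ref{Lm:decomposition}, \ref{Lm:sentinel_right}, and~\ref{Lm:sentinel}, using that $s,t,v,w$ are fresh and are excluded from the tops of $\Pi$; the argument is essentially a careful induction/case analysis on where each principal occurrence sits, with the freshness and sentinel lemmas eliminating all but the intended configuration. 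Once the partition is shown to be the intended one, reading off $B_2$, $\Pi$, $C_1$ and the index $j$ is immediate.
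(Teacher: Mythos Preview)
Your opening is exactly what the paper does: invert to $s \SL E, B, \Pi, C \to s$, locate the unique $s$-top in $s \SL E$, and decompose $B,\Pi,C$ into $\Psi_1,\ldots,\Psi_{2n+1}$ with $\vdash \Psi_{2j+1}\to S_{t,v,w}$ and $\vdash \Psi_{2j}\to A_j$. The key claim --- that $\Pi$ must land entirely inside a single $A_j$-slot --- is also the paper's target.

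Where your plan diverges is the ``main obstacle'' paragraph. You set yourself the task of showing the whole partition is the \emph{expected} one (sentinels of $B$ aligned to sentinel slots, gadgets in sentinel slots, etc.). That is both more than needed and, in one respect, wrong: the conclusion of the lemma is $\vdash B_2,\Pi,C_1\to A_j$ with $B_2$ a suffix of $B$ and $C_1$ a prefix of $C$, so the raising gadgets --- being the last formula of $B$ and the first of $C$ --- sit in the $A_j$-slot whenever $B_2$ or $C_1$ is non-empty, \emph{not} in a sentinel slot. The paper never tries to pin down the global partition; it only rules out the bad alternative: some non-empty portion $\Pi'$ of $\Pi$ lying in a sentinel slot $\Psi_{2j+1}=B_2,\Pi',C_1$ (the other alternative, $\Psi_{2j+1}=\Lambda$, dies by $\not\vdash\Lambda\to S_{t,v,w}$).

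For that bad alternative, Lemma~\ref{Lm:sentinel} does not apply directly: its hypothesis forbids $t,v,w$ in the tops of the non-sentinel pieces, but the raising gadgets $((u\SL F)\BS u)\BS S_{t,v,w}$ and $S_{t,v,w}\SL(u\SL(G\BS u))$ have top $w$ without being sentinels. The paper instead inverts $\vdash B_2,\Pi',C_1\to S_{t,v,w}$ to $\vdash B_2,\Pi',C_1, v\SL(t\BS v), t\BS w\to w$ and does a three-way case split on the principal $w$: (1) the rightmost $t\BS w$ (fails: no $t$-top), (2) a leftmost $S_{t,v,w}$ in $B_2$ (this is Lemma~\ref{Lm:sentinel_right} and forces $\Pi'=\Lambda$), and (3) the $B$-gadget $((u\SL F)\BS u)\BS S_{t,v,w}$ at the end of $B_2$, which needs a short fresh argument mimicking the proof of Lemma~\ref{Lm:sentinel_right} to again force $\Pi'=\Lambda$. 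Case~(3) is the piece your plan does not account for; it is not a ``direct application'' of the sentinel lemmas as stated. Once you add that case, your argument and the paper's coincide.
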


\begin{proof}
Inverting $(\to\SL)$ and $(\to\BS)$ gives $\underline{s} \SL E, B, \Pi, C \to s$ ($B$, $\Pi$, and $C$ do not have $s$ in tops).
Now apply Lemma~\ref{Lm:decomposition}. The sequence $B, \Pi, C$ gets split into $\Psi_1, \ldots, \Psi_{2n+1}$, and we have
$\vdash \Psi_1 \to S_{t,v,w}$, $\vdash \Psi_2 \to A_1$, $\vdash \Psi_3 \to S_{t,v,w}$, \ldots, $\vdash \Psi_{2n} \to A_n$, $\vdash \Psi_{2n+1} \to S_{t,v,w}$.

Here we consider the interesting case of a non-empty $\Pi$. The case of $\Pi = \Lambda$ is similar and is handled in Lemma~\ref{Lm:back_empty} below.
If $\Pi$ as a whole comes into one
of $\Psi_{2j}$, $\vdash \Psi_{2j} \to A_j$, this is exactly what we want ($\Psi_{2j} = B_2, \Pi, C_1$).

In the other case, there are two possibilities: either 
a non-empty part of $\Pi$, denoted by $\Pi'$, comes to a part
$\Psi_{2j+1} \to S_{t,v,w}$, where $\Psi_{2j+1} = B_2, \Pi', C_1$ ($B_2$ is a suffix of $B$, $C_1$ is a prefix of $C$;
if $\Pi'$ is not the whole $\Pi$, one of them is empty), or 
for some $j$ we have $\Psi_{2j+1} = \Lambda$, and parts of $\Pi$ come to $\Psi_{2j}$ and $\Psi_{2j+2}$.
The latter case is impossible, since $\not\vdash \Lambda \to S_{t,v,w}$ (Lemma~\ref{Lm:sentinel})\footnote{This is the difference from
Safiullin's sentinel!}.
 In the former case we also wish to obtain
contradiction.

Inverting $(\to\SL)$ gives $\vdash B_2, \Pi', C_1, v \SL (t \BS v), t \BS w \to w$. 
We prove that such a sequent cannot be derivable, proceeding by induction on the number of formulae in $B_2$.
Suppose the sequent is derivable. Locate the principal occurrence of $w$. 
Since all tops in $\Pi'$ are not $w$, and so are all $A_j$, this 
principal occurrence is either the rightmost one, or located in $S_{t,v,w}$.
Moreover, formulae with $S_{t,v,w}$ are of the form $w \SL \Gamma$, except for 
the last formula in $B$, which is $((u \SL F) \BS u) \BS S_{t,v,w}$. Thus, 
the principal occurrence is either in the leftmost $S_{t,v,w}$ of $B_2$, or in $((u \SL F) \BS u) \BS S_{t,v,w}$ in the end of $B_2$.
The non-emptiness of $\Pi'$ prevents  it from being in the first formula of $C_1$, which is of the form $S_{t,v,w} \SL (u \SL (G \BS u))$.
Consider these three possible cases.

{\em Case 1:} $\vdash B_2, \Pi', C_1, v \SL (t \BS v), t \BS \underline{w} \to w$.
By Lemma~\ref{Lm:decomposition}, we obtain derivability of $B_2, \Pi', C_1, v \SL (t \BS v) \to t$, which immediately fails, since there are no $t$ tops in
the antecedent.

{\em Case 2:} the principal occurrence is in the leftmost $S_{t,v,w}$ in $B_2$:
$$
\vdash \underline{w} \SL (v \SL (t \BS v), t \BS w), B'_2, \Pi', C_1, v \SL (t \BS v), t \BS w \to w.
$$
By Lemma~\ref{Lm:sentinel_right}, $B'_2, \Pi', C_1 = \Lambda$, which contradicts the non-emptiness of $\Pi'$.

{\em Case 3:} the principal occurrence is in the last formula of $B_2$:
$$
\vdash B''_2, ((u \SL F) \BS u) \BS \underline{w} \SL (v \SL (t \BS v), t \BS w), \Pi', C_1, v \SL (t \BS v), t \BS w \to w.
$$
In this case essentially the same happens: by Lemma~\ref{Lm:decomposition} we have
\begin{align*}
& \vdash B''_2 \to (u \SL F) \BS u;\\
& \Pi', C_1, v \SL (t \BS v), t \BS w = \Psi_1, \Psi_2;\\
& \vdash \Psi_1 \to v \SL (t \BS v);\\
& \vdash \Psi_2 \to t \BS w.
\end{align*}
Again, as in the proof of Lemma~\ref{Lm:sentinel_right}, $\Psi_1$ should be $\Pi', C_1, v \SL (t \BS v)$, the principal occurrence
is located as follows: 
$\vdash \Pi', C_1, \underline{v} \SL (t \BS v), t \BS v \to v$, and $\Pi', C_1 = \Lambda$, which contradicts with $\Pi' \ne \Lambda$.
\end{proof}

\begin{lem}\label{Lm:back_empty}
If $\vdash \Lambda \to \is(\Uc)$, then we have $\vdash B_2, C_1 \to A_j$, where $B_2$ is a suffix of $B$ and $C_1$ is a prefix of $C$.
\end{lem}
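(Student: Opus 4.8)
The plan is to follow the proof of Lemma~\ref{Lm:back} closely, with the junction between $B$ and $C$ --- where the empty $\Pi$ now sits --- taking over the role of $\Pi'$. First I would invert $(\to\SL)$ and $(\to\BS)$ (Lemma~\ref{Lm:reverse}) on $\vdash\Lambda\to\is(\Uc)$ to obtain $\vdash s\SL E, B, C\to s$. Since $s\SL E$ is the unique formula of the antecedent containing the variable $s$, is leftmost, and is of the form $s\SL\Gamma$, its top $s$ must be the principal occurrence, so Lemma~\ref{Lm:decomposition} splits $B, C$ as $\Psi_1,\dots,\Psi_{2n+1}$ with $\vdash\Psi_{2k}\to A_k$ ($1\le k\le n$) and $\vdash\Psi_{2k+1}\to S_{t,v,w}$ ($0\le k\le n$).

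Next I would locate the $B$/$C$ junction in this decomposition; since $B$ and $C$ are non-empty, it lies strictly between the extreme formulae of the sequence. If it coincides with the boundary between two consecutive pieces $\Psi_a$, $\Psi_{a+1}$, then exactly one of these has even index, say $\Psi_{2j}$, and it is a suffix of $B$ when it is $\Psi_a$ and a prefix of $C$ when it is $\Psi_{a+1}$: in the first case take $B_2=\Psi_{2j}$, $C_1=\Lambda$, in the second $B_2=\Lambda$, $C_1=\Psi_{2j}$, and in both $\vdash B_2, C_1\to A_j$, as required. Otherwise the junction is strictly inside some $\Psi_{i_0}$, which therefore contains both the final formula $((u\SL F)\BS u)\BS S_{t,v,w}$ of $B$ and the initial formula $S_{t,v,w}\SL(u\SL(G\BS u))$ of $C$; writing $\Psi_{i_0}=B_2, C_1$ with $B_2$ a non-empty suffix of $B$ and $C_1$ a non-empty prefix of $C$, the case $i_0=2j$ again gives $\vdash B_2, C_1\to A_j$ directly.

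So the real work is to show that the case $i_0=2j+1$ --- that is, $\vdash B_2, C_1\to S_{t,v,w}$ with $B_2, C_1$ both non-empty --- cannot occur. Inverting $(\to\SL)$ twice yields $\vdash B_2, C_1, v\SL(t\BS v), t\BS w\to w$, and I would trace the principal occurrence of $w$ exactly as in Lemma~\ref{Lm:back}. If it is the rightmost $w$, Lemma~\ref{Lm:decomposition} produces $\vdash B_2, C_1, v\SL(t\BS v)\to t$, impossible for lack of a $t$-top. If it sits in a plain sentinel $S_{t,v,w}$ inside $B_2$ or $C_1$, that formula (being of the form $w\SL\Gamma$) must be leftmost, hence the first formula of $B_2$, and Lemma~\ref{Lm:sentinel_right} then forces the rest of $\Psi_{i_0}$, in particular $C_1$, to be empty --- a contradiction. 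If it sits in the cap $((u\SL F)\BS u)\BS S_{t,v,w}$, the last formula of $B_2$, then Lemma~\ref{Lm:decomposition} together with $\not\vdash\Lambda\to t\BS w$ (and the free-group interpretation) isolates $\vdash C_1, v\SL(t\BS v)\to v\SL(t\BS v)$; inverting $(\to\SL)$ and using $C_1\ne\Lambda$, so that $v\SL(t\BS v)$ is not leftmost, the principal occurrence of $v$ must be the rightmost one, giving $\vdash C_1, v\SL(t\BS v)\to t$ --- again impossible. Finally, if it sits in $S_{t,v,w}\SL(u\SL(G\BS u))$, the first formula of $C_1$, then since this formula is of the form $w\SL\Gamma$ and $B_2\ne\Lambda$ precedes it, Lemma~\ref{Lm:decomposition} is contradicted.

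The step I expect to be most delicate is this last paragraph: one must notice that collapsing the two junctions of Lemma~\ref{Lm:back} into a single one costs nothing in the only situation that actually needs a contradiction --- the junction strictly inside a sentinel piece --- because there both $B_2$ and $C_1$ remain non-empty, which is exactly the hypothesis each subcase exploits; by contrast, the configuration with no analogue in Lemma~\ref{Lm:back}, namely the junction lying on a piece boundary, is disposed of at once by reading off the adjacent even-indexed piece.
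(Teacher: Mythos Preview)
Your proposal is correct and follows essentially the same route as the paper's proof: obtain the splitting $B,C=\Psi_1,\ldots,\Psi_{2n+1}$, observe that only the case where the $B$/$C$ junction falls strictly inside an odd-indexed $\Psi_{2j+1}$ is problematic, and rule that out by the same three/four-case analysis of the principal $w$ as in Lemma~\ref{Lm:back}, with non-emptiness of $B_2$ and $C_1$ replacing the non-emptiness of $\Pi'$. Your use of the free-group interpretation in the cap case is a harmless shortcut (all formulae of $C$ being zero-balanced, any $\Psi_2'$ properly containing $t\BS w$ has image $w\ne t^{-1}w$); the paper instead retraces the argument of Lemma~\ref{Lm:sentinel_right} directly to reach $C_1=\Lambda$.
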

Notice that $B_2$ and $C_1$ are allowed to be empty. In particular, we could get just $\vdash \Lambda \to A_j$.

\begin{proof}
As in the proof of the previous lemma, we get $B, C = \Psi_1, \ldots, \Psi_{2n+1}$, where $\vdash \Psi_{2j+1} \to S_{t,v,w}$ for $j = 0, 1, \ldots, n$ and
$\vdash \Psi_{2i} \to A_i$ for $i = 1, \ldots, n$. The only bad case is when {\em both} the last formula of $B$ and the first formula of $C$ come to $\Psi_{2j+1}$:
otherwise, for some $i$, $\Psi_{2i} = B_2, C_1$ ($B_2$, or $C_1$, or even both could be empty), which is what we need.

In the bad case, we have $\vdash B_2, C_1 \to S_{t,v,w}$, where both $B_2$ and $C_1$ are non-empty. Inverting $(\to\SL)$ yields
$$
\vdash B_2, C_1, v \SL (t \BS v), t \BS w \to w,
$$
and there are again three possibilities for the principal $w$, exactly as in the proof of the previous lemma. Notice that the usage of the first formula 
in $C_1$ for the principal occurrence is now prohibited by non-emptiness of $B_2$, not $\Pi'$.

{\em Case 1:} $\vdash B_2, C_1, v \SL (t \BS v), t \BS \underline{w} \to w$.  Exactly as Case~1 in the previous lemma.

{\em Case 2:} the principal occurrence is in the leftmost $S_{t,v,w}$ in $B_2$. As in Case~2 of the previous lemma,
we get $B_2', C_1 = \Lambda$, which contradicts non-emptiness of $C_1$.

{\em Case 3:} the principal occurrence is in the last formula of $B_2$:
$$\vdash B''_2, ((u \SL F) \BS u) \BS \underline{w} \SL (v \SL (t \BS v), t \BS w), C_1, v \SL (t \BS v), t \BS w \to w.$$
This boils out into $C_1 = \Lambda$ (see proof of the previous lemma), which is false.
\end{proof}

Now we come to the {\bf main part of the construction.} Consider a language without the empty word 
generated by a context-free
grammar $\Gc$ in Greibach normal form~\citep{Greibach1965}. Let $\Sigma = \{ a_1, \ldots, a_\mu \}$
be its terminal alphabet and $\Nc = \{ N_0, N_1, \ldots, N_\nu \}$ be the non-terminal one; $N_0$ is the
starting symbol. 

Production rules of $\Gc$ are of the form
$$
N_i \Rightarrow a_j N_k N_\ell, \qquad\mbox{or}\qquad
N_i \Rightarrow a_j N_k, \qquad\mbox{or}\qquad
N_i \Rightarrow a_j.
$$
In order to simplify the proof, we write all these rules uniformly:
$$
N_i \Rightarrow a_j N_k^? N_\ell^?,
$$
where ${}^{?}$ means optionality.

Our variables (all distinct) will be $x$, $z$, $p_i$, $q_i$ $r_i$ ($0 \leq i \leq \nu$), plus 
$u,t,v,w,s$ for the construction described above.
Introduce the following formulae:
\begin{center}
\begin{tabular}{ll}
$H_i = (z \SL z) \SL S_{p_i,q_i,r_i}$ & \quad for each $N_i \in \Nc$ ($0 \leq i \leq \nu$);\\[2pt]
$A_{j;i,k^?,\ell^?} = x \SL ((H_k^?, H_\ell^?, S_{p_i,q_i,r_i}) \BS x)$ & \quad for each production rule $N_i \Rightarrow a_j N_k^? N_\ell^?$.
\end{tabular}
\end{center}

\begin{lem}
${A_{j;i,k^?,\ell^?}}$ is a zero-balance formula.
\end{lem}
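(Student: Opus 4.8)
The plan is to compute the free group interpretation $\nrm{A_{j;i,k^?,\ell^?}}$ directly and verify it equals $\varepsilon$; this is purely a bookkeeping computation using that $\nrm{\cdot}$ behaves homomorphically together with the already-established balance of the sentinel. First I would record the two elementary facts: $\nrm{z \SL z} = \nrm{z}\,\nrm{z}^{-1} = \varepsilon$, and, by Lemma~\ref{Lm:sentinel_balance}, $\nrm{S_{p_i,q_i,r_i}} = \varepsilon$. Combining these gives $\nrm{H_i} = \nrm{z \SL z}\,\nrm{S_{p_i,q_i,r_i}}^{-1} = \varepsilon$, so each $H_i$ is zero-balanced as well.

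Next, since the interpretation of a sequence of formulae is the product of the interpretations of its members, the antecedent block $(H_k^?, H_\ell^?, S_{p_i,q_i,r_i})$ — whether or not the optional $H$'s are actually present — has interpretation equal to a product of zero-balanced elements, hence $\varepsilon$. Write $\Gamma$ for this block.

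Finally, $A_{j;i,k^?,\ell^?}$ is literally the formula $\Gamma$ raised by the fresh variable $x$, i.e.\ it has the shape $x \SL (\Gamma \BS x)$. As observed in the proof of Lemma~\ref{Lm:sentinel_balance}, raising does not change the free group interpretation:
$$
\nrm{x \SL (\Gamma \BS x)} = \nrm{x}\,\bigl(\nrm{\Gamma}^{-1}\nrm{x}\bigr)^{-1} = \nrm{x}\,\nrm{x}^{-1}\,\nrm{\Gamma} = \nrm{\Gamma} = \varepsilon.
$$
Therefore $A_{j;i,k^?,\ell^?}$ is zero-balanced. There is no genuine obstacle in this lemma: the only inputs are the homomorphism-like action of $\nrm{\cdot}$ on sequences and implications, the trivial fact that $z \SL z$ is zero-balanced, and Lemma~\ref{Lm:sentinel_balance}; the point of stating it separately is simply that it is the hypothesis needed to invoke Theorem~\ref{Th:join} when constructing joining formulae from these $A_{j;i,k^?,\ell^?}$ later in the construction.
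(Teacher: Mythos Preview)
Your proof is correct and essentially the same as the paper's: both compute $\nrm{A_{j;i,k^?,\ell^?}}$ directly, using Lemma~\ref{Lm:sentinel_balance} for the sentinel and the trivial cancellation $zz^{-1}=\varepsilon$ for $H_i$. The only cosmetic difference is that you package the $xx^{-1}$ cancellation as ``raising preserves $\nrm{\cdot}$'' applied to the whole block $\Gamma$, whereas the paper simply expands $\nrm{A_{j;i,k^?,\ell^?}} = x x^{-1} \nrm{H_k}^? \nrm{H_\ell}^? \nrm{S_{p_i,q_i,r_i}}$ in one line.
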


\begin{proof}
$\nrm{A_{j;i,k^?,\ell^?}} = x x^{-1} \nrm{H_k}^? \nrm{H_\ell}^? \nrm{S_{p_i,q_i,r_i}}$.

$\nrm{S_{p_i,q_i,r_i}} = \varepsilon$ by Lemma~\ref{Lm:sentinel_balance}.

$\nrm{H_k} = z z^{-1} \nrm{S_{p_k,q_k,r_k}}^{-1} = \varepsilon$.
\end{proof}

Let $\Uc_j$ be the set of all $A_{j;i,k^?,\ell^?}$ for each $a_j$ ($1 \leq j \leq \mu$). Since all formulae
in $\Uc_j$ are zero-balanced, we can construct $\is(\Uc_j)$ obeying Lemma~\ref{Lm:forward}, Lemma~\ref{Lm:back}, and
Lemma~\ref{Lm:back_empty}.
Finally, let $K_j = (z \SL z) \SL \is(\Uc_j)$.

In our derivation analysis, we shall frequently use Lemma~\ref{Lm:decomposition} (decomposition of division in the antecedent). This requires
locating the principal occurrence of a specific variable. Since the principal occurrence is always a top, we need to recall the tops of formulae
used in our construction. For the convenience of the reader, we gather all of them in one table:

\begin{center}
\begin{tabular}{l@{\ }|@{\ }l}
{\bf formula} & {\bf top variable} \\[3pt]
$A_{j;i,k^?,l^?}$ & $x$\\
$K_j$ & $z$\\
$H_i$ & $z$\\
$S_{p_i,q_i,r_i}$ & $r_i$\\
formulae of $B$ and $C$ & $x$ or $w$
\end{tabular}
\end{center}

Now we are ready to formulate the key lemma:
\begin{lem}\label{Lm:key}
For any non-empty word $a_{i_0} a_{i_1} \ldots a_{i_n}$ and any $N_m \in \Nc$, the word $a_{i_1} \ldots a_{i_n}$ is derivable from
$N_m$ in $\Gc$ if and only if $\vdash K_{i_0}, K_{i_1}, \ldots, K_{i_n} \to H_m$.
\end{lem}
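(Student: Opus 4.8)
The plan is to prove Lemma~\ref{Lm:key} by induction on the structure of derivations, in both directions simultaneously, using the machinery of principal occurrences, tops, and the decomposition lemma (Lemma~\ref{Lm:decomposition}) together with the properties of the sentinel (Lemma~\ref{Lm:sentinel}, Lemma~\ref{Lm:sentinel_right}) and of $\is(\Uc)$ (Lemma~\ref{Lm:forward}, Lemma~\ref{Lm:back}, Lemma~\ref{Lm:back_empty}).

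\medskip

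\noindent\textbf{($\Rightarrow$) Soundness.} First I would handle the ``only if'' direction by induction on the length of the derivation $N_m \Rightarrow_\Gc^* a_{i_1}\ldots a_{i_n}$ in the context-free grammar $\Gc$. The base case is $n=0$: a one-step derivation $N_m \Rightarrow a_{i_0}$ (a rule of the form $N_i \Rightarrow a_j$, with no non-terminals on the right), and I must show $\vdash K_{i_0} \to H_m$. Unfolding $K_{i_0} = (z\SL z)\SL\is(\Uc_{i_0})$ and $H_m = (z\SL z)\SL S_{p_m,q_m,r_m}$, reversing $(\to\SL)$ reduces this to $K_{i_0}, z\SL z \to S_{p_m,q_m,r_m}$; I then use Lemma~\ref{Lm:forward} to embed the relevant $A_{i_0;m,-,-}$ (which by the production rule has the shape $x\SL(S_{p_m,q_m,r_m}\BS x)$, with no $H$-denominators) into $\is(\Uc_{i_0})$, and a short computation with the raising pattern $x\SL(\cdot\BS x)$ and $(z\SL z)$ as a trivial filler yields the sentinel $S_{p_m,q_m,r_m}$ in the succedent. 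For the inductive step, the first step of the $\Gc$-derivation is $N_m \Rightarrow a_{i_0} N_k^? N_\ell^?$ and the word splits as $a_{i_1}\ldots a_{i_n} = w_k w_\ell$ with $N_k \Rightarrow_\Gc^* w_k$ and $N_\ell \Rightarrow_\Gc^* w_\ell$ (in shorter derivations). By the induction hypothesis $\vdash K_{\cdots} \to H_k$ on the block corresponding to $w_k$ and $\vdash K_{\cdots}\to H_\ell$ on the block for $w_\ell$; these give (after unfolding $H_k$, $H_\ell$) that the corresponding prefixes of the antecedent prove $S_{p_k,q_k,r_k}$ and $S_{p_\ell,q_\ell,r_\ell}$. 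Feeding these, together with a fresh sentinel $S_{p_m,q_m,r_m}$ (produced from the leftmost $K_{i_0}$ via Lemma~\ref{Lm:forward} applied to $A_{i_0;m,k^?,\ell^?}$), into the denominators $H_k^?, H_\ell^?, S_{p_m,q_m,r_m}$ of that formula, and then doing $(\SL\to)$, $(\BS\to)$ bookkeeping, collapses the whole antecedent to prove $H_m$.

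\medskip

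\noindent\textbf{($\Leftarrow$) Completeness.} This is the harder direction and I expect it to be the main obstacle. Here I induct on the cut-free derivation of $K_{i_0},\ldots,K_{i_n}\to H_m$. Reverse $(\to\SL)$ to get $K_{i_0},\ldots,K_{i_n}, z\SL z \to S_{p_m,q_m,r_m}$, then reverse $(\to\SL)$ inside the sentinel to reach $\ldots \to r_m$; locate the principal occurrence of $r_m$ by Lemma~\ref{Lm:top}. Using the table of tops (the tops of $K_j$ and $H_i$ are $z$, not $r_m$; the top of $S_{p_i,q_i,r_i}$ is $r_i$) and Lemma~\ref{Lm:decomposition}, I argue that the principal occurrence must sit in the sentinel produced by the \emph{leftmost} $K_{i_0}$, which forces a specific decomposition: one of the $A_{i_0;m,k^?,\ell^?} \in \Uc_{i_0}$ is selected (this is exactly a production rule $N_m \Rightarrow a_{i_0}N_k^? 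N_\ell^?$ of $\Gc$) via Lemma~\ref{Lm:back}/Lemma~\ref{Lm:back_empty}, and the remaining antecedent $K_{i_1},\ldots,K_{i_n}$ splits into a left part that must prove $H_k$ (if $N_k$ is present) and a right part that must prove $H_\ell$ (if $N_\ell$ is present), each of strictly smaller size. The role of the $(z\SL z)$ prefix on $H_i$ and $K_j$ and of the sentinels is precisely to rule out ``leakage'' between these blocks: $(z\SL z)$ cannot be consumed because there is no $z$ top available to match, and $\not\vdash\Lambda\to S_{t,v,w}$, $\not\vdash\Lambda\to S_{p_i,q_i,r_i}$ (Lemma~\ref{Lm:sentinel}) forbids the degenerate splits where a block becomes empty or two blocks merge. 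Applying the induction hypothesis to the two subderivations gives $N_k \Rightarrow_\Gc^* (\text{left subword})$ and $N_\ell \Rightarrow_\Gc^* (\text{right subword})$, and prepending the production rule $N_m \Rightarrow a_{i_0}N_k^? N_\ell^?$ yields $N_m \Rightarrow_\Gc^* a_{i_1}\ldots a_{i_n}$, as required. The base case $n=0$ unwinds symmetrically: $\vdash K_{i_0}\to H_m$ forces selection of an $A_{i_0;m,-,-}$ with no non-terminal denominators, i.e.\ a rule $N_m\Rightarrow a_{i_0}$.

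\medskip

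The delicate point throughout — and the main obstacle — is the case analysis for where the principal occurrence of $r_m$ (and, after recursion, of the inner sentinel variables) can land: one must check that every placement other than ``leftmost sentinel of the leftmost $K_{i_0}$'' leads to an undischargeable variable (no matching top) exactly as in the proofs of Lemma~\ref{Lm:sentinel} and Lemma~\ref{Lm:back}, and that the $(z\SL z)\SL(\cdot)$ wrapping on $H_i$ and $K_j$ indeed prevents the antecedent blocks from interacting. Once the bookkeeping of suffixes/prefixes of $B$ and $C$ (absorbed harmlessly because their tops are $x$ or $w$, never $z$ or a sentinel variable) is organized as in Lemma~\ref{Lm:back}, the induction closes cleanly, and Theorem~\ref{Th:SafiullinE} follows by taking the goal type $H = H_0$ and the unique type assignment $a_j \rhd K_j$, since Lemma~\ref{Lm:key} with $m=0$ says exactly that $a_{i_0}\ldots a_{i_n}$ is generated by $\Gc$ iff $\vdash K_{i_0},\ldots,K_{i_n}\to H_0$.
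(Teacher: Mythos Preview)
Your overall architecture (induction in both directions, Lemma~\ref{Lm:forward} for soundness, Lemma~\ref{Lm:back} for completeness, splitting the tail into blocks proving $H_k$ and $H_\ell$) matches the paper's. But the completeness direction as you describe it has two genuine gaps.

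\textbf{Wrong reversal, wrong variable.} Since $H_m = (z\SL z)\SL S_{p_m,q_m,r_m}$, inverting $(\to\SL)$ on $\Pi \to H_m$ gives $\Pi, S_{p_m,q_m,r_m} \to z\SL z$ and then $\Pi, S_{p_m,q_m,r_m}, z \to z$; you have the denominator and numerator swapped. Consequently the principal-occurrence analysis is on $z$, not on $r_m$. This matters: with succedent $z$, the only candidates are the $K_i$'s (each is $z\SL(\is(\Uc_i),z)$) and the standalone $z$, and the ``leftmost'' constraint picks out $K_{i_0}$; decomposition then yields $\vdash \Psi_1 \to \is(\Uc_{i_0})$ with $\Psi_1$ a prefix of $K_{i_1},\ldots,K_{i_n},S_{p_m,q_m,r_m}$. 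No sentinel is ``produced by $K_{i_0}$''; the sentinel $S_{p_m,q_m,r_m}$ entered the antecedent from reversing $H_m$, and it is only \emph{after} invoking Lemma~\ref{Lm:back} that one passes to an analysis with succedent $x$ (not $r_m$).

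\textbf{The $B_2,C_1$ residue is not harmless.} Lemma~\ref{Lm:back} hands you $\vdash B_2,\vec{K},S_{p_m,q_m,r_m},C_1 \to A_{i_0;i,k^?,\ell^?}$ with a priori nonempty $B_2,C_1$, and the induction hypothesis applies only to pure $K$-sequences with succedent $H_k$ or $H_\ell$. Showing that in fact $B_2=C_1=\Lambda$ (and that $i=m$) is the bulk of the paper's argument and requires two auxiliary nonderivability lemmas proved separately: one ruling out $\widetilde{B}_2,\vec{K}\to\is(\Uc_j)$ for any suffix $\widetilde{B}_2$ of $B$ (this is what forces $S_{p_m,q_m,r_m}$ into $\Psi_1$ at the first step), and one ruling out $H_{k'}^?,H_{\ell'}^?,S_{p_{i'},q_{i'},r_{i'}},\Phi\to H_k$ for $\Phi$ beginning with a $B$-formula (this kills the case where the principal $x$ sits in the leftmost $A$-formula of $B_2$). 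Your remark that the $B,C$ fragments are ``absorbed harmlessly because their tops are $x$ or $w$'' is precisely backwards: the fact that $B_2$ contains formulae with top $x$ creates a competing location for the principal $x$, and a multi-subcase analysis is needed to exclude it. Without these two lemmas and the Case~1/Case~2 split on the principal $x$, the induction does not close.
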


Before proving it, we state and prove yet two technical statements.
\begin{lem}\label{Lm:badcase}
No sequent of the form $\widetilde{B}_2, K_{i_1}, \ldots, K_{i_m} \to \is(\Uc_j)$, where $\widetilde{B}_2$ is a suffix
of $B$ (maybe empty), is derivable. 
In particular, $\not\vdash K_{i_1}, \ldots, K_{i_m} \to \is(\Uc_j)$ and $\not\vdash \Lambda \to \is(\Uc_j)$.
\end{lem}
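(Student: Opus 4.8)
The plan is to invert the outermost divisions in $\is(\Uc_j)$ and then run the proof-analysis machinery (Lemma~\ref{Lm:decomposition} together with the ``sentinel'' lemmas) exactly as in the proofs of Lemma~\ref{Lm:back} and Lemma~\ref{Lm:back_empty}, using the fact that $K_i$ has top $z$, not $s$, $t$, $v$, $w$. Concretely, write $\is(\Uc_j) = (s \SL E_j, B_j) \BS s \SL C_j$ (subscripts $j$ to recall that the data $E,B,C,F,G$ come from $\Uc_j$). By Lemma~\ref{Lm:reverse}, inverting $(\to\BS)$ and $(\to\SL)$ reduces the putative sequent $\widetilde B_2, K_{i_1}, \ldots, K_{i_m} \to \is(\Uc_j)$ to
$$
\vdash s \SL E_j, B_j, \widetilde B_2, K_{i_1}, \ldots, K_{i_m}, C_j \to s.
$$
The succedent top is $s$; the only formula in the antecedent whose top is $s$ is the leftmost $s \SL E_j$, so that is the principal occurrence. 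Since it has the form $s \SL \Gamma$, Lemma~\ref{Lm:decomposition} applies: $B_j, \widetilde B_2, K_{i_1}, \ldots, K_{i_m}, C_j$ splits as $\Psi_1, \ldots, \Psi_{2n+1}$ with $\vdash \Psi_{2\ell+1} \to S_{t,v,w}$ and $\vdash \Psi_{2\ell} \to A_\ell$ (writing $\Uc_j = \{A_1,\dots,A_n\}$ for brevity).

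The heart of the argument is that the block $K_{i_1}, \ldots, K_{i_m}$ is non-empty and cannot be absorbed. First observe that no $\Psi$-block is empty: $\vdash \Lambda \to S_{t,v,w}$ fails by Lemma~\ref{Lm:sentinel}, and $\vdash \Lambda \to A_\ell$ fails because $A_\ell$ has top $x$ (so it reduces, after inverting $(\to\SL),(\to\BS)$, to $\vdash\Lambda\to S_{t,v,w}$, again impossible). Hence each $K_{i_r}$ must lie entirely inside some $\Psi_{2\ell}$ or some $\Psi_{2\ell+1}$. But $K_{i_r}$ has top $z$, whereas by Lemma~\ref{Lm:top} the principal occurrence of $x$ (if $K_{i_r}$ lands in an even block $\Psi_{2\ell}\to A_\ell$, top $x$) or of $w$ (if it lands in an odd block $\Psi_{2\ell+1}\to S_{t,v,w}$, top $w$) must be a top of one of the formulae in that block; so it cannot be supplied by $K_{i_r}$. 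Thus, in either case, inverting the outer divisions of $A_\ell$ or of $S_{t,v,w}$ and then re-running the three-case analysis of Lemma~\ref{Lm:back} verbatim (Case~1 fails for lack of a $t$ top; Cases~2 and~3 force, via Lemma~\ref{Lm:sentinel_right}, the surrounding antecedent to be empty, contradicting the presence of $K_{i_r}$) yields a contradiction. Hence no $\Psi$-block can contain any $K_{i_r}$, so the block $K_{i_1},\dots,K_{i_m}$ must be empty --- contradicting $m\ge 1$ whenever it is present. The special subcases ($\not\vdash K_{i_1},\dots,K_{i_m}\to\is(\Uc_j)$, taking $\widetilde B_2=\Lambda$; and $\not\vdash\Lambda\to\is(\Uc_j)$, which is just Lemma~\ref{Lm:back_empty}'s bad case without any $\Pi$) follow immediately.

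The main obstacle is purely bookkeeping: one must track precisely which formula of $B_j$ and which of $C_j$ (the ``exotic'' ones, $((u\SL F)\BS u)\BS S_{t,v,w}$ at the right end of $B_j$ and $S_{t,v,w}\SL(u\SL(G\BS u))$ at the left end of $C_j$) can host a principal $w$, since only those are not of the clean form $w\SL\Gamma$ --- this is exactly the three-case split already carried out in Lemma~\ref{Lm:back} and Lemma~\ref{Lm:back_empty}, so the right move is to cite those proofs rather than reprove them. No genuinely new idea is needed; the whole point of introducing the $z$-topped wrappers $H_i$ and $K_j$ was to make the variables $s,t,v,w$ (and $x$) unreachable from the block of $K$'s, which is what makes this lemma go through.
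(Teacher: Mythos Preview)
Your plan has a genuine gap in the ``even block'' case, and the overall strategy (show that no $K$ can land in any $\Psi$-block) does not cover the full statement.

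First, a symptom of the problem: you claim that $\vdash \Lambda \to A_\ell$ ``reduces, after inverting $(\to\SL),(\to\BS)$, to $\vdash\Lambda\to S_{t,v,w}$.'' It does not. Recall that $A_\ell = A_{j;i,k^?,\ell^?} = x \SL ((H_k^?, H_\ell^?, S_{p_i,q_i,r_i}) \BS x)$; the sentinel sitting inside is $S_{p_i,q_i,r_i}$, not $S_{t,v,w}$. This is not cosmetic: it is exactly the point on which the whole lemma turns. When a $K$ (or anything else) lands in an even block $\Psi_{2\ell}$ with $\vdash \Psi_{2\ell} \to A_\ell$, inverting $A_\ell$ yields a sequent ending in $(H_k^?, H_\ell^?, S_{p_i,q_i,r_i}) \BS x \to x$. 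The three-case analysis from the proof of Lemma~\ref{Lm:back} that you propose to ``re-run verbatim'' concerns sequents ending in $v \SL (t \BS v), t \BS w \to w$; it simply does not apply. The correct analysis here (locate the principal $x$, then in each case use Lemma~\ref{Lm:sentinel} for $S_{p_i,q_i,r_i}$ to force $\Psi_3 = S_{p_i,q_i,r_i}$ and hence derive a contradiction) is a genuinely new piece of work, and it is precisely what the paper's proof of this lemma consists of.

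Second, even granting your odd-block argument, your conclusion ``no $\Psi$-block contains a $K_{i_r}$, contradicting $m\ge 1$'' does not prove the lemma: the statement also covers $m=0$ with $\widetilde{B}_2$ nonempty, where there are no $K$'s to yield your contradiction. The paper's route avoids this difficulty altogether: it invokes Lemma~\ref{Lm:back}/\ref{Lm:back_empty} as a black box to obtain $\vdash \widehat{B}_2, \vec{K}, C_1 \to A_{j;i,k^?,\ell^?}$ (with $\widehat{B}_2 = B_2, \widetilde{B}_2$), and then the contradiction comes from the fact that \emph{nothing} in $\widehat{B}_2, \vec{K}, C_1$ has a $p_i$, $q_i$, or $r_i$ top --- regardless of whether any $K$'s are present. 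So the right move is to cite Lemma~\ref{Lm:back}/\ref{Lm:back_empty} and then carry out the $x$--$S_{p_i,q_i,r_i}$ analysis, rather than re-open the $t,v,w$ machinery.
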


\begin{proof}
In order to make the notation shorter, let $\vec{K} = K_{i_1}, \ldots, K_{i_m}$
Suppose that the sequent in question is derivable and apply Lemma~\ref{Lm:back} or Lemma~\ref{Lm:back_empty}, depending on whether
$\widetilde{B}_2, \vec{K}$ is empty:
$$
\vdash B_2, \widetilde{B}_2, \vec{K}, C_1 \to x \SL ((H_k^?, H_\ell^?, S_{p_i,q_i,r_i}) \BS x)
$$
for some $A_{j;i,k^?,\ell^?} = x \SL ((H_k^?, H_\ell^?, S_{p_i,q_i,r_i}) \BS x) \in \Uc_j$.

Here $B_2$, or $C_1$, or both can be empty. In particular, Lemma~\ref{Lm:back_empty} could yield $\vdash \Lambda \to x \SL ((H_k^?, H_\ell^?, S_{p_i,q_i,r_i}) \BS x)$.
In  this case, however, we get $\vdash (H_k^?, H_\ell^?, S_{p_i,q_i,r_i}) \BS x \to x$, and by Lemma~\ref{Lm:decomposition} $\vdash \Lambda \to S_{p_i,q_i,r_i}$, which
is not true.

Let $\widehat{B}_2 = B_2, \widetilde{B}_2$. This sequence is not always a suffix of $B$; however, it keeps the property we shall need:
all formulae in $\widehat{B}_2$ have tops $x$ or $w$, and a formula with top $x$ is always of the form $x \SL \Gamma$ and
 could not be the last formula of $\widehat{B}_2$.

 Inverting $(\to\SL)$ (Lemma~\ref{Lm:reverse}) yields:
$$
\vdash \widehat{B}_2, \vec{K}, C_1, (H_k^?, H_\ell^?, S_{p_i,q_i,r_i}) \BS x \to x.
$$
Locate the principal occurrence of $x$. Recall that tops of all $K_i$'s are $z$, thus this top is either in one of the
$A$'s in $\widehat{B}_2$ or $C_1$, or the rightmost occurrence of $x$. Since all $A$'s are of the form $x \SL \Gamma$, the former is possible only if
it is the leftmost occurrence. Moreover, this leftmost occurrence should be in $\widehat{B}_2$, because, even if $\widehat{B}_2$ and $\vec{K}$ happen to be empty,
the leftmost formula of $C$ (and, thus, of its prefix $C_1$) is not one of the $A$'s.

Thus, we have to consider two cases.

{\em Case 1.} The principal $x$ is in the leftmost formula of the form $A_{j';i',{k'}^?,{\ell'}^?}$ in $\widehat{B}_2$:
$$
\vdash \underline{x} \SL ((H_{k'}^?, H_{\ell'}^?, S_{p_{i'},q_{i'},r_{i'}}) \BS x), \widehat{B}'_2, \vec{K}, C_1, (H_k^?, H_\ell^?, S_{p_i,q_i,r_i}) \BS x \to x.
$$
Notice that $A_{j';i',{k'}^?,{\ell'}^?}$, as a formula with top $x$, is not the rightmost formula of $\widehat{B}_2$, thus $\widehat{B}'_2 \ne \Lambda$. Lemma~\ref{Lm:decomposition} gives
$$
\vdash \widehat{B}'_2, \vec{K}, C_1, (H_k^?, H_\ell^?, S_{p_i,q_i,r_i}) \BS x \to (H_{k'}^?, H_{\ell'}^?, S_{p_{i'},q_{i'},r_{i'}}) \BS x.
$$
Inverting $(\to\BS)$ (Lemma~\ref{Lm:reverse}) yields
$$
\vdash H_{k'}^?, H_{\ell'}^?, S_{p_{i'},q_{i'},r_{i'}}, \widehat{B}'_2, \vec{K}, C_1, (H_k^?, H_\ell^?, S_{p_i,q_i,r_{i'}}) \BS x \to x.
$$
This situation looks similar to the one in the beginning of the proof, but now there is only one choice of the principal $x$.
Indeed, tops of $H_i$ and $K_i$ are not $x$. The $A$'s in $\widehat{B}'_2$ and $C_1$ also could not include the principal $x$, since in this
case it would be the leftmost one. This is not true, since there is at least $S_{p_{i'},q_{i'},r_{i'}}$ to the left. Thus,
$$
\vdash H_{k'}^?, H_{\ell'}^?, S_{p_{i'},q_{i'},r_{i'}}, \widehat{B}'_2, \vec{K}, C_1, (H_k^?, H_\ell^?, S_{p_i,q_i,r_i}) \BS \underline{x} \to x,
$$
and by Lemma~\ref{Lm:decomposition} we have
\begin{align*}
& H_{k'}^?, H_{\ell'}^?, S_{p_{i'},q_{i'},r_{i'}}, \widehat{B}'_2, \vec{K}, C_1 = \Psi_1, \Psi_2, \Psi_3;\\
& \vdash \Psi_1 \to H_k^?; \\
& \vdash \Psi_2 \to H_\ell^?; \\
& \vdash \Psi_3 \to S_{p_i,q_i,r_i}.
\end{align*}
Here and further we use the following convention about optionality: if $\vdash \Gamma \to H^?_k$, but there is no $H_k$, 
then this statement means $\Gamma = \Lambda$; same for $H_\ell$. We also impose priority of optionality: if there is $H_\ell$, there
is also $H_k$ (otherwise we rename $H_\ell$ to $H_k$).

Notice that tops of $H_i$, $K_i$, and formulae of $\widehat{B}'_2$ and $C_1$ are not $p_i$, $q_i$, or $r_i$. 
Thus, by Lemma~\ref{Lm:sentinel}, $i'$ should be equal to $i$ and $\Psi_3$ should be $S_{p_i,q_i,r_i}$. This yields $\widehat{B}'_2 = \Lambda$.
Contradiction:
we have shown above that $\widehat{B}'_2 \ne\Lambda$.

{\em Case 2.} The principal $x$ is the rightmost one:
$$
\vdash \widehat{B}_2, \vec{K}, C_1, (H_k^?, H_\ell^?, S_{p_i,q_i,r_i}) \BS \underline{x} \to x.
$$
Lemma~\ref{Lm:decomposition} yields
\begin{align*}
& \widehat{B}_2, \vec{K}, C_1 = \Psi_1, \Psi_2, \Psi_3;\\
& \vdash \Psi_1 \to H_k^?;\\
& \vdash \Psi_2 \to H_\ell^?;\\
& \vdash \Psi_3 \to S_{p_i,q_i,r_i}.
\end{align*}
The third derivability, $\vdash \Psi_3 \to S_{p_i,q_i,r_i}$, contradicts Lemma~\ref{Lm:sentinel}: there are no $p_i$, $q_i$, or $r_i$ tops in the
left-hand side.
\end{proof}

\begin{lem}\label{Lm:badcase2}
Let $\Phi$ be a non-empty sequence of formulae whose tops are not $p_i$, $q_i$, $r_i$; any formula with top $z$ in $\Phi$ is of the form $z \SL \Gamma$;
the top of the leftmost formula in $\Phi$ is not $z$. Then
no sequent of the form $H_{k'}^?, H_{\ell'}^?, S_{p_{i'},q_{i'},r_{i'}}, \Phi \to H_k$ is derivable.
\end{lem}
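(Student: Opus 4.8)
The plan is to argue by contradiction, mimicking the proof of Lemma~\ref{Lm:badcase}. Recall that $H_k = (z \SL z) \SL S_{p_k,q_k,r_k}$ has top $z$. Assuming $\vdash H_{k'}^?, H_{\ell'}^?, S_{p_{i'},q_{i'},r_{i'}}, \Phi \to H_k$, the first step is to reduce the succedent to its top by inverting $(\to\SL)$ twice (Lemma~\ref{Lm:reverse}), obtaining
$$
\vdash H_{k'}^?, H_{\ell'}^?, S_{p_{i'},q_{i'},r_{i'}}, \Phi, S_{p_k,q_k,r_k}, z \to z.
$$
By Lemma~\ref{Lm:top} the principal occurrence of the succedent $z$ is a top; consulting the table of tops, the only candidates are the rightmost formula $z$, the top of $H_{k'}$ or $H_{\ell'}$ (when present), or the top of a formula of $\Phi$ with top $z$, which by hypothesis must be of the form $z \SL \Gamma$ (the sentinels have $r$-tops).

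Next I would rule out all but one candidate. If the principal $z$ is the rightmost formula, Lemma~\ref{Lm:decomposition} with $n=m=0$ forces the antecedent to its left to be empty, contradicting the presence of $S_{p_{i'},q_{i'},r_{i'}}$. If it lies in a formula $z\SL\Gamma$ of $\Phi$, then by the corollary to Lemma~\ref{Lm:decomposition} that formula must be the leftmost one in the antecedent, which is impossible since $S_{p_{i'},q_{i'},r_{i'}}$ precedes $\Phi$. The same corollary, applied to the formula $H_{\ell'}$ (also of the form $z\SL\Gamma$), excludes the principal $z$ from being its top when $H_{k'}$ is present; by the priority-of-optionality convention, $H_{\ell'}$ present implies $H_{k'}$ present, so $H_{k'}$ must in fact be present and the principal $z$ is its top. (If no $H$-formula is present, or if the principal $z$ were the top of $H_{\ell'}$, the argument reduces to the same contradictions.)

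Hence $H_{k'} = (z\SL z)\SL S_{p_{k'},q_{k'},r_{k'}}$ is the leftmost formula; decomposing it by Lemma~\ref{Lm:decomposition} splits the remainder $H_{\ell'}^?, S_{p_{i'},q_{i'},r_{i'}}, \Phi, S_{p_k,q_k,r_k}, z$ into $\Pi_1, \Pi_2$ with $\vdash \Pi_1 \to S_{p_{k'},q_{k'},r_{k'}}$ and $\vdash \Pi_2 \to z$, where $\Pi_1 \ne \Lambda$ since $\not\vdash \Lambda \to S_{p_{k'},q_{k'},r_{k'}}$ (Lemma~\ref{Lm:sentinel}). This is the step I expect to be the main obstacle: I would apply Lemma~\ref{Lm:sentinel} to $\vdash \Pi_1 \to S_{p_{k'},q_{k'},r_{k'}}$, the crucial observation being that $\Pi_1$ can contain no copy of the sentinel $S_{p_{k'},q_{k'},r_{k'}}$ other than possibly $S_{p_{i'},q_{i'},r_{i'}}$ (if $i'=k'$) or $S_{p_k,q_k,r_k}$ (if $k=k'$) --- indeed $\Phi$ contains no such copy because its tops avoid all $p$-, $q$-, $r$-variables, $H_{\ell'}$ is structurally distinct, and none of the remaining formulae of $\Pi_1$ has top $p_{k'}$, $q_{k'}$, or $r_{k'}$. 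So Lemma~\ref{Lm:sentinel} forces $\Pi_1$ to be the single formula $S_{p_{k'},q_{k'},r_{k'}}$; but a nonempty prefix of $H_{\ell'}^?, S_{p_{i'},q_{i'},r_{i'}}, \Phi, \ldots$ begins with $H_{\ell'}$ (if present --- not a sentinel, contradiction) or with $S_{p_{i'},q_{i'},r_{i'}}$, so $H_{\ell'}$ is absent, $i'=k'$, $\Pi_1 = S_{p_{i'},q_{i'},r_{i'}}$, and $\Pi_2 = \Phi, S_{p_k,q_k,r_k}, z$.

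Finally, $\vdash \Phi, S_{p_k,q_k,r_k}, z \to z$ is impossible: the principal $z$, again a top, cannot be the rightmost formula (the antecedent is nonempty), cannot lie in $S_{p_k,q_k,r_k}$ (whose top is $r_k \ne z$), and cannot lie in $\Phi$ --- the leftmost formula of $\Phi$ has a non-$z$ top by hypothesis, and any later $\Phi$-formula with top $z$ has the form $z\SL\Gamma$ and, by the corollary to Lemma~\ref{Lm:decomposition}, would have to be leftmost. This final contradiction completes the proof.
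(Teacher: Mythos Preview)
Your proof is correct and follows essentially the same route as the paper's: invert $(\to\SL)$, argue that the principal $z$ must be the top of $H_{k'}$ (which therefore exists), decompose and apply Lemma~\ref{Lm:sentinel} to force $H_{\ell'}$ absent and $\Psi_1=S_{p_{i'},q_{i'},r_{i'}}$ with $i'=k'$, and finally fail to locate a principal $z$ in $\Phi, S_{p_k,q_k,r_k}, z \to z$. Your exposition is somewhat more detailed than the paper's in spelling out why each non-viable candidate for the principal occurrence is excluded, but the argument is the same.
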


\begin{proof}
Suppose the contrary.
Recall that $H_k = (z \SL z) \SL S_{p_k,q_k,r_k} = z \SL (S_{p_k,q_k,r_k}, z)$ and apply Lemma~\ref{Lm:reverse}:
$$
\vdash H_{k'}^?, H_{\ell'}^?, S_{p_{i'},q_{i'},r_{i'}}, \Phi, S_{p_k,q_k,r_k}, z \to z.
$$
Since all $z$ tops come in formulae of the form $z \SL \Gamma$, the principal occurrence should be the leftmost one.
(The standalone rightmost $z$ could not be principal, because then by Lemma~\ref{Lm:decomposition} it should have been the only formula in the antecedent.)
 This could happen only
in $H_{k'}$ (thus, it should exist): the sentinel $S_{p_{i'},q_{i'},r_{i'}}$ blocks other possibilities. Thus, we have
$$
\vdash\underline{z} \SL (S_{p_{k'},q_{k'},r_{k'}}, z), H_{\ell'}^?, S_{p_{i'},q_{i'},r_{i'}}, \Phi, S_{p_k,q_k,r_k}, z \to z.
$$
Apply Lemma~\ref{Lm:decomposition}:
\begin{align*}
& H_{\ell'}^?, S_{p_{i'},q_{i'},r_{i'}}, \Phi, S_{p_k,q_k,r_k}, z = \Psi_1, \Psi_2;\\
& \vdash \Psi_1 \to S_{p_{k'},q_{k'},r_{k'}};\\
& \vdash \Psi_2 \to z.
\end{align*}
By Lemma~\ref{Lm:sentinel} $i' = k'$ and the first part, $\Psi_1$, should include only $S_{p_{i'},q_{i'},r_{i'}}$. 
Therefore,
there is no $H_{\ell'}$, and $\Psi_2 = \Phi, S_{p_k,q_k,r_k}, z$.

Now we have
$$
\vdash \Phi, S_{p_k,q_k,r_k}, z \to z
$$
and we fail to locate the principal occurrence of $z$. It could be only in $\Phi$, and since there $z$ tops appear only in formulae of the form $z \SL \Gamma$,
it should be the leftmost one. Contradiction: the leftmost formula of $\Phi$ does not have top $z$.
\end{proof}

\vskip 5pt
Now we finally {\bf prove the key lemma.}

\vskip 3pt
\begin{proof}[Proof of Lemma~\ref{Lm:key}] For convenience let $j = i_0$.

{The {\bf ``only if''} part} is easier. Proceed by induction on the derivation of $a_j a_{i_1} \dots a_{i_n}$ from $N_m$ in $\Gc$. Consider the first
production rule in this derivation: $N_m \Rightarrow a_j N^?_k N^?_\ell$. Here $N^?_k$ derives $a_{i_1} \dots a_{i_{n'}}$ and $N^?_\ell$ derives $a_{i_{n'+1}} \dots
a_{i_n}$. (If there is no $N_\ell$, then $n' = n$; if there are no $N_k$ and $N_\ell$, then $n=0$.) By induction hypothesis we have the following:
$$
\begin{aligned}
& \vdash K_{i_1}, \dots, K_{i_{n'}} \to H^?_k \\
& \vdash K_{i_{n'+1}}, \dots, K_{i_{n}} \to H^?_\ell
\end{aligned}
$$
Recall our convention about optionality: if $\vdash \Gamma \to H^?_k$, but there is no $H_k$, 
then this statement means $\Gamma = \Lambda$; same for $H_\ell$. 

Next, since the production rule $N_m \Rightarrow a_j N^?_k N^?_\ell$ is in $\Gc$, the formula $A_{j;m,k^?,\ell^?}$ belongs to $\Uc_j$, and by
Lemma~\ref{Lm:forward} we have
$$
\vdash A_{j;m,k^?,\ell^?} \to \is(\Uc_j)
$$
or, explicitly,
$$
\vdash x \SL ((H_k^?, H_\ell^?, S_{p_m,q_m,r_m}) \BS x) \to \is(\Uc_j).
$$
By cut with $H_k^?, H_\ell^?, S_{p_m,q_m,r_m} \to x \SL ((H_k^?, H_\ell^?, S_{p_m,q_m,r_m}) \BS x)$ we get
$$
\vdash H_k^?, H_\ell^?, S_{p_m,q_m,r_m} \to \is(\Uc_j).
$$
Now the necessary sequent $K_j, K_{i_1}, \ldots, K_{i_{n'}}, K_{i_{n'+1}}, \ldots, K_{i_n} \to H_m$ is derived as 
follows. Cut of $K_{i_1}, \ldots, K_{i_{n'}} \to H_k^?$, $K_{i_{n'+1}}, \ldots, K_{i_n} \to H_\ell^?$, and
$H_k^?, H_\ell^?, S_{p_m,q_m,r_m} \to \is(\Uc_j)$ gives $K_{i_1}, \ldots, K_{i_n}, S_{p_m,q_m,r_m} \to \is(\Uc_j)$
(if there is no $H_k$ and/or $H_\ell$, we omit the corresponding premise). Next, by $(\SL\to)$ and $(\to\SL)$ we
get $$\vdash (z \SL z) \SL \is(\Uc_j), K_{i_1}, \ldots, K_{i_{n'}}, K_{i_{n'+1}}, \ldots, K_{i_n} \to (z \SL z) \SL S_{p_m,q_m,r_m}.$$
This is the necessary sequent, since $K_j = (z \SL z) \SL \is(\Uc_j)$ and $H_m = (z \SL z) \SL S_{p_m,q_m,r_m}$.

For the  {\bf ``if''} part, proceed by induction on a cut-free derivation of the sequent $K_j, K_{i_1}, \ldots, K_{i_n} \to H_m$. 
Basically, we show that the only way this derivation could go is the one shown in the ``only if'' part of this proof.

Recall that
$H_m = (z \SL z) \SL S_{p_m,q_m,r_m} = z \SL (S_{p_m,q_m,r_m}, z)$ and reverse $(\to\SL)$ twice (Lemma~\ref{Lm:reverse}). Thus
we get 
$$
\vdash K_j, K_{i_1}, \ldots, K_{i_n}, S_{p_m,q_m,r_m}, z \to z.
$$
Each $K_i$ is of the form $z \SL \Gamma$. 
Thus, the principal occurrence of $z$ is the top of the leftmost $K_i$, that is, $K_j = (\underline{z} \SL z) \SL \is(\Uc_j) = 
\underline{z} \SL (\is(\Uc_j), z)$, and we have 
$$\begin{aligned}
& K_{i_1}, \ldots, K_{i_n}, S_{p_m,q_m,r_m}, z = \Psi_1, \Psi_2;\\
& \vdash \Psi_1 \to \is(\Uc_j); \\ 
& \vdash \Psi_2 \to z.
\end{aligned}
$$

If $\Psi_1$ contains only $K$'s, then $\Psi_1 \to \is(\Uc_j)$ is not derivable by Lemma~\ref{Lm:badcase}. 
Moreover, $\Psi_2 \ne \Lambda$, since $\not\vdash \Lambda \to z$. Thus,
$\Psi_1 = K_{i_1}, \ldots, K_{i_n}, S_{p_m,q_m,r_m}$ and
$$
\vdash K_{i_1}, \ldots, K_{i_n}, S_{p_m, q_m,r_m} \to \is(\Uc_j).
$$
Let $\vec{K} = K_{i_1}, \ldots, K_{i_n}$. 
By Lemma~\ref{Lm:back} (recall that $\vec{K}, S_{p_m,q_m,r_m}$ is definitely non-empty),
$$
\vdash B_2, \vec{K}, S_{p_m,q_m,r_m}, C_1 \to A_{j;i,k^?,\ell^?}
$$
for some $A_{j;i,k^?,\ell^?} \in \Uc_j$. Since $A_{j;i,k^?,\ell^?} = x \SL ((H_k^?,H_\ell^?,S_{p_i,q_i,r_i}) \BS x)$, inversion of $(\to\BS)$ yields
$$
\vdash B_2, \vec{K}, S_{p_m,q_m,r_m}, C_1, (H_k^?,H_\ell^?,S_{p_i,q_i,r_i}) \BS x \to x.
$$
Locate the principal occurrence of $x$. It is either in the leftmost formula in $B_2$, of the form $A_{j';i',{k'}^?,{\ell'}^?}$, or the rightmost occurrence of $x$.

{\em Case 1.}
$$
\vdash \underline{x} \SL ((H_{k'}^?, H_{\ell'}^?, S_{p_{i'},q_{i'},r_{i'}}) \BS x), 
B'_2, \vec{K}, S_{p_m,q_m,r_m}, C_1, (H_k^?,H_\ell^?,S_{p_i,q_i,r_i}) \BS x \to x.
$$
The important notice here is that $B'_2$ {\em is not empty,} since $A_{j';i',{k'}^?,{\ell'}^?}$ was not the rightmost formula in $B_2$.

Decomposition (Lemma~\ref{Lm:decomposition}) and inversion of $(\to\BS)$ (Lemma~\ref{Lm:reverse}) yields
$$
\vdash H_{k'}^?, H_{\ell'}^?, S_{p_{i'},q_{i'},r_{i'}}, B'_2, \vec{K}, S_{p_m,q_m,r_m}, C_1, (H_k^?,H_\ell^?,S_{p_i,q_i,r_i}) \BS \underline{x} \to x.
$$
The sentinel $S_{p_{i'},q_{i'},r_{i'}}$, again, blocks other choices for the principal $x$. Applying Lemma~\ref{Lm:decomposition} once more:
\begin{align*}
& H_{k'}^?, H_{\ell'}^?, S_{p_{i'},q_{i'},r_{i'}}, B'_2, \vec{K}, S_{p_m,q_m,r_m}, C_1  = \Psi_1, \Psi_2, \Psi_3;\\
& \vdash \Psi_1 \to H_k^?; \\
& \vdash \Psi_2 \to H_\ell^?;\\
& \vdash \Psi_3 \to S_{p_i,q_i,r_i}.
\end{align*}

The only two formulae in the antecedents, whose tops could potentially be $p_i$, $q_i$, or $r_i$, are the sentinels $S_{p_{i'},q_{i'},r_{i'}}$ and
$S_{p_m,q_m,r_m}$. By Lemma~\ref{Lm:sentinel}, we get $m = i$ and $\Psi_3 = S_{p_m,q_m,r_m}$. 
Therefore, $C_1 = \Lambda$.

Consider several subcases:

{\em Subcase 1.1:} there are neither $H_k$, nor $H_\ell$. Then $\Psi_1 = \Psi_2 = \Lambda$, which is not the case, since $\Psi_1$ or $\Psi_2$
should include at least the other sentinel, $S_{p_{i'},q_{i'},r_{i'}}$. 

{\em Subcase 1.2:} there is only $H_k$, but not $H_\ell$, thus, $\Psi_2 = \Lambda$. Then we have 
\begin{align*}
& \Psi_1 = H_{k'}^?, H_{\ell'}^?, S_{p_{i'},q_{i'},r_{i'}}, B'_2, \vec{K};\\
& \vdash \Psi_1 \to H_k.
\end{align*} Let us check that
$B'_2, \vec{K}$ satisfies the conditions for $\Phi$ in Lemma~\ref{Lm:badcase2}. Indeed, tops of formulae in $B'_2$ are $w$ or $x$; tops of $K_i$
are $z$, and they are of the form $z \SL \Gamma$. Finally, since $B'_2$ is not empty, the first formula in our $\Phi$ is from $B'_2$, and thus does not have top $z$.
By Lemma~\ref{Lm:badcase2}, $\not\vdash H_{k'}^?, H_{\ell'}^?, S_{p_{i'},q_{i'},r_{i'}}, \Phi \to H_k$. Contradiction.

{\em Subcase 1.3:} there are both $H_k$ and $H_\ell$. Then 
$$
\Psi_1, \Psi_2 = H_{k'}^?, H_{\ell'}^?, S_{p_{i'}, q_{i'},r_{i'}}, B'_2, \vec{K}.
$$
Take the one of $\Psi_1$ and $\Psi_2$ which includes $S_{p_{i'},q_{i'},r_{i'}}$. If it also includes the first formula of $B'_2$, then 
the claim for it violates Lemma~\ref{Lm:badcase2}, exactly as in the previous subcase. Otherwise 
$\Psi_2 = B'_2, \vec{K}$, and $\Psi_2 \to H_\ell$ is not derivable by Lemma~\ref{Lm:badcase}.

\vskip 5pt
{\em Case 2.} This is the fruitful case.
$$
\vdash B_2, K_{i_1}, \ldots, K_{i_n}, S_{p_m,q_m,r_m}, C_1, (H_k^?,H_\ell^?,S_{p_i,q_i,r_i}) \BS \underline{x} \to x.
$$
Decomposition (Lemma~\ref{Lm:decomposition}) yields
\begin{align*}
& B_2, K_{i_1}, \ldots, K_{i_n}, S_{p_m,q_m,r_m}, C_1 = \Psi_1, \Psi_2, \Psi_3; \\
& \vdash \Psi_1 \to H_k^?;\\
& \vdash \Psi_2 \to H_\ell^?;\\
& \vdash \Psi_3 \to S_{p_i,q_i,r_i}.
\end{align*}
By Lemma~\ref{Lm:sentinel}, $m = i$ and $\Psi_3$ should be $S_{p_m,q_m,r_m}$. 
Thus, $C_1 = \Lambda$ and $B_2, K_{i_1}, \ldots, K_{i_n} = \Psi_1, \Psi_2$.

Let us show that $B_2$ is empty. If there are no $H_k$ and $H_\ell$, this is trivial, since in this case $\Psi_1 = \Psi_2 = \Lambda$.
Let there be $H_k$ and suppose that $B_2$ is not empty. Then the first formula of $B_2$ is the first formula of $\Psi_1$ ($\Psi_1$ is not
empty, since $\not\vdash \Lambda \to H_k$).  Recall that $H_k = z \SL (S_{p_k,q_k,r_k}, z)$ and invert $(\to\SL)$:
$$
\vdash \Psi_1, S_{p_k,q_k,r_k}, z \to z.
$$
The principal occurrence of $z$ should be in one of the $K_i$, and this formula, being of the form $z \SL \Gamma$, should be the leftmost one.
However, the leftmost formula of $\Psi_1$ is from $B_2$ and does not have top $z$. Contradiction.

Thus, $B_2 = \Lambda$, and we have $\Psi_1 = K_{i_1}, \ldots, K_{i_{n'}}$, $\Psi_2 = K_{i_{n'+1}}, \ldots, K_{i_n}$. By induction hypothesis,
$\vdash \Psi_1 \to H_k$ yields derivability of $a_{i_1} \ldots a_{i_{n'}}$ from $N_k$ and 
$\vdash \Psi_2 \to H_\ell$ yields derivability of $a_{i_{n'+1}} \ldots a_{i_n}$ from $N_\ell$ in the context-free grammar $\Gc$. Finally,
since $A_{j;m,k^?,\ell^?} \in \Uc_j$ (recall that $m=i$), in $\Gc$ we have the production rule needed to finish the derivation:
$$N_m \Rightarrow a_j N_k^? N_\ell^?.$$
\end{proof}

Theorem~\ref{Th:SafiullinE} immediately follows from Lemma~\ref{Lm:key}. The necessary $\LL(\BS,\SL)$-grammar is constructed as follows:
for each $a_j \in \Sigma$ let $a_j \rhd K_j$ (this type assignment is unique) and let $H = H_0$.

Notice that our construction also works with Lambek's restriction, thus subsuming Safiullin's original result (Theorem~\ref{Th:Safiullin}).

\section{Notes on Specific Classes of Models}\label{S:compl}

In this section we consider theories of three specific classes of residuated Kleene lattice, namely, language, regular language, and
relational models. Though, in the language with $\vee$ and $\wedge$, these theories are strictly greater than $\ACTomega$,~\citet{BuszkoRelMiCS}
proves $\Pi_1^0$-hardness for them also. We propagate Buszkowski's complexity results to the corresponding classes of residuated monoids with iteration,
getting rid of additive operations. The completeness issue, 
{\em i.e.,} the question whether these theories in the language without $\vee$ and $\wedge$ coincide with $\Lomega$, is still open.

Language models, or L-models for short, are interpretations of the Lambek calculus and its extensions on the algebra of formal
languages over an alphabet $\Sigma$ (see~\ref{S:intro}).

We also consider a more specific class of L-models, where all variables are interpreted by regular languages.
Since the class of regular languages is closed under all operations we consider, interpretations of arbitrary formulae
are also going to be regular. Following~\citet{BuszkoRelMiCS}, we call these models REGLAN-models.

The second class of interpretations are relational models (R-models). In R-models, formulae are interpreted as binary relations on
a set $W$, {\em i.e.,} subsets of $W \times W$. Operations are defined as follows:
\begin{align*}
& R \BS S = \{ \langle y,z \rangle \in W \times W \mid (\forall \langle x, y \rangle \in R) \, \langle x, z \rangle \in S \};\\
& S \SL R = \{ \langle x,y \rangle \in W \times W \mid (\forall \langle y, z \rangle \in R) \, \langle x, z \rangle \in S \};\\
& R \cdot S = R \circ S = \{ \langle x, z \rangle  \in W \times W \mid (\exists y \in W)\, \langle x,y \rangle \in R \mbox{ and }
\langle y, z \rangle \in S \}.
\end{align*}
(For simplicity, we consider only ``square'' relational models, where any pair $\langle x,y \rangle$ could belong to a relation. There
is also a broader class of ``relativised'' relational models, where all relations are subsets of a ``universal'' 
 relation $U$~\citep{AndrekaMikulas}, which is reflexive and transitive. Relativisation
alters the definition of division operations. Relativised R-models are necessary for the Lambek calculus with Lambek's restriction (\ref{S:Lrestr}), where
one drops the reflexivity condition on $U$.)

In all these classes of models additive connectives are interpreted set-theoretically, as union and intersection.

Unfortunately, no completeness results are known for $\Lomega$ w.r.t. these specific classes of models. Moreover, even adding
only one additive connective, conjunction $\wedge$, yields incompleteness w.r.t. all three classes of models~\citep{Kuzn2018AiML}. For action logic
in whole, incompleteness is connected with the distributivity principle, $(A \vee C) \wedge (B \vee C) \to (A \wedge B) \vee C$. This principle is
true in all models in which $\vee$ and $\wedge$ are interpreted set-theoretically, as union and intersection, but is not derivable in $\ACTomega$,
cf.~\citet{OnoKomori1985}. Since $A^*$ is essentially infinite disjunction ($\One \vee A \vee A^2 \vee \ldots$), incompleteness also propagates to
the fragment without explicit $\vee$. On the other hand, $\LL$ is complete w.r.t. L-models~\citep{PentusFmonov} and R-models~\citep{AndrekaMikulas}.
For REGLAN-models, completeness is an open problem, but there are some partial results which we shall use later. 

Completeness for $\Lomega$ itself is an open problem, for any of these three interpretations. Thus, the inequational theories of L-models,
R-models, and REGLAN-models, in the language with $\BS$, $\SL$, $\cdot$, and $\KStar$, could possibly be different from the set of theorems
of $\Lomega$. We denote these theories by $\ThM(\LMod)$, $\ThM(\RMod)$, and $\ThM(\REGLANMod)$, respectively.

In the bigger language of $\ACTomega$, the corresponding theories are denoted by $\ThA(\mathcal{M})$, where $\mathcal{M}$ is $\LMod$, $\RMod$, or
$\REGLANMod$. These theories are definitely different from $\ACTomega$ itself, due to the distributivity principle. However,~\citet{BuszkoRelMiCS} manages to
propagate his $\Pi_1^0$-hardness result to these these theories also:
\begin{thm}[Buszkowski]
Theories $\ThA(\LMod)$, $\ThA(\RMod)$, and $\ThA(\REGLANMod)$ are $\Pi_1^0$-hard.
\end{thm}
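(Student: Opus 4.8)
The plan is to reuse Buszkowski's reduction of the totality problem {\sc total}$^+$ and to supply, for each of the three model classes, an explicit countermodel. Given a context-free grammar without $\varepsilon$-rules, transform it by Theorem~\ref{Th:Gaifman} into an equivalent Lambek grammar $\Gc=\langle\Sigma,\rhd,H\rangle$ with $\Sigma=\{a_1,\dots,a_m\}$ and primitive goal type $H$, and form $A_i=A_{i,1}\wedge\dots\wedge A_{i,n_i}$ and $E=A_1\vee\dots\vee A_m$ exactly as in Buszkowski's construction; recall that in Gaifman's normal form every denominator occurring in a type $A_{i,j}$ is a primitive variable. By~\citep[Lm.~5]{Buszkowski2007} the grammar is total (generates every non-empty word) if and only if $E^+\to H$ is derivable in $\ACTomega$. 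Every theorem of $\ACTomega$ is valid in all $*$-continuous residuated Kleene lattices, and L-, R- and REGLAN-models are such; hence $\ACTomega\subseteq\ThA(\mathcal{M})$ for $\mathcal{M}\in\{\LMod,\RMod,\REGLANMod\}$, so totality already gives $E^+\to H\in\ThA(\mathcal{M})$. The whole content of the theorem is the converse, for which it suffices to refute $E^+\to H$ in a model of class $\mathcal{M}$ whenever the grammar is not total: this makes $E^+\to H\in\ThA(\mathcal{M})$ equivalent to totality and so reduces {\sc total}$^+$ to membership in $\ThA(\mathcal{M})$.

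For $\LMod$ the countermodel is the language model $M_\Gc$ over $\Sigma$ induced by the grammar. Say that a word $b_1\dots b_n$ \emph{reduces to} a formula $D$ if there are types $B_l$ with $b_l\rhd B_l$ and $\vdash B_1,\dots,B_n\to D$, and set $v(p)=\{\,w\in\Sigma^+\mid w\text{ reduces to }p\,\}$ for each variable $p$, extending $v$ to all formulae by the language operations of~\ref{S:intro}. Since $H$ is primitive, $v(H)$ is by definition the language generated by $\Gc$, so non-totality furnishes a word $w_0\in\Sigma^+$ with $w_0\notin v(H)$. It remains to place every letter into $v(E)$. I would prove, by induction on the Gaifman-type $D$, that every word reducing to $D$ lies in $v(D)$: the base case is the definition, and at a step $D=B\SL q$ (or $q\BS B$) with $q$ primitive one inverts $(\to\SL)$, picks $y\in v(q)$ — which, $q$ being primitive, means $y$ reduces to $q$, so $y$ may be cut in — and applies the induction hypothesis for $B$. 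Primitivity of the denominators is exactly what lets this induction proceed without any completeness input. Consequently each letter $a_i$ reduces to every $A_{i,j}$ and hence $a_i\in\bigcap_j v(A_{i,j})=v(A_i)\subseteq v(E)$; thus $\Sigma\subseteq v(E)$, $\Sigma^+\subseteq v(E^+)$, and so $w_0\in v(E^+)\setminus v(H)$, refuting $E^+\to H$ in $M_\Gc$. (The non-invertibility of $\vee$ is never used: $\vee$ occurs only positively in $E$, and we only push letters \emph{into} $v(E)$.)

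For $\RMod$ I would transport this to binary relations via the right regular representation. Take $W=\Sigma^*$ and put $v(p)=\{\,\langle u,uw\rangle\mid u\in\Sigma^*,\ w\text{ reduces to }p\,\}$, extending by the relational operations $R\BS S$, $S\SL R$, $R\circ S$ and $\bigcup_n R^n$ of~\ref{S:compl}. The assignment $L\mapsto\{\,\langle u,uw\rangle\mid w\in L\,\}$ sends concatenation to relational composition, and the populating induction of the previous paragraph has a direct relational copy, so the computation transfers verbatim: $\langle\varepsilon,w_0\rangle\in v(E^+)$ while $\langle\varepsilon,w_0\rangle\notin v(H)$, refuting $E^+\to H$ in a genuine (square) R-model. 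Soundness of the relational interpretation is the completeness backbone of Andr\'eka and Mikul\'as~\citep{AndrekaMikulas}.

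The main obstacle is $\REGLANMod$. The languages $v(p)$ above are in general context-free, not regular, so $M_\Gc$ is not a REGLAN-model; and since REGLAN-models form a proper subclass of L-models, the countermodel cannot be inherited — a genuinely regular refuting valuation is required. Such a valuation must still (i) make each letter $a_i$ lie in $v(A_{i,j})$ and (ii) keep $w_0$ out of $v(H)$, and these requirements pull in opposite directions: populating the types wants the denominators large, whereas separating $w_0$ wants $v(H)$ small, and the exact grammar model reconciles them only because it is exact. The plan is to pass to a finite residuated-monoid quotient — a syntactic/transition congruence fine enough to record both the type assignments of the letters and the single separation $w_0\notin v(H)$ — and to read regular interpretations off the corresponding finite automaton; only finitely much information is inspected in the refutation, so such a finite abstraction exists. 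Arranging one finite congruence that simultaneously preserves soundness of \emph{all} the type assignments and the failure at $w_0$ is the delicate point, and it is exactly the step performed by Buszkowski's finite-state encoding~\citep{BuszkoRelMiCS}; this is the part I expect to be the most technical.
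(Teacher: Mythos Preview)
The paper does not actually prove this theorem; it is quoted from \citet{BuszkoRelMiCS}. What the paper \emph{does} do is summarise Buszkowski's method in one line --- ``essentially based on the fact that the fragment which is really needed to prove $\Pi_1^0$-hardness of $\ACTomega$ is in fact complete w.r.t.\ all three classes of models'' --- and then carry out that method in full for the additive-free analogue $\ThM(\mathcal{M})$: isolate the $*$-external fragment, show (Lemmata~\ref{Lm:external} and~\ref{Lm:external2}, Theorem~\ref{Th:partialcompleteness}) that on this fragment derivability coincides with validity in each of the three model classes, and conclude hardness because the reduction sequent is $*$-external. The engine is the already-known completeness of $\LL(\BS,\SL)$ for each class (Buszkowski/Pentus for L-models, Andr\'eka--Mikul\'as for R-models, Buszkowski/Sorokin via cofinite interpretations for REGLAN-models), used as a black box.

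Your route is genuinely different. Rather than invoking completeness of a fragment, you manufacture an explicit countermodel in each class whenever the grammar is not total. For L-models and R-models this is correct: your syntactic model $v(p)=\{w\in\Sigma^+\mid w\text{ reduces to }p\}$ is the canonical model, and the induction you run (crucially using that Gaifman types have primitive denominators) is exactly the truth lemma underlying the standard completeness proofs. So for two of the three classes you have a sound, self-contained argument; the cost is that you are re-deriving special cases of theorems the paper simply cites.

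The gap is REGLAN. Your proposed fix --- pass to a finite monoid congruence fine enough to preserve the type assignments of the letters and the separation $w_0\notin v(H)$ --- is not the mechanism behind the known REGLAN completeness of $\LL(\BS,\SL)$, which proceeds via \emph{cofinite} interpretations rather than finite-index congruences, and it is not clear your plan can be made to work: coarsening $v(p)$ to a union of congruence classes can enlarge denominators and, since residuals are antitone in the divisor, destroy the inclusions $a_i\in v(A_{i,j})$ or push $w_0$ into $v(H)$. You then say this is ``exactly the step performed by Buszkowski's finite-state encoding,'' which is circular --- you are proving Buszkowski's theorem. The fragment-completeness approach the paper describes avoids all of this: once the reduction sequent lies in a fragment on which derivability equals REGLAN-validity, the needed regular countermodel is supplied by the off-the-shelf completeness theorem, and no bespoke construction is required.
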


Due to the semantic definition of these theories, however, the ``out-of-the-box'' upper bound appears to be quite high.
For $\ThA(\LMod)$ and $\ThA(\RMod)$ it is $\Pi_1^1$: indeed, the condition for a sequent to belong to one of these theories starts with a second-order quantifier
``for any model,'' followed by an arithmetically formulated truth condition. No better upper complexity bounds are known. For REGLAN-models,
 however, the situation is different~\citep{BuszkoRelMiCS}. Such a model is essentially a bunch of regular expressions encoding the languages which
 interpret variables (for a given sequent, the set of its variables is finite), and this bunch can be encoded by a natural number.
  This arithmetises the quantifier over all models and gives $\Pi_1^0$ as the upper bound. 
  Thus, the upper bound matches the lower one: $\ThA(\REGLANMod)$ is $\Pi_1^0$-complete.

Buszkowski's method of proving $\Pi_1^0$-hardness is essentially based on the fact that the fragment which is  really needed to prove
$\Pi_1^0$-hardness of $\ACTomega$ is in fact complete w.r.t. all three classes of models. Here we formulate this fragment explicitly,
prove its completeness and thus obtain $\Pi_1^0$-hardness for the theories without $\vee$ and $\wedge$, using our Theorem~\ref{Th:main}.

Let us call a formula {\em *-external,} if no $\cdot$ or $\KStar$ in it occurs within the scope of $\BS$ or $\SL$. More formally, the class of
*-external formulae is defined recursively as follows:
\begin{itemize}
\item any formula in the language of $\BS$ and $\SL$ is *-external;
\item if $A$ and $B$ are *-external, then so is $A \cdot B$;
\item if $A$ is *-external, then so is $A^*$.
\end{itemize} 
A sequent $A_1, \ldots, A_n \to B$ is called *-external, if all $A_i$ are *-external and $B$ is a formula in the language of $\BS$ and $\SL$.

By subformula property, all sequents in a cut-free derivation of a *-external sequent are also *-external. Thus, the notion of *-externality induces a
fragment of $\Lomega$---and we show that this fragment is simultaneously $\Pi_1^0$-hard and complete w.r.t. L-models, R-models, and REGLAN-models.
The first claim immediately follows from the form of the sequent used in the proof of Theorem~\ref{Th:main}: it is *-external (recall that
our construction of an $\LL$-grammar with unique type assignment, Theorem~\ref{Th:SafiullinE}, uses only two divisions, $\BS$ and $\SL$).

For the second claim, we present a version of Palka's *-elimination via approximation, which reduces derivability of *-external sequents
 in $\Lomega$ to derivability in $\LL(\BS,\SL)$,
without using $\vee$. For each *-external formula $A$ we define the set of its {\em instances,} $\Inst(A)$, where subformulae of the
form $B^*$ are replaced by 
concrete numbers of $B$'s (since Kleene stars could be nested, these $A$'s should also be replaced by instances, and therefore become different), 
and $\cdot$'s are replaced by metasyntactic commas (thus, instances are sequences of formulae).
The set of instances is defined by induction on the construction of a *-external formula: 
\begin{itemize}
\item if $A$ is a formula in the language of $\BS$ and $\SL$, then $\Inst(A) = \{A\}$;
\item $\Inst(A \cdot B) = \{ \Gamma, \Delta \mid \Gamma \in \Inst(A), \Delta \in \Inst(B) \}$;
\item $\Inst(A^*) = \{ \Gamma_1, \ldots, \Gamma_n \mid n \ge 0, \Gamma_i \in \Inst(A) \}$.
\end{itemize}

A typical example of the notion of the set of instances is given in the proof of Theorem~\ref{Th:main}: for $A = (A_1^+ \cdot A_2^+)^+$ the set
of instances is $$\{ A_1^{n_1}, A_2^{m_2}, A_1^{n_2}, A_2^{m_2}, \ldots, A_1^{n_k}, A_2^{m_k} \mid k, n_1, m_1, n_2, m_2, \ldots, n_k, m_k \ge 1 \}.$$

\begin{lem}\label{Lm:external}
A *-external sequent $A_1, \ldots, A_n \to B$ is derivable in $\Lomega$ if and only if $\Pi_1, \ldots, \Pi_n \to B$ is derivable in $\LL(\BS,\SL)$ for
any $\Pi_1 \in \Inst(A_1)$, \ldots, $\Pi_n \in \Inst(A_n)$, i.e., all its instances are derivable.
\end{lem}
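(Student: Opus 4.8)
The plan is to prove the equivalence directly, without passing through Palka's $\vee$-approximations (one could instead invoke Palka's approximation theorem together with invertibility of the left rules for $\vee$, $\cdot$, $\One$, but the direct argument below never mentions $\vee$). Two auxiliary facts will be used repeatedly. First, for a *-external $A$ every $\Pi\in\Inst(A)$ is a sequence of $\BS,\SL$-formulae only: being *-external, $A$ never hides a $\cdot$ or a $\KStar$ under a division, so the implicational leaves of an instance stay purely implicational; hence each $\Pi_1,\dots,\Pi_n\to B$ is a genuine $\LL(\BS,\SL)$-sequent. Second, by cut elimination for $\ACTomega$ and the subformula property, a sequent built solely from $\BS,\SL$-formulae is derivable in $\Lomega$ if and only if it is derivable in $\LL(\BS,\SL)$.

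For the ``only if'' direction I would first prove, by induction on the *-external formula $A$, that $\Pi\to A$ is derivable in $\Lomega$ for every $\Pi\in\Inst(A)$: the base case is the axiom $A\to A$; for $A=C\cdot D$ one joins $\Gamma\to C$ and $\Delta\to D$ by $(\to\cdot)$; for $A=C^*$, with $\Pi$ the concatenation of $m$ instances of $C$, one joins the pieces by $(\to\KStar)$. Then, given a $\Lomega$-derivation of $A_1,\dots,A_n\to B$, $n$ applications of $(\CUT)$ against the sequents $\Pi_i\to A_i$ yield a $\Lomega$-derivation of $\Pi_1,\dots,\Pi_n\to B$, which, being a $\BS,\SL$-only sequent, is derivable in $\LL(\BS,\SL)$ by the conservativity remark.

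The ``if'' direction is the substantive one; the idea is to manufacture the required (possibly infinite, but well-founded) $\Lomega$-derivation of $A_1,\dots,A_n\to B$ from the bottom up. As long as some antecedent formula is *-external but not purely implicational, its outermost connective is $\cdot$ or $\KStar$, so I apply $(\cdot\to)$ (replacing $C\cdot D$ by $C,D$) or $(\KStar\to)_\omega$ (for each $m\ge 0$, replacing $C^*$ by $m$ copies of $C$). Tracking the descendants of each $A_i$ one sees that the antecedent always has the form $\Gamma_1,\dots,\Gamma_n$ with $\Gamma_i$ ranging, as the construction branches, exactly over $\Inst(A_i)$; so every leaf, reached once all antecedent formulae are purely implicational, is a sequent $\Pi_1,\dots,\Pi_n\to B$ with $\Pi_i\in\Inst(A_i)$, which by hypothesis has an $\LL(\BS,\SL)$-derivation, and I graft that finite derivation onto the leaf. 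What turns the resulting tree into a legitimate $\Lomega$-derivation is well-foundedness, and this is the step I expect to take the most care: counting occurrences of $\KStar$ is no good, since unfolding $C^*$ into copies of $C$ can increase the total number of stars when $C$ itself contains one. Instead I would weight each sequent by the finite multiset of the sizes of its antecedent formulae; both $(\cdot\to)$ and $(\KStar\to)_\omega$ as used here replace a single formula by finitely many strictly smaller ones (as $\mathrm{size}(C\cdot D)>\mathrm{size}(C),\mathrm{size}(D)$ and $\mathrm{size}(C^*)>\mathrm{size}(C)$), so this multiset strictly decreases along every edge of the tree in the well-founded multiset ordering; hence no branch is infinite, the grafted tree is a genuine $\Lomega$-derivation, and $A_1,\dots,A_n\to B$ is derivable in $\Lomega$.
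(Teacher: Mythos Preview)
Your proof is correct and follows essentially the same route as the paper's: your ``only if'' direction via cut against $\Pi_i\to A_i$ is precisely what the paper means by ``inversion'' of $(\cdot\to)$ and $(\KStar\to)_\omega$ (and the auxiliary induction you run to obtain $\Pi\to A$ is stated separately in the paper as the lemma immediately following this one), while your ``if'' direction is exactly the paper's ``apply $(\cdot\to)$ and $(\KStar\to)_\omega$.'' Your explicit multiset-of-sizes argument for well-foundedness of the bottom-up construction is a useful addition that the paper leaves implicit.
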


\begin{proof}
Essentially the same as Lemma~\ref{Lm:externalsimple}: the ``if'' part goes by applying $(\cdot\to)$ and $(\KStar\to)_\omega$;
the ``only if'' one goes by their inversion.
\end{proof}

\begin{lem}\label{Lm:external2}
For any *-external formula $A$ and any $\Pi \in \Inst(A)$ the sequent $\Pi \to A$ is derivable in $\Lomega$.
\end{lem}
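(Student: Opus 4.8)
The plan is to prove this by a routine induction on the construction of the \emph{*-external} formula $A$, following exactly the three clauses in the recursive definition of $\Inst(A)$. In the base case, $A$ is a formula in the language of $\BS$ and $\SL$, so $\Inst(A) = \{A\}$ and the only instance is $\Pi = A$; then $\Pi \to A$ is the axiom $A \to A$ (derivable by induction on $A$ if one insists on atomic axioms only).

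For the inductive step I would split into the two remaining cases. If $A = A_1 \cdot A_2$, any $\Pi \in \Inst(A)$ decomposes as $\Pi = \Gamma, \Delta$ with $\Gamma \in \Inst(A_1)$ and $\Delta \in \Inst(A_2)$; the induction hypothesis gives $\vdash \Gamma \to A_1$ and $\vdash \Delta \to A_2$ in $\Lomega$, and a single application of $(\to\cdot)$ yields $\Gamma, \Delta \to A_1 \cdot A_2$, i.e.\ $\Pi \to A$. If $A = A_1^*$, any $\Pi \in \Inst(A)$ has the shape $\Pi = \Gamma_1, \ldots, \Gamma_n$ with $n \ge 0$ and each $\Gamma_i \in \Inst(A_1)$; the induction hypothesis gives $\vdash \Gamma_i \to A_1$ for all $i$, and one application of the right Kleene-star rule $(\to\KStar)$, taken with its $n$ premises, produces $\Gamma_1, \ldots, \Gamma_n \to A_1^*$, the case $n = 0$ giving $\Lambda \to A_1^*$.

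I do not expect any genuine obstacle here: this lemma is just the ``soundness'' half of the $\Inst$ construction, and the right rules $(\to\cdot)$ and $(\to\KStar)$ were designed precisely to make it go through (compare the ``if'' direction of Lemma~\ref{Lm:externalsimple} and Lemma~\ref{Lm:external}). The only point worth flagging is that $(\to\KStar)$ is formulated for every $n \ge 0$, so it already covers the empty instance $\Lambda \in \Inst(A_1^*)$ without a separate argument.
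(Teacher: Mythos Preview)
Your proposal is correct and matches the paper's proof essentially line for line: the paper also proceeds by induction on the structure of $A$, handling the three clauses (base formula in $\BS,\SL$; product via $(\to\cdot)$; Kleene star via $(\to\KStar)_n$) in exactly the way you describe. Your explicit remark on the $n=0$ case is a harmless elaboration not spelled out in the paper.
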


\begin{proof}
Induction on the structure of $A$. If $A$ is in the language of $\BS$ and $\SL$, then $\Pi = A$ and $\Pi \to A$ is an axiom.
If $A = B \cdot C$, then $\Pi = \Gamma, \Delta$, where $\Gamma \in \Inst(B)$ and $\Delta \in \Inst(C)$. By induction hypothesis,
$\Gamma \to B$ and $\Delta \to C$ are derivable; then $\Pi \to A$ is derivable by application of $(\to\cdot)$.
If $A = B^*$, then $\Pi = \Pi_1, \ldots, \Pi_n$, where $\Pi_i \in \Inst(B)$. By induction hypothesis, all sequents $\Pi_i \to B$ are derivable,
and thus $\Pi \to A$ is derivable by application of $(\to\KStar)_n$.
\end{proof}

Now we are ready to prove completeness.
\begin{thm}\label{Th:partialcompleteness}
A *-external sequent is derivable in $\Lomega$ if and only if it is true in
all L-models, or all REGLAN-models, or all R-models.
\end{thm}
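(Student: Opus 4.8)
The plan is to establish the four-way equivalence, for a *-external sequent, between derivability in $\Lomega$ and truth in all L-models, all REGLAN-models, and all R-models. One implication is soundness. Each of the three algebras involved --- the language algebra $\mathcal{P}(\Sigma^*)$, its subalgebra of regular languages, and the algebra of binary relations on a set --- is a *-continuous residuated Kleene lattice in the signature $\BS,\SL,\cdot,\KStar$: in all three cases the Kleene star is computed as the set-theoretic union $a^*=\bigcup_{n\ge 0}a^n$, and this union distributes over concatenation and over relational composition, so in particular the $\omega$-rule $(\KStar\to)_\omega$ preserves truth (using *-continuity together with the footnoted remark that the context around $A^*$ may be reinstated). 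A routine induction on the well-founded derivation then yields soundness of $\Lomega$, and hence of its *-external fragment, with respect to all three classes.

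For the converse direction it suffices to refute, in a model of each class, any *-external sequent $A_1,\dots,A_n\to B$ that is \emph{not} derivable in $\Lomega$. First I would invoke Lemma~\ref{Lm:external}: non-derivability delivers instances $\Pi_i\in\Inst(A_i)$ such that $\Pi_1,\dots,\Pi_n\to B$ is not derivable in $\LL(\BS,\SL)$. This is a genuine product-free sequent --- every formula occurring in any $\Pi_i$ is a maximal $\{\BS,\SL\}$-subformula of $A_i$ by *-externality, and $B$ is product-free by definition of a *-external sequent. Next I would apply completeness of the product-free Lambek calculus with respect to the class at hand: for L-models this is Pentus~\citep{PentusFmonov}, for R-models it is Andr\'eka and Mikul\'as~\citep{AndrekaMikulas}, and for REGLAN-models it is the partial completeness result for $\LL(\BS,\SL)$ referred to in this section; all three apply to product-free sequents since $\LL(\BS,\SL)$ is a conservative fragment of $\LL$. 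In each case we obtain a model $\mathcal{M}$ of the required class, with valuation $v$, in which $v(\Pi_1)\cdots v(\Pi_n)\not\subseteq v(B)$.

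It then remains to transfer this failure, in the \emph{same} model $\mathcal{M}$, from the instance to the original sequent. By Lemma~\ref{Lm:external2} every sequent $\Pi_i\to A_i$ is derivable in $\Lomega$, hence true in $\mathcal{M}$ by the soundness part, so $v(\Pi_i)\subseteq v(A_i)$. Since $\cdot$ is monotone --- which, as observed in the introduction, follows from the residuation laws and therefore holds in every model of $\Lomega$ --- we obtain $v(A_1)\cdots v(A_n)\supseteq v(\Pi_1)\cdots v(\Pi_n)\not\subseteq v(B)$, so $A_1,\dots,A_n\to B$ fails in $\mathcal{M}$. Running this for $\mathcal{M}$ an L-model, a REGLAN-model, and an R-model closes the cycle of implications.

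Soundness and the monotonicity transfer are straightforward; the substance is imported wholesale from the three completeness theorems for the product-free Lambek calculus. The one genuinely delicate point is the REGLAN case: one needs the counter-model for the non-derivable product-free instance to consist of \emph{regular} languages, so that it qualifies as a REGLAN-model --- this is precisely where the partial REGLAN-completeness of $\LL(\BS,\SL)$ is used, and it is the main obstacle. (Incidentally, since every REGLAN-model is an L-model, ``true in all L-models'' already implies ``true in all REGLAN-models'', so the L-model implication may alternatively be read off from the REGLAN argument rather than proved separately.)
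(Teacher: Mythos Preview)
Your proposal is correct and follows essentially the same route as the paper: both arguments combine Lemma~\ref{Lm:external}, Lemma~\ref{Lm:external2}, and the known completeness results for $\LL(\BS,\SL)$ with respect to L-models, REGLAN-models (via cofinite-language completeness), and R-models. The only difference is that you argue by contraposition---producing a counter-model for a non-derivable sequent and then lifting the failure from the instance to the original via monotonicity---whereas the paper argues directly, pushing truth down from the original sequent to an arbitrary instance and then invoking completeness; the two are the same proof read in opposite directions.
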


\begin{proof}
The ``only if'' part follows from the general soundness theorem of $\Lomega$ w.r.t. arbitrary RKLs.

For the ``if'' part, we first recall completeness results for $\LL(\BS,\SL)$:
\begin{itemize}
\item for L-models, completeness of $\LL$ was proved by~\citet{PentusFmonov}; here we can actually use a simpler result
by~\citet{Buszkowski1982compat} for the product-free fragment;
\item for REGLAN-models, completeness follows from the fact that $\LL(\BS,\SL)$ is complete even w.r.t. a narrower class of L-models,
in which variables are interpreted by cofinite languages~\citep{Buszkowski1982decision,Sorokin2012};
\item for R-models, completeness was proved by~\citet{AndrekaMikulas}.
\end{itemize}

Now let a *-external sequent $A_1, \ldots, A_n \to B$ be true in all models of one of the classes: $\LMod$, $\REGLANMod$, or $\RMod$. Let
$\Pi_1 \in \Inst(A_1)$, \ldots, $\Pi_n \in \Inst(A_n)$. By Lemma~\ref{Lm:external2}, $\Pi_i \to A_i$ are derivable in $\Lomega$, and by soundness they
are true in all models of the specified class. Thus, $\Pi_1, \ldots, \Pi_n \to B$ is also generally true. This is a sequent in the language of $\BS$ and $\SL$,
and by completeness results mentioned above it is derivable in $\LL(\BS,\SL)$. Now, since instances $\Pi_1$, \ldots, $\Pi_n$ were taken arbitrarily,
by Lemma~\ref{Lm:external} we conclude that the original sequent $A_1, \ldots, A_n \to B$ is derivable in $\Lomega$.
\end{proof}

Now we can prove the $\Pi_1^0$-hardness results.
\begin{thm}
Theories $\ThM(\LMod)$, $\ThM(\REGLANMod)$, and \\ $\ThM(\RMod)$ are $\Pi_1^0$-hard.
\end{thm}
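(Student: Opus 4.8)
The plan is to reduce the $\Pi_1^0$-hard problem {\sc alt}$_2$ to derivability in each of these theories, reusing the machinery already assembled. Recall from the proof of Theorem~\ref{Th:main} that a context-free grammar lies in {\sc alt}$_2$ if and only if the $\ACTomega$-sequent $(A_1^+ \cdot A_2^+)^+ \to H$ is derivable, where $A_1, A_2, H$ come from the $\LL(\BS,\SL)$-grammar with unique type assignment produced by Theorem~\ref{Th:SafiullinE}. The crucial observation is that this sequent is \emph{*-external}: $H$ is a formula in the language of $\BS, \SL$, and $(A_1^+ \cdot A_2^+)^+$ is built from such formulae using only $\cdot$ and $\KStar$ (writing $A_i^+$ as $A_i \cdot A_i^*$). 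So the entire reduction already lands inside the *-external fragment.

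First I would invoke Theorem~\ref{Th:partialcompleteness}: since the sequent $(A_1^+ \cdot A_2^+)^+ \to H$ is *-external, it is derivable in $\Lomega$ if and only if it is true in all L-models, if and only if it is true in all REGLAN-models, if and only if it is true in all R-models. In other words, membership of this sequent in $\ThM(\LMod)$, in $\ThM(\REGLANMod)$, and in $\ThM(\RMod)$ are all equivalent to its derivability in $\Lomega$, hence (by Lemma~\ref{Lm:externalsimple} and the analysis in Section~\ref{S:complex}) equivalent to the original grammar lying in {\sc alt}$_2$. Composing the computable map (context-free grammar) $\mapsto$ ($\LL(\BS,\SL)$-grammar via Theorem~\ref{Th:SafiullinE}) $\mapsto$ (the sequent $(A_1^+ \cdot A_2^+)^+ \to H$) gives an m-reduction of {\sc alt}$_2$ to each of the three theories. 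Since {\sc alt}$_2$ is $\Pi_1^0$-hard, so is each of $\ThM(\LMod)$, $\ThM(\REGLANMod)$, and $\ThM(\RMod)$.

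There is essentially no hard part here — the entire weight of the argument has been moved into Theorem~\ref{Th:partialcompleteness} and Theorem~\ref{Th:SafiullinE}. The only point deserving care is the bookkeeping check that the sequent used in the reduction really is *-external and that the translation remains computable; both are immediate from the explicit form of the construction. One might add, for completeness of the record, that for $\ThM(\REGLANMod)$ the matching $\Pi_1^0$ \emph{upper} bound also holds (by the arithmetisation of REGLAN-models noted above together with conservativity over $\Lomega$), so that theory is in fact $\Pi_1^0$-complete, whereas for $\ThM(\LMod)$ and $\ThM(\RMod)$ only the lower bound is claimed, the known upper bound being $\Pi_1^1$ and the exact complexity open.

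\begin{proof}
By Theorem~\ref{Th:main} and the reduction preceding it, a context-free grammar over $\{a_1,a_2\}$ belongs to {\sc alt}$_2$ if and only if the sequent $(A_1^+ \cdot A_2^+)^+ \to H$ is derivable in $\Lomega$, where $A_1, A_2$ and $H$ are obtained, via Theorem~\ref{Th:SafiullinE}, from an $\LL(\BS,\SL)$-grammar with unique type assignment equivalent to the given context-free grammar; this whole transformation is computable. The formulae $A_1, A_2, H$ lie in the language of $\BS$ and $\SL$, and $(A_1^+ \cdot A_2^+)^+ = \bigl((A_1 \cdot A_1^*) \cdot (A_2 \cdot A_2^*)\bigr)^*\cdot (A_1 \cdot A_1^*)\cdot (A_2 \cdot A_2^*)$ is built from them using only $\cdot$ and $\KStar$; hence the sequent $(A_1^+ \cdot A_2^+)^+ \to H$ is *-external. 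By Theorem~\ref{Th:partialcompleteness}, it is derivable in $\Lomega$ if and only if it is true in all L-models, if and only if it is true in all REGLAN-models, if and only if it is true in all R-models; that is, it belongs to $\ThM(\LMod)$, to $\ThM(\REGLANMod)$, and to $\ThM(\RMod)$ exactly when the original grammar is in {\sc alt}$_2$. Thus the map sending a context-free grammar to this sequent is an m-reduction of {\sc alt}$_2$ to each of the three theories, and since {\sc alt}$_2$ is $\Pi_1^0$-hard, so are $\ThM(\LMod)$, $\ThM(\REGLANMod)$, and $\ThM(\RMod)$. For REGLAN-models the arithmetisation of models discussed above yields a matching $\Pi_1^0$ upper bound, so $\ThM(\REGLANMod)$ is in fact $\Pi_1^0$-complete.
\end{proof}
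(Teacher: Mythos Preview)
Your proof is correct and follows essentially the same approach as the paper: you observe that the reduction sequent $(A_1^+ \cdot A_2^+)^+ \to H$ is *-external, invoke Theorem~\ref{Th:partialcompleteness} to conclude that its membership in each of the three semantic theories coincides with its derivability in $\Lomega$, and then transfer $\Pi_1^0$-hardness from {\sc alt}$_2$. The paper states this more tersely (``the corresponding fragments of these theories are equal to the one of $\Lomega$, and the latter is $\Pi_1^0$-hard''), but the content is the same; your added remark on the $\Pi_1^0$ upper bound for REGLAN-models also matches the paper's subsequent discussion, though note that this upper bound follows purely from the arithmetisation of REGLAN-models and does not require any conservativity argument.
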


\begin{proof}
If we consider only *-external sequents, the corresponding fragments of these theories are equal to the one of $\Lomega$. 
As noticed above, the latter is $\Pi_1^0$-hard.
\end{proof}

Like the system with additives, $\ThM(\REGLANMod)$ also enjoys a $\Pi_1^0$ upper complexity bound, and thus is $\Pi_1^0$-complete. For two other theories, 
$\ThM(\LMod)$ and $\ThM(\RMod)$, the best known upper bound is only $\Pi_1^1$.

\section{Conclusion and Future Work}\label{S:conclusion}

We have proved $\Pi_1^0$-hardness for $\Lomega$, the Lambek calculus with Kleene star, and the corresponding inequational theories
of the algebras of languages, regular languages, and binary relations. These results strengthen results by Buszkowski
for the corresponding systems extended with additive connectives, $\vee$ and $\wedge$.
The crucial component of our proof is the construction of an $\LL(\BS,\SL)$-grammar with unique type assignment for a context-free grammar without the
empty word.

Let us briefly survey the questions which are still open in this area.

First,  the completeness issue of $\Lomega$ w.r.t. L-models, REGLAN-models, and
R-models is still open. 

Second, it is interesting to characterise the class of languages that can be generated by $\Lomega$-grammars---in 
particular, whether such a grammar could generate a $\Pi_1^0$-hard language.
Notice that the complexity of the calculus in whole could be greater than that of concrete languages generated by grammars
based on this calculus. For example, the original Lambek calculus is NP-complete~\citep{Pentus2006}, while all languages generated
by Lambek grammars are context-free~\citep{PentusCF}, and therefore decidable in polynomial time. 

Third, we do not yet know whether any 
context-free language without the empty word can be generated by a Lambek grammar with unique type assignments with only one division.
(If one drops the uniqueness condition, then already Gaifman's construction yields such a grammar.) A positive answer to this question would
yield $\Pi_1^0$-hardness for the fragment of $\Lomega$ in the language of $\cdot$, $\SL$, and $\KStar$. 

Fourth, there is a small question on $\LL$-grammars
with unique type assignment. Grammars constructed in this article are not capable of generating the empty word (because context-free
grammars in Greibach form cannot generate it), whereas in general $\LL$ allows empty
antecedents and therefore $\LL$-grammars could potentially generate grammars with the empty word. Moreover, in the case without uniqueness condition there
exists a construction that transforms context-free grammars with the empty word into $\LL$-grammars~\citep{Kuznetsov2012IGPL}.
The question whether all context-free languages with the empty word can be generated by $\LL$-grammars with unique type assignment is still open.

Finally, as noticed by one of the referees, in our $\Pi_1^0$-hardness proof the product ($\cdot$) operation is used only once,
in $(A_1^+ \cdot A_2^+)^+ \to H$, while $A_1$, $A_2$, and $H$ are product-free. The question is whether it is possible to get rid of
the product completely and prove $\Pi_1^0$-hardness for $\ACTomega(\BS,\SL,\KStar)$. We conjecture that this could be done by
the ``pseudo-double-negation'' trick~\citep{Buszkowski2007,KanKuzSce2019WoLLICpspace}. Namely, $(A_1^+ \cdot A_2^+)^+ \to H$ is probably
equiderivable with $\bigl( b \SL ((b \SL A_2^+) \SL A_1^+) \bigr)^+ \to b \SL (b \SL H)$, for a fresh variable $b$. Establishing this
equiderivability (and, thus, $\Pi_1^0$-hardness of the product-free fragment of $\Lomega$) is left for future research.

\subsection*{Acknowledgments}
The author is grateful to the organisers and participants of WoLLIC 2017 and MIAN--POMI 2018 Winter Session on Mathematical Logic for
fruitful discussions and friendly atmosphere. The author is also indebted to the anonymous referees for valuable comments and suggestions.
Being a Young Russian Mathematics award winner, the author would like to thank its sponsors and jury for this high honour.

\subsection*{Funding}
The work is supported by the Russian Science Foundation under grant 16-11-10252.

\vskip 10pt
\bibliographystyle{rsl}
\bibliography{Lstar_Pi1}

\begin{thebibliography}{}

\bibitem[\protect\citeauthoryear{Abramsky, \& Tzevelekos}{Abramsky \&
  Tzevelekos}{2010}]{AbramskyTzevelekos2010}
Abramsky, S., \& Tzevelekos, N. (2010).
\newblock Introduction to categories and categorical logic.
\newblock In Coecke, B., editor, {\em New Structures for Physics}, Volume 813
  of {\em Lecture Notes in Physics}, pp.\  3--94. Springer.

\bibitem[\protect\citeauthoryear{Ajdukiewicz}{Ajdukiewicz}{1935}]{Ajdukiewicz}
Ajdukiewicz, K. (1935).
\newblock {Die syntaktische Konnexit\"{a}t}.
\newblock {\em Studia Philosophica\/}~{\bf 1}, 1--27.

\bibitem[\protect\citeauthoryear{Andr\'{e}ka, \& Mikul\'{a}s}{Andr\'{e}ka \&
  Mikul\'{a}s}{1994}]{AndrekaMikulas}
Andr\'{e}ka, H., \& Mikul\'{a}s, S. (1994).
\newblock Lambek calculus and its relational semantics: completeness and
  incompleteness.
\newblock {\em Journal of Logic, Language, and Information\/}~{\bf 3\/}(1),
  1--37.

\bibitem[\protect\citeauthoryear{Bar-Hillel}{Bar-Hillel}{1953}]{BarHillel}
Bar-Hillel, Y. (1953).
\newblock {A quasi-arithmetical notation for syntactic description}.
\newblock {\em Language\/}~{\bf 29}, 47--58.

\bibitem[\protect\citeauthoryear{Bar-Hillel, Gaifman, \& Shamir}{Bar-Hillel
  et~al.}{1960}]{BGS1960}
Bar-Hillel, Y., Gaifman, C., \& Shamir, E. (1960).
\newblock On the categorial and phrase-structure grammars.
\newblock {\em Bulletin of the Research Council of Israel\/}~{\bf 9F}, 1--16.

\bibitem[\protect\citeauthoryear{Buszkowski}{Buszkowski}{1982a}]{Buszkowski1982compat}
Buszkowski, W. (1982a).
\newblock Compatibility of a categorial grammar with an associated category
  system.
\newblock {\em Zeitschrift f\"{u}r mathematische Logik und Grundlagen der
  Mathematik\/}~{\bf 28}, 229--237.

\bibitem[\protect\citeauthoryear{Buszkowski}{Buszkowski}{1982b}]{Buszkowski1982decision}
Buszkowski, W. (1982b).
\newblock Some decision problems in the theory of syntactic categories.
\newblock {\em Zeitschrift f\"{u}r mathematische Logik und Grundlagen der
  Mathematik\/}~{\bf 28}, 539--548.

\bibitem[\protect\citeauthoryear{Buszkowski}{Buszkowski}{1985}]{Buszkowski1985equivalence}
Buszkowski, W. (1985).
\newblock The equivalence of unidirectional lambek categorial grammars and
  context-free grammars.
\newblock {\em Zeitschrift f\"{u}r mathematische Logik und Grundlagen der
  Mathematik\/}~{\bf 31}, 369--384.

\bibitem[\protect\citeauthoryear{Buszkowski}{Buszkowski}{2006}]{BuszkoRelMiCS}
Buszkowski, W. (2006).
\newblock On the complexity of the equational theory of relational action
  algebras.
\newblock In {\em RelMiCS 2006: Relations and Kleene Algebra in Computer
  Science}, Volume 4136 of {\em Lecture Notes in Computer Science}, pp.\
  106--119. Springer.

\bibitem[\protect\citeauthoryear{Buszkowski}{Buszkowski}{2007}]{Buszkowski2007}
Buszkowski, W. (2007).
\newblock On action logic: equational theories of action algebras.
\newblock {\em Journal of Logic and Computation\/}~{\bf 17\/}(1), 199--217.

\bibitem[\protect\citeauthoryear{Buszkowski, \& Palka}{Buszkowski \&
  Palka}{2008}]{BuszkowskiPalka2008}
Buszkowski, W., \& Palka, E. (2008).
\newblock Infinitary action logic: complexity, models and grammars.
\newblock {\em Studia Logica\/}~{\bf 89\/}(1), 1--18.

\bibitem[\protect\citeauthoryear{Carpenter}{Carpenter}{1998}]{Carpenter}
Carpenter, B. (1998).
\newblock {\em Type-logical semantics}.
\newblock MIT Press.

\bibitem[\protect\citeauthoryear{Du, \& Ko}{Du \& Ko}{2001}]{DuKo}
Du, D.-Z., \& Ko, K.-I. (2001).
\newblock {\em Problem solving in automata, languages, and complexity}.
\newblock New York: John Wiley \& Sons.

\bibitem[\protect\citeauthoryear{Galatos, Jipsen, Kowalski, \& Ono}{Galatos
  et~al.}{2007}]{GalatosRLbook}
Galatos, N., Jipsen, P., Kowalski, T., \& Ono, H. (2007).
\newblock {\em Residuated Lattices: An Algebraic Glimpse at Substructural
  Logics}, Volume 151 of {\em Studies in Logic and the Foundations of
  Mathematics}.
\newblock Elsevier.

\bibitem[\protect\citeauthoryear{Girard}{Girard}{1987}]{Girard1987}
Girard, J.-Y. (1987).
\newblock Linear logic.
\newblock {\em Theoretical Computer Science\/}~{\bf 50\/}(1), 1--102.

\bibitem[\protect\citeauthoryear{Greibach}{Greibach}{1965}]{Greibach1965}
Greibach, S.~A. (1965).
\newblock A new normal-form theorem for context-free phrase structure grammars.
\newblock {\em Journal of the ACM\/}~{\bf 12\/}(1), 42--52.

\bibitem[\protect\citeauthoryear{Jipsen, \& Tsinakis}{Jipsen \&
  Tsinakis}{2002}]{JipsenSurvey}
Jipsen, P., \& Tsinakis, C. (2002).
\newblock A survey of residuated lattices.
\newblock In Martinez, J., editor, {\em Ordered Algebraic Structures}, Volume~7
  of {\em Developments in Mathematics}, pp.\  19--56. Springer.

\bibitem[\protect\citeauthoryear{Kanovich, Kuznetsov, \& Scedrov}{Kanovich
  et~al.}{2019}]{KanKuzSce2019WoLLICpspace}
Kanovich, M., Kuznetsov, S., \& Scedrov, A. (2019).
\newblock The complexity of multiplicative-additive {L}ambek calculus: 25 years
  later.
\newblock In Iemhoff, R., Moortgat, M., \& de~Queiroz, R., editors, {\em WoLLIC
  2019: Logic, Language, Information, and Computation}, Volume 11541 of {\em
  Lecture Notes in Computer Science}, pp.\  356--372. Springer.

\bibitem[\protect\citeauthoryear{Kleene}{Kleene}{1956}]{Kleene1956}
Kleene, S.~C. (1956).
\newblock Representation of events in nerve nets and finite automata.
\newblock In {\em Automata Studies}, pp.\  3--41. Princeton University Press.

\bibitem[\protect\citeauthoryear{Kozen}{Kozen}{1994a}]{Kozen1994IC}
Kozen, D. (1994a).
\newblock A completeness theorem for {K}leene algebras and the algebra of
  regular events.
\newblock {\em Information and Computation\/}~{\bf 110\/}(2), 366--390.

\bibitem[\protect\citeauthoryear{Kozen}{Kozen}{1994b}]{Kozen1994}
Kozen, D. (1994b).
\newblock On action algebras.
\newblock In van Eijck, J. \& Visser, A., editors, {\em Logic and Information
  Flow}, pp.\  78--88. MIT Press.

\bibitem[\protect\citeauthoryear{Krull}{Krull}{1924}]{Krull1924}
Krull, W. (1924).
\newblock {Axiomatische Begr\"{u}ndung der algemeinen Idealtheorie}.
\newblock {\em Sitzungsberichte der physikalischmedizinischen Societ\"{a}t zu
  Erlangen\/}~{\bf 56}, 47--63.

\bibitem[\protect\citeauthoryear{Kuznetsov}{Kuznetsov}{2012}]{Kuznetsov2012IGPL}
Kuznetsov, S. (2012).
\newblock {L}ambek grammars with one division and one primitive type.
\newblock {\em Logic Journal of the IGPL\/}~{\bf 20\/}(1), 207--221.

\bibitem[\protect\citeauthoryear{Kuznetsov}{Kuznetsov}{2017}]{Kuznetsov2017WoLLIC}
Kuznetsov, S. (2017).
\newblock The {L}ambek calculus with iteration: two variants.
\newblock In Kennedy, J. \& de~Queiroz, R., editors, {\em WoLLIC 2017: Logic,
  Language, Information, and Computation}, Volume 10388 of {\em Lecture Notes
  in Computer Science}, pp.\  182--198. Springer.

\bibitem[\protect\citeauthoryear{Kuznetsov}{Kuznetsov}{2018}]{Kuzn2018AiML}
Kuznetsov, S. (2018).
\newblock *-continuity vs. induction: divide and conquer.
\newblock In {\em Proceedings of AiML '18}, Volume~12 of {\em Advances in Modal
  Logic}, pp.\  493--510. College Publications.

\bibitem[\protect\citeauthoryear{Lambek}{Lambek}{1958}]{Lambek1958}
Lambek, J. (1958).
\newblock The mathematics of sentence structure.
\newblock {\em American Mathematical Monthly\/}~{\bf 65}, 154--170.

\bibitem[\protect\citeauthoryear{Lambek}{Lambek}{1961}]{Lambek1961}
Lambek, J. (1961).
\newblock On the calculus of syntactic types.
\newblock In Jakobson, R., editor, {\em Structure of Language and Its
  Mathematical Aspects}, pp.\  166--178. AMS.

\bibitem[\protect\citeauthoryear{Lambek}{Lambek}{1969}]{Lambek1969}
Lambek, J. (1969).
\newblock Deductive systems and categories {II}: standard constructions and
  closed categories.
\newblock In Hilton, P., editor, {\em Category Theory, Homology Theory, and
  Their Applications I}, Volume~86 of {\em Lecture Notes in Mathematics}, pp.\
  76--122. Springer.

\bibitem[\protect\citeauthoryear{Moot, \& Retor{\'e}}{Moot \&
  Retor{\'e}}{2012}]{MootRetore}
Moot, R., \& Retor{\'e}, C. (2012).
\newblock {\em {The logic of categorial grammars: a deductive account of
  natural language syntax and semantics}}, Volume 6850 of {\em Lecture Notes in
  Computer Science}.
\newblock Springer.

\bibitem[\protect\citeauthoryear{Morrill}{Morrill}{2011}]{Morrill2011}
Morrill, G. (2011).
\newblock {\em {Categorial grammar: logical syntax, semantics, and
  processing}}.
\newblock Oxford University Press.

\bibitem[\protect\citeauthoryear{Ono}{Ono}{1993}]{Ono1993}
Ono, H. (1993).
\newblock Semantics for substructural logics.
\newblock In Schroeder-Heister, P. \& Do\v{s}en, K., editors, {\em
  Substructural Logics}, Volume~2 of {\em Studies in Logic and Computation},
  pp.\  259--291. Oxford: Clarendon Press.

\bibitem[\protect\citeauthoryear{Ono, \& Komori}{Ono \&
  Komori}{1985}]{OnoKomori1985}
Ono, H., \& Komori, Y. (1985).
\newblock Logics without contraction rule.
\newblock {\em Journal of Symbolic Logic\/}~{\bf 50\/}(1), 169--201.

\bibitem[\protect\citeauthoryear{Palka}{Palka}{2007}]{Palka2007}
Palka, E. (2007).
\newblock An infinitary sequent system for the equational theory of
  *-continuous action lattices.
\newblock {\em Fundamenta Informaticae\/}~{\bf 78\/}(2), 295--309.

\bibitem[\protect\citeauthoryear{Pentus}{Pentus}{1993}]{PentusCF}
Pentus, M. (1993).
\newblock {Lambek grammars are context-free}.
\newblock In {\em Proceedings of LICS '93}, pp.\  429--433. IEEE.

\bibitem[\protect\citeauthoryear{Pentus}{Pentus}{1994}]{Pentus1994}
Pentus, M. (1994).
\newblock The conjoinability relation in {L}ambek calculus and linear logic.
\newblock {\em Journal of Logic, Language, and Information\/}~{\bf 3\/}(2),
  121--140.

\bibitem[\protect\citeauthoryear{Pentus}{Pentus}{1998}]{PentusFmonov}
Pentus, M. (1998).
\newblock {Free monoid completeness of the Lambek calculus allowing empty
  premises}.
\newblock In {\em Proceedings of Logic Colloquium '96}, Volume~12 of {\em
  Lecture Notes in Logic}, pp.\  171--209. Springer.

\bibitem[\protect\citeauthoryear{Pentus}{Pentus}{2006}]{Pentus2006}
Pentus, M. (2006).
\newblock {L}ambek calculus is {NP}-complete.
\newblock {\em Theoretical Computer Science\/}~{\bf 357\/}(1), 186--201.

\bibitem[\protect\citeauthoryear{Pratt}{Pratt}{1991}]{Pratt1991}
Pratt, V. (1991).
\newblock Action logic and pure induction.
\newblock In {\em JELIA 1990: Logics in AI}, Volume 478 of {\em Lecture Notes
  in Artificial Intelligence}, pp.\  97--120. Springer.

\bibitem[\protect\citeauthoryear{Restall}{Restall}{2000}]{Restall2000}
Restall, G. (2000).
\newblock {\em An introduction to substructural logics}.
\newblock Routledge.

\bibitem[\protect\citeauthoryear{Safiullin}{Safiullin}{2007}]{Safiullin2007}
Safiullin, A.~N. (2007).
\newblock Derivability of admissible rules with simple premises in the {L}ambek
  calculus.
\newblock {\em Moscow University Mathematics Bulletin\/}~{\bf 62\/}(4),
  168--171.

\bibitem[\protect\citeauthoryear{Sipser}{Sipser}{2012}]{Sipser}
Sipser, M. (2012).
\newblock {\em Introduction to the theory of computation\/} (3 ed.).
\newblock Cengage Learning.

\bibitem[\protect\citeauthoryear{Sorokin}{Sorokin}{2012}]{Sorokin2012}
Sorokin, A. (2012).
\newblock On the completeness of the {L}ambek calculus with respect to cofinite
  language models.
\newblock In {\em LACL 2012: Logical Aspects of Computational Linguistics},
  Volume 7351 of {\em Lecture Notes in Computer Science}, pp.\  229--233.
  Springer.

\bibitem[\protect\citeauthoryear{Ward, \& Dilworth}{Ward \&
  Dilworth}{1939}]{WardDilworth1939}
Ward, M., \& Dilworth, R.~P. (1939).
\newblock Residuated lattices.
\newblock {\em Transactions of the AMS\/}~{\bf 45\/}(3), 335--354.

\end{thebibliography}
\vspace*{10pt}

\address{STEKLOV MATHEMATICAL INSTITUTE\\
\hspace*{9pt}RUSSIAN ACADEMY OF SCIENCES\\
\hspace*{18pt}8 GUBKINA ST., 119991 MOSCOW, RUSSIA\\
{\it E-mail:} sk@mi-ras.ru}
\end{document}